\numberwithin{equation}{section}
\newcommand{\be}{\begin{eqnarray}}
\newcommand{\ee}{\end{eqnarray}}
\newcommand{\ce}{\begin{eqnarray*}}
\newcommand{\de}{\end{eqnarray*}}
\newtheorem{theorem}{Theorem}[section]
\newtheorem{lemma}[theorem]{Lemma}
\newtheorem{remark}[theorem]{Remark}
\newtheorem{definition}[theorem]{Definition}
\newtheorem{proposition}[theorem]{Proposition}
\newtheorem{Examples}[theorem]{Example}
\newtheorem{corollary}[theorem]{Corollary}
\def\Ee{\mathbb{E}}
\def\Pp{\mathbb{P}}
\def\om{\omega}
\def\[{{\Big[}}
\def\]{{\Big]}}
\def\<{{\langle}}
\def\>{{\rangle}}
\def\({{\Big(}}
\def\){{\Big)}}
\def\bx{{\mathbf{x}}}
\def\min{{\mathord{{\rm min}}}}
\def\={&\!\!=\!\!&}
\def\cE{{\mathcal E}}
\def\cF{{\mathcal F}}
\def\bP{{\mathbf P}}
\def\bE{{\mathbf E}}
\def\1{{\mathbf{1}}}
\def\geq{\geqslant}
\def\leq{\leqslant}
\def\ge{\geqslant}
\def\le{\leqslant}
\def\om{\omega}
\def\[{{\Big[}}
\def\]{{\Big]}}
\def\<{{\langle}}
\def\>{{\rangle}}
\def\({{\Big(}}
\def\){{\Big)}}
\def\bx{{\mathbf{x}}}
\def\I{\mathbf 1}
\def\min{{\mathord{{\rm min}}}}
\def\={&\!\!=\!\!&}
\def\bt{\begin{theorem}}
\def\et{\end{theorem}}
\def\bl{\begin{lemma}}
\def\el{\end{lemma}}
\def\br{\begin{remark}}
\def\er{\end{remark}}
\def\bx{\begin{Examples}}
\def\ex{\end{Examples}}
\def\bd{\begin{definition}}
\def\ed{\end{definition}}
\def\bp{\begin{proposition}}
\def\ep{\end{proposition}}
\def\bc{\begin{corollary}}
\def\ec{\end{corollary}}
\def\geq{\geqslant}
\def\leq{\leqslant}
\def\ge{\geqslant}
\def\le{\leqslant}
\def\bP{{\mathbf P}}
 \def\R{\mathbb R}
 \def\R{\mathbb R}    
\def\N{\mathbb N}  
\def\<{\langle} \def\>{\rangle}
\definecolor{darkergreen}{rgb}{0.0, 0.5, 0.0}
\begin{document}

\allowdisplaybreaks
\title[Heat kernel estimates for Brox's diffusion]
{\bfseries Quenched and annealed heat kernel estimates for Brox's diffusion
}

\author{Xin Chen\qquad Jian Wang}

\date{}
\thanks{\emph{X.\ Chen:}
   School of Mathematical Sciences, Shanghai Jiao Tong University, 200240 Shanghai, P.R. China. \texttt{chenxin217@sjtu.edu.cn}}
     \thanks{\emph{J.\ Wang:}
    School of Mathematics and Statistics \& Key Laboratory of Analytical Mathematics and Applications (Ministry of Education) \& Fujian Provincial Key Laboratory
of Statistics and Artificial Intelligence, Fujian Normal University, 350007 Fuzhou, P.R. China. \texttt{jianwang@fjnu.edu.cn}}
\maketitle
\begin{abstract}
Brox's diffusion is a typical one-dimensional singular diffusion, which was introduced by Brox (1986) as a continuous analogue of Sinai's random walk. In this paper, we
will establish quenched heat kernel estimates for short time and annealed heat kernel estimates for large time of Brox's diffusion. The proofs are based on Brox's construction via the scale-transformation and the time-change arguments as well as the theory of resistance forms for
symmetric strongly recurrent Markov processes. We emphasize that, since the reference measure of Brox's diffusion does not satisfy the so-called volume doubling conditions neither for the small scale nor the large scale,
the
existing
methods for heat kernel estimates of diffusions in ergodic media do not work, and new
techniques will be introduced to establish both quenched and annealed heat kernel estimates of Brox's diffusions, which
take into account different oscillation properties for one-dimensional Brownian motion in random environments.

\medskip

\noindent\textbf{Keywords:} Brox's diffusion; heat kernel estimate; scale-transformation;  time-change; resistance form

\medskip

\noindent \textbf{MSC 2010:} 60G51; 60G52; 60J25; 60J75.
\end{abstract}

\section{Introduction and main results}
\subsection{Background}
Let $\Omega:=C(\R;\R)$ be the space of real-valued continuous functions defined on $\R$ and vanishing at the origin, and let $\omega$ denote an element in $\Omega$. Let $P$ be the Wiener measure on $\Omega$; namely, the probability measure on $\Omega$ such that $\{\{\omega(x)\}_{x\ge0}, P\}$ and $\{\{\omega(x)\}_{x\le 0}, P\}$ are independent one-dimensional standard Brownian motions. Given a sample function $W\in \Omega$ (that is, $W(x,\omega)=\omega(x)$ for all $x\in \R$), we consider the following one-dimensional stochastic differential equation (SDE)
\begin{equation}\label{e0}
dX(t)=-\frac{1}{2}\dot{W}(X(t))\,dt+d\beta(t),
\end{equation}
where $\{\beta(t)\}_{t\ge0}$ is a one-dimensional standard Brownian motion that is independent of $\{W(x)\}_{x\in \R}$, and $\dot{W}(x)$ denotes the formal derivative of $W(x)$. (Note that, since $W$ is not differentiable, the SDE \eqref{e0}
 can not be solved in the classical sense.) The SDE \eqref{e0} was first introduced in \cite{Brox} by Brox  as a continuous analogue of Sinai's random walk \cite{Si},
 one of whose motivations is
 to study the interaction  between $\{\beta(t)\}_{t\ge0}$ and $\{W(x)\}_{x\in \R}$
in the large scale of time and space.
 For a fixed sample  $\omega\in \Omega$, let $\bP_\omega^x$ be the probability measure on $C([0,\infty))$ induced by the Brox diffusion $X:=\{X(t)\}_{t\ge0}$ with the starting point $X(0)=x\in \R$ and the
 fixed environments
 $\{W(x,\omega)\}_{x\in \R}=\{\omega(x)\}_{x\in \R}$. That is, $\bP_\omega^x$ denotes the quenched probability of the Brox diffusion $X$ when the randomness induced by the random potential $\{W(x,\omega)\}_{x\in \R}=\{\omega(x)\}_{x\in \R}$ is fixed. While $\{W(x)\}_{x\in \R}$ is random, the process $\{X(t)\}_{t\ge0}$ also is defined on the probability space
$(\Omega\times C([0,\infty)), \Pp^{x})$, where $\Pp^{x}:=P(d\omega)\bP_\omega^{x}$ represents the annealed  probability for the Brox diffusion $X$ induced by the
environments
$\{W(x)\}_{x\in \R}$.

As mentioned before, due to the singularity of the drift $\dot{W}(x)$, the SDE \eqref{e0} above can not be solved by the standard theory for
neither the strong solution nor the weak solution of SDEs. Actually,
 the argument
 in Brox \cite[Section 1]{Brox}
 is based on the time and space transformations as in the It\^{o}-McKean construction of Feller-diffusion process.
In details, the Brox diffusion $X$ can be viewed as a Feller-diffusion process on $\R$ with the generator of Feller's canonical form
$$\frac{e^{ W(x)}}{2}\frac{d}{d x}\left( {e^{-W(x)}}\frac{d}{d x}\right).$$
Through
the It\^{o}-McKean construction of Feller-diffusion process, which applies the scale-transformation and the time-change
to a Brownian motion,
the Brox diffusion $X$ can be explicitly given by
\begin{equation}\label{brox}
X(t)=S^{-1}(B(T^{-1}(t))),\quad t\ge0
\end{equation} with
$$T(t)=\int_0^t\exp(-2W(S^{-1}(B(s))))\,ds,\quad S(x)=\int_0^x e^{W(z)}\,dz,$$ where $B:=\{B(t)\}_{t\ge 0}$ is  a one-dimensional standard Brownian motion starting from the origin on some probability space. As we will see, Brox's construction is crucial for the
arguments in our paper. Later,
through the expression of Brownian local time,
it was proved in \cite[Theorems 2.4 and 2.5]{HLM} that for any Brownian
motion $B$, independent of $W:=\{W(x)\}_{x\in \R}$, the representation \eqref{brox}
is a weak solution to the
SDE \eqref{e0}; and that for any given Brownian motion $\{\beta(t)\}_{t\ge0}$ we
can find a special Brownian motion
$B,$ independent of $W$, such that the representation \eqref{brox}
is a unique strong solution
to the
SDE \eqref{e0}. We also want to remark that another possible way of solving \eqref{e0} rigorously is
based on the
paracontrolled theory aimed for singular stochastic partial differential equations (SPDEs), which was firstly introduced by Gubinelli, Imkeller and Perkowski
\cite{GIP}. Such
paracontrolled theory has also been efficiently applied to different types of
SDEs
with singular coefficients beyond the Young regime, see e.g. Cannizzaro and Chouk \cite{CC}, Kremp and Perkowski \cite{KP}, Zhang, Zhu and Zhu \cite{ZZZ}.
Roughly speaking, to solve \eqref{e0} through the paracontrolled theory, some extra techniques are required to tackle the growth property of $\{\dot{W}(x)\}_{x\in \R}$.

With aid of the representation \eqref{brox}, Schumacher and Brox proved, independently in  \cite{Sch}  and \cite{Brox}, that for large $t$
the average value of
$X(t)$ under the annealed  probability $\Pp^0$
is much smaller than $t^{1/2}$, the magnitude order of a standard Brownian motion in non-random environment.
In fact, the average value of $X(t)$
in the annealed setting
is of order $(\log t)^2$, which is surprisingly slow.
Therefore, the Brox diffusion $\{X(t)\}_{t\ge 0}$ describes a Brownian motion moving in a random medium, and it will  possess anomalous behaviors. After the work of \cite{Brox,Sch} there have been a number of papers devoted to the study of the
Brox diffusion.
Tanaka \cite{Tan1, Tan2} studied different localization behaviors for the Brox diffusion.
Hu and Shi \cite{HS} established law of the iterated logarithm for the Brox diffusion, as well as
the moderate
deviation in \cite{HS3}. The scaling limit from Sinai's random walk
to the Brox diffusion was proved by Seignourel \cite{Se}. Via the coupling method and the rough path theory,
recently a rate for such convergence has been given by Geng, Gradinaru and Tindel \cite{GGT}.
We also refer the reader to
\cite{Che, HS2, HSM, M, S} about various properties of the Brox diffusion.

\subsection{Main results}
The purpose of this paper is to establish heat kernel estimates for the Brox diffusion
$\{X(t)\}_{t\ge 0}$,
including
both the small time and large time.
It is easy to see that the Brox diffusion $\{X(t)\}_{t\ge 0}$ is a $\mu_X$-symmetric diffusion on $\R$, where $\mu_X(dx)=\exp (-W(x)))\,dx$.
Since $\{W(x)\}_{x\in \R}$ is locally bounded, it then follows from the standard theory of Dirichlet forms (see e.g. \cite{FOT11}) that there exists
a heat kernel (i.e., a transition density function) of the Brox diffusion $\{X(t)\}_{t\ge 0}$ with respect to the reference measure $\mu_X$, which is denoted by
$p^X(t,x,y)$ or $p^X(t,x,y,\omega)$ in this paper.

First, we have the following quenched estimates of the heat kernel $p^X(t,x,y,\omega)$.
\begin{theorem}\label{t1-1} For any $\alpha\in (0,1/2)$, there exist positive
constants $C_i$, $i=1,\cdots,4$, such that the following estimates hold.
\begin{itemize}
\item[{\rm(i)}] There are positive random variables $C_5(\omega)$ and $C_6(\omega)$ such that for every
$x,y\in \R$, $t\in (0,1]$ and almost all $\omega \in \Omega$,
\begin{align*}
p^X(t,x,y,\omega)\ge & C_1t^{-1/2}\exp\left(-\frac{C_2|x-y|^2}{t}\right)e^{W(y,\om)} \exp\left(-C_5(\omega) t^2[\log(2+|x|+|y|)]^{{2}/{\alpha}} \right)
\end{align*} and
\begin{align*}
p^X(t,x,y,\omega) \le & C_3t^{-1/2}\exp\left(-\frac{C_4|x-y|^2}{t}\right)e^{W(y,\om)}\exp\left(C_6(\omega) t^3[\log(2+|x|+|y|)]^{ {3}/{\alpha}} \right).
\end{align*}
\item[{\rm (ii)}] There are a positive random variable $R_0(\om)$ and positive
constants $C_7$, $C_8$ such that for every
$x,y\in \R$ with $\max\{|x|,|y|\}>R_0(\om)$, $t\in (0,1]$ and almost all $\omega \in \Omega$,
\begin{align*}
p^X(t,x,y,\omega)\ge & C_1t^{-1/2}\exp\left(-\frac{C_2|x-y|^2}{t}\right)e^{W(y,\om)} \exp\left(-C_7 t^2[\log(2+|x|+|y|)]^{{2}/{\alpha}} \right)
\end{align*} and
\begin{align*}
p^X(t,x,y,\omega)\le C_3t^{-1/2}\exp\left(-\frac{C_4|x-y|^2}{t}\right)e^{W(y,\om)}\exp\left(C_8t^3[\log(2+|x|+|y|)]^{{3}/{\alpha}} \right).
\end{align*}
\end{itemize}
\end{theorem}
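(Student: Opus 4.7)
The strategy is to leverage Brox's time-change representation $X(t)=S^{-1}(B(T^{-1}(t)))$ and reduce the heat kernel problem to Gaussian estimates for the driving Brownian motion $B$, while carefully accounting for the distortion introduced by the scale function $S$ and the random clock $T$. Setting $Y(t):=S(X(t))=B(T^{-1}(t))$ and using the change of variables $a=S(y)$ (with $da=e^{W(y)}dy$), one obtains the identity
\begin{equation*}
p^X(t,x,y,\omega) = e^{W(y,\omega)}\,\widetilde q(t, S(x), S(y);\omega),
\end{equation*}
where $\widetilde q(t,\cdot,\cdot;\omega)$ is the Lebesgue density of $Y(t)$ starting from $S(x)$. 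This identity already accounts for the universal factor $e^{W(y,\omega)}$ appearing in all four bounds of the theorem.

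The core analytical step is to establish two-sided Aronson-type bounds for $\widetilde q$ that, after multiplication by $e^{W(y,\omega)}$, match $C t^{-1/2}\exp(-C|x-y|^2/t)$ in the original variables. Heuristically, within a tube of radius $O(\sqrt t)$ around $x,y$ the clock satisfies $T^{-1}(t)\approx t\,e^{2W(x)}$ and the scale satisfies $S(x)-S(y)\approx e^{W(x)}(x-y)$, so $(S(x)-S(y))^2/T^{-1}(t)\approx (x-y)^2/t$, collapsing the Brownian heat kernel at time $T^{-1}(t)$ to $C t^{-1/2}e^{-W(x)}\exp(-C|x-y|^2/t)$. To make this rigorous I would employ the resistance-form framework for symmetric strongly recurrent Markov processes mentioned in the abstract: on any interval $I$ with $\|W\|_{\infty,I}\le M$ the Dirichlet form $\frac12\int (f')^2 e^{-W}\,dx$ is comparable to the Brownian Dirichlet form up to constants $e^{\pm M}$, which yields local Aronson-type bounds, and a chaining/Harnack-chain argument then propagates them to global off-diagonal bounds.

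The correction factor $\exp(\pm C t^k[\log(2+|x|+|y|)]^{k/\alpha})$ with $k=2,3$ arises from quantifying the error between the local heuristic and the true behavior of $W$ along the diffusion's path. The key inputs are almost-sure environmental bounds: Lévy's modulus of continuity gives $|W(z_1)-W(z_2)|\le C(\omega)|z_1-z_2|^\alpha$ on bounded sets for every $\alpha\in(0,1/2)$, and the law of the iterated logarithm gives an asymptotic envelope on $\sup_{|z|\le R}|W(z,\omega)|$. Substituting these into the distortion between $(S(x)-S(y))^2/T^{-1}(t)$ and $|x-y|^2/t$ generates the $[\log(2+|x|+|y|)]^{k/\alpha}$ factors; the random constants $C_5(\omega),C_6(\omega)$ in Part~(i) absorb all finite-scale randomness. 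In Part~(ii), once $\max\{|x|,|y|\}>R_0(\omega)$ the LIL asymptotics make these environmental bounds deterministic, producing the universal constants $C_7,C_8$.

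The main technical obstacle lies in the off-diagonal regime $|x-y|\gg\sqrt t$. Because $\mu_X=e^{-W}dx$ satisfies no volume-doubling condition at either small or large scales, the standard Nash/Davies machinery does not globalize; one must iterate the local estimates through a chaining argument adapted to the resistance-form setting, partitioning the segment between $x$ and $y$ into windows on which $W$ has controlled oscillation and ensuring that the accumulated correction remains genuinely subleading compared to the Gaussian $|x-y|^2/t$. Producing the exact powers $t^2, t^3$ paired with $(\log)^{2/\alpha},(\log)^{3/\alpha}$, and verifying the finiteness of $R_0(\omega)$ almost surely, are the most delicate technical points.
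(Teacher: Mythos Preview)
Your proposal has the right skeleton and largely matches the paper's approach: the identity $p^X(t,x,y)=p^Y(t,S(x),S(y))$ (your formula with $\widetilde q$ is off by one factor of $e^{W(y)}$, since $p^X$ is already a density with respect to $\mu_X=e^{-W}\,dx$), resistance-form on-diagonal bounds for $p^Y$ via volume estimates, H\"older-norm control of $W$ to quantify the distortion, and a chaining argument for the off-diagonal \emph{lower} bound. The paper also controls the growth of the local H\"older seminorm not through the LIL but through Fernique's theorem plus Borel--Cantelli, yielding $\Upsilon(R,c;\omega)\lesssim\sqrt{\log R}$ for $R>R_0(\omega)$; this is what produces both the random constants in part~(i) and the deterministic ones in part~(ii).

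The genuine gap is the off-diagonal \emph{upper} bound. Chaining and Harnack-chain arguments deliver lower bounds; they do not by themselves give an exponentially decaying upper bound in $|x-y|^2/t$, and you correctly note that Davies' method is unavailable here. The paper closes this via a completely different mechanism that your proposal does not anticipate: it first establishes an exit-probability bound $\bP^x(\tau^X_{B(x,R)}\le t)\le\exp\bigl(-cR^2/t+c\,t^{1/2}R^{1/2}\Upsilon^{3/(2\alpha)}\bigr)$ by partitioning $[x,x+R]$ into $N\asymp\max\{R^2/t,\,R\,\Upsilon^{1/\alpha}\}$ sub-intervals and invoking a combinatorial lemma of Barlow--Bass on sums of conditionally sub-exponential waiting times. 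It then splits $p^X(t,x,y)$ at the midpoint $(x+y)/2$ via the strong Markov property, so that each half is bounded by (exit probability)$\times$(on-diagonal supremum), and finally absorbs the cross term $t^{1/2}|x-y|^{1/2}\Upsilon^{3/(2\alpha)}$ into $\varepsilon|x-y|^2/t+C(\varepsilon)t^3\Upsilon^{6/\alpha}$ by Young's inequality. This last step is precisely where the exponent $t^3[\log]^{3/\alpha}$ arises, and the analogous count $N$ in the chaining is where $t^2[\log]^{2/\alpha}$ arises in the lower bound---neither comes from a direct comparison of $(S(x)-S(y))^2/T^{-1}(t)$ with $|x-y|^2/t$ as you suggest.
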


As a consequence of Theorem \ref{t1-1}, we have the statement as follows.

\begin{corollary}\label{c1-1} The following quenched estimates of $p^X(t,0,x,\omega)$ hold.
\begin{itemize}
\item[{\rm(i)}] There exist   positive random variables $C_i(\omega)$, $9\le i\le 12$, such that
 for every
$x\in \R$, $t\in (0,1]$ and almost all $\om \in \Omega$,
$$
C_9(\omega)t^{-1/2}\exp\left(-\frac{C_{10}(\omega)|x|^2}{t}\right)\le
p^X(t,x,0,\omega)
\le C_{11}(\omega)t^{-1/2}\exp\left(-\frac{C_{12}(\omega)|x|^2}{t}\right).
$$
\item[{\rm(ii)}] There are a positive random variable $R_0(\om)$ and
positive constants $C_{i}$, $13\le i\le 16$, such that for every
$x\in \R$ with $|x|>R_0(\om)$, $t\in (0,1]$ and almost all $\om \in \Omega$,
$$
C_{13}t^{-1/2}\exp\left(-\frac{C_{14}|x|^2}{t}\right)\le
p^X(t,x,0,\omega)
\le C_{15}t^{-1/2}\exp\left(-\frac{C_{16}|x|^2}{t}\right).
$$\end{itemize}
\end{corollary}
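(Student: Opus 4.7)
The plan is to derive Corollary \ref{c1-1} directly from Theorem \ref{t1-1} by setting $y=0$ and absorbing the sub-Gaussian correction factors into the Gaussian exponent. Since every $\omega\in\Omega$ satisfies $\omega(0)=0$, we have $W(0,\omega)=0$, and hence the prefactor $e^{W(y,\omega)}$ in Theorem \ref{t1-1} equals $1$ when evaluated at $y=0$. What remains is to show that, for $t\in(0,1]$, a correction factor of the form $\exp(\pm C_\star\, t^{k}[\log(2+|x|)]^{k/\alpha})$ with $k\in\{2,3\}$ can be absorbed into $\exp(-C|x|^2/t)$ at the cost of adjusting the Gaussian constant and, for part (i), picking up a multiplicative factor depending on $\omega$.

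The key elementary observation is that, since $[\log(2+|x|)]^{k/\alpha}=o(|x|^2)$ as $|x|\to\infty$, for every $\eps>0$ there exists a finite $A(\eps)$ such that $[\log(2+|x|)]^{k/\alpha}\le A(\eps)+\eps\,|x|^2$ for all $x\in\R$. Combined with $t\in(0,1]$, which gives both $t^k\le 1$ and $|x|^2\le |x|^2/t$, this yields $C_\star\, t^k[\log(2+|x|)]^{k/\alpha}\le C_\star A(\eps)+C_\star \eps\, |x|^2/t$ for any positive constant $C_\star$.

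For part (i), I would apply this to Theorem \ref{t1-1}(i) with $y=0$, taking $C_\star=C_5(\omega)$ in the lower bound and $C_\star=C_6(\omega)$ in the upper bound. In the first case, choosing $\eps=1/C_5(\omega)$ upgrades the Gaussian exponent from $C_2$ to $C_{10}(\omega):=C_2+1$ and pays the multiplicative price $C_9(\omega):=C_1\exp(-C_5(\omega)A(1/C_5(\omega)))$. In the second case, choosing $\eps=C_4/(2C_6(\omega))$ weakens the Gaussian exponent from $C_4$ to $C_{12}(\omega):=C_4/2$ and multiplies the prefactor by $\exp(C_6(\omega)A(C_4/(2C_6(\omega))))$, yielding $C_{11}(\omega)$. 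This gives the desired two-sided bounds in part (i).

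For part (ii), the very same absorption argument applied to Theorem \ref{t1-1}(ii) produces Corollary \ref{c1-1}(ii); the crucial difference is that here the constants $C_7,C_8$ appearing in the correction are \emph{universal}, so the analogous choices of $\eps$ depend only on $C_4,C_7,C_8$, and the resulting constants $C_{13},\ldots,C_{16}$ are universal as required, while the random threshold $R_0(\omega)$ is inherited directly from Theorem \ref{t1-1}(ii). The entire proof is a routine algebraic consequence of Theorem \ref{t1-1} together with the sub-polynomial growth of $[\log(2+|x|)]^{k/\alpha}$; no significant obstacle arises. The only point requiring attention is the bookkeeping of which constants may or may not depend on $\omega$, which is handled uniformly through the elementary inequality above.
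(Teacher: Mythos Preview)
Your proposal is correct and follows essentially the same route as the paper. The paper's own proof is a one-line observation that for every $c_1>0$ there is $c_2>0$ with $\frac{|x|^2}{t}\ge -c_2+c_1\max\{t^2[\log(2+|x|)]^{2/\alpha},\,t^3[\log(2+|x|)]^{3/\alpha}\}$ for all $x\in\R$, $t\in(0,1]$, which is exactly your absorption inequality in slightly different packaging; your more explicit bookkeeping of which constants depend on $\omega$ is fine and matches the intended application of Theorem \ref{t1-1}(i) versus (ii).
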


\ \
Second, let $p(t,x,y)$ be the heat kernel of the Brox diffusion $X$ with respect to the Lebesgue measure; that is, for any $x,y \in \R$ and $t>0$,
\begin{align}\label{heat}p(t,x,y)=p^X(t,x,y)e^{-W(y)}.\end{align}
The following result is devoted to annealed estimates of $p(t,x,x)$ for large time.

\begin{theorem}\label{thm22} For any $\alpha\in (0,1/2)$, there exists
constants  $T_0,C_0\ge1$ such that for every $x\in \R$ and $t\ge T_0$,
$$\frac{1}{C_0(\log t)^2(\log\log t)^{11}}\le \Ee\left[p(t,x,x)\right]\leq \frac{C_0(\log\log t)^{4+1/(2\alpha)}}{(\log t)^2}.$$\end{theorem}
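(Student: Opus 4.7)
The plan is to reduce $\Ee[p(t,0,0)]$ to a quantitative analysis of the on-diagonal heat kernel of a time-changed Brownian motion, and then split the annealed expectation according to a typical/atypical dichotomy for the Brownian environment on the scale $(\log t)^2$. By translation invariance of the Brownian increments one may take $x=0$, and since $W(0)=0$ the identity \eqref{heat} gives $p(t,0,0)=p^X(t,0,0)$. Setting $Y(t):=B(T^{-1}(t))$ as in \eqref{brox}, a direct change of variables shows $p^X(t,0,0)=q^Y(t,0,0)$, where $q^Y$ is the on-diagonal heat kernel of $Y$ with respect to $\mu_Y(dy)=e^{-2W(S^{-1}(y))}\,dy$. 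The Dirichlet form of $Y$ is $\tfrac12\int (f')^2\,dy$, so its resistance metric on $\R$ coincides (up to constants) with the Euclidean one and the general on-diagonal resistance-form theorem yields
\[
q^Y(t,0,0)\asymp\frac{1}{V_Y(r(t,\omega))},\qquad V_Y(r):=\mu_Y([-r,r])=\int_{S^{-1}(-r)}^{S^{-1}(r)} e^{-W(x)}\,dx,
\]
with $r(t,\omega)$ the unique solution of $r\,V_Y(r)=t$. Thus the proof of Theorem~\ref{thm22} reduces to estimating the $P$-distribution of the random variable $V_Y(r(t,\omega))$.

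In a typical environment, Laplace's method combined with the $\sqrt L$-order of Brownian extrema on $[0,L]$ gives $\int_0^L e^{\pm W}\asymp e^{\sqrt L}$, forcing $S^{-1}(\pm r)\asymp\pm(\log r)^2$, $V_Y(r)\asymp r$, $r(t)\asymp\sqrt t$ and hence $q^Y(t,0,0)\asymp 1/\sqrt t$; this typical quenched value is already far smaller than $1/(\log t)^2$, so the annealed expectation is dominated by the atypical regime in which $V_Y(r(t,\omega))$ is anomalously small. For the upper bound I would decompose $\Omega$ dyadically along the levels $\{V_Y(r(t))\asymp 2^{-k} t/(\log t)^2\}$, estimate the density on each level by the resistance-form bound, and control the corresponding probability by Brownian small-ball / Borell--TIS estimates for $\int e^{-W}$ being exponentially small on intervals of length $(\log t)^2$. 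The short-time quenched bound of Theorem~\ref{t1-1} applied at spatial scale $|x|+|y|\sim(\log t)^2$---where the error factor $\exp(C\,t^3[\log(2+|x|+|y|)]^{3/\alpha})$ is of order $(\log\log t)^{1/(2\alpha)}$---handles the boundary of the decomposition, and summing produces the upper bound $C(\log\log t)^{4+1/(2\alpha)}/(\log t)^2$.

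For the lower bound I would construct an explicit favourable event $\mathcal G_t\subset\Omega$ on which $0$ sits at the bottom of a Sinai-type valley of depth $\log t+O(\log\log t)$ and horizontal extent $(\log t)^2$: the path $W$ is constrained (a) to lie in an $O(1)$-tube around the origin on a small window of $0$, guaranteeing that the local equilibrium mass at $0$ is of order $1$, (b) to stay below $\tfrac12\log t$ on the valley interior of width $(\log t)^2$, and (c) to exceed $\log t+c\log\log t$ on two barriers flanking the valley. On $\mathcal G_t$ a standard Arrhenius/large-deviation argument shows that the Brox diffusion remains in the valley up to time $t$ with overwhelming quenched probability, and combining this with the short-time quenched lower bound of Theorem~\ref{t1-1} (applied after a scaling that reduces the valley to unit depth) yields $p(t,0,0)\ge c/(\log\log t)^{c'}$ pointwise on $\mathcal G_t$. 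Independence of $W$ on disjoint intervals together with small-ball estimates at the three relevant scales $O(1)$, $O((\log\log t)^c)$ and $O((\log t)^2)$ give $P(\mathcal G_t)\ge c/((\log t)^2(\log\log t)^{c''})$ with $c'+c''\le 11$, producing the stated lower bound.

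The principal obstacle, flagged in the abstract, is the absence of volume doubling for $\mu_X$: the ball-volumes $V_X(x,r)=\int_{x-r}^{x+r}e^{-W}\,dx$ have lognormal-type tails at every scale, blocking the usual Sturm / Grigor'yan--Saloff-Coste heat-kernel machinery. The one-dimensional resistance-form structure replaces volume doubling at the level of pointwise quenched asymptotics, but matching the upper and lower $(\log\log t)$ exponents forces a delicate \emph{simultaneous} control of Brownian extrema and small-ball probabilities on the scales $O(1)$, $O((\log\log t)^c)$ and $O((\log t)^2)$; this multi-scale bookkeeping, rather than any single ingredient, is where the real work of the proof lies.
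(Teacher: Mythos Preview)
Your overall strategy---reduce to the on-diagonal kernel of the time-changed process $Y$, then split the annealed expectation into typical/atypical environment regimes governed by the behaviour of $W$ on scale $(\log t)^2$---matches the paper's, but there are two genuine gaps.

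First, the claimed comparability $q^Y(t,0,0)\asymp 1/V_Y(r(t,\omega))$ is precisely what fails without volume doubling. The resistance-form upper bound (Lemma~\ref{upper}) does give $q^Y(t,0,0)\le 2/V_Y(R_+(t,\omega))$, but the lower bound (Lemma~\ref{lower}) has the form $q^Y(t,0,0)\ge c\,V_Y(R)^2/V_Y(2R)^3$, and the ratio $V_Y(2R)/V_Y(R)$ is unbounded here. You acknowledge that VD fails in your final paragraph, yet the sentence ``resistance-form structure replaces volume doubling at the level of pointwise quenched asymptotics'' is not correct: the whole point of the paper's lower-bound argument is to build an explicit event $\Omega_t$ (via hitting times and occupation measures of $W$) on which \emph{both} $V_Y(R(t))$ is bounded below \emph{and} $V_Y(2R(t))$ is bounded above by powers of $\log\log t$; only then does Lemma~\ref{lower} produce $p(t,0,0)\ge c/(\log\log t)^{11}$. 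Your alternative route via ``Theorem~\ref{t1-1} applied after a scaling that reduces the valley to unit depth'' is not available, since Theorem~\ref{t1-1} is restricted to $t\in(0,1]$ and Brox's diffusion has no scaling that trades environment depth against time while leaving the law of $W$ fixed.

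Second, your account of the $(\log\log t)^{1/(2\alpha)}$ factor in the upper bound is wrong. Plugging large $t$ and $|x|+|y|\sim(\log t)^2$ into the error factor $\exp\bigl(Ct^3[\log(2+|x|+|y|)]^{3/\alpha}\bigr)$ of Theorem~\ref{t1-1} gives something enormous, not $(\log\log t)^{1/(2\alpha)}$. In the paper this factor arises quite differently (Lemma~\ref{small}): one uses the \emph{small-time} on-diagonal bound $p^X(s,0,0)\le c\,s^{-1/2}e^{c_0 s^{\alpha/2}\Xi}$ at $s=K^{-2/\alpha}$, together with monotonicity of $t\mapsto p^X(t,0,0)$ and Fernique tail bounds on the H\"older constant $\Xi$, to obtain $\sup_{t\ge 1}p^X(t,0,0)\le c\,2^{k/\alpha}$ on $\{\Xi\in(2^k,2^{k+1}]\}$; summing over $k$ against any event of probability $\le C(\log\log t)^{\theta_2}/(\log t)^{\theta_1}$ costs exactly an extra $(\log\log t)^{1/(2\alpha)}$. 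The paper then combines this lemma with a decomposition of $\Omega$ into events $\Lambda_t^i\cap\tilde\Lambda_t^j$ defined through \emph{hitting times} $H_z$ of $W$ (not dyadic levels of $V_Y(r(t))$): on the dominant pieces $p^X(t,0,0)$ is bounded pointwise via Lemma~\ref{upper}, while the remaining pieces have probability $O((\log\log t)^4/(\log t)^2)$ and are handled by Lemma~\ref{small}.
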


\begin{remark}\rm We make some comments on the results above.
\begin{itemize}
\item [{\rm (i)}] Recently, heat kernel estimates for diffusion processes in ergodic random environments have been investigated in
\cite{ADS1,ADS2,B,BCh,BCK,BBHK,Bo,CHK1,CHK}. In particular, these estimates enjoy quite different forms in comparison with the main results of our paper for the Brox diffusion. The reasons are as follows. In the present setting, several
regular conditions,
especially the volume doubling conditions with respect to the reference measure $\mu_X$,
do not hold neither for the small scale nor the large scale;
see Remark \ref{e:addvd} below for details.
Due to the oscillation
property of $\{W(x)\}_{x\in \R}$ and its H\"older coefficients, the techniques in terms of good ball conditions (\cite{B,BCh}),
the integrated version of Davies' method (\cite{ADS1,ADS2}), and some uniform control of escape probabilities
(\cite{CHK1,CHK}) could not be applied to the Brox diffusion. We shall develop
new methods to tackle these difficulties for heat kernel estimates of the Brox diffusion.

\item[{\rm(ii)}]  The generator of the Brox diffusion $X$ is symmetric with respect to $\mu_X(dx) =\exp(-W(x))\,dx$. So, the random term $e^{W(y,\om)}$ naturally appears in both-sided quenched estimates for the heat kernel $p^X(t,x,y,\om)$ in Theorem \ref{t1-1}. On the other hand, in Theorem \ref{t1-1}(i) the random perturbation terms in upper and
    lower bounds of the quenched estimates, such as $\exp\big(-C_5(\omega)t^2[\log(2+|x|+|y|)]^{{2}/{\alpha}}\big)$ and
    $\exp\big(C_6(\omega) t^3[\log(2+|x|+|y|)]^{ {3}/{\alpha}} \big)$, arise from the growth property for the H\"older coefficients $\Xi(x,r,\om)$ and $\Upsilon(r,c_0,\om)$,  defined
    by \eqref{e3-1a} and \eqref{e3-3} respectively. Moreover, Corollary \ref{c1-1}(ii) reveals that
    in the regimes of finite time and large distance, we can obtain the Gaussian type two-sided estimates with non-random coefficients
    for the heat kernel $p(t,0,x)$ of the Brox diffusion.

\item[{\rm(iii)}]  The leading term for annealed estimates of $p(t,x,x)$ for large time $t$ is of order
$(\log t)^{-2}$, which in some sense is consistent with Schumacher and Brox's annealed results for the growth of the Brox diffusion $X$ for large time $t$ (which is with the order $(\log t)^2$).
As indicated in the proof of Theorem \ref{thm22},
such term is determined by the dominated event that $W$ firstly reaches a relatively large positive value with the probability of the order $\log t$ for large $t$, in comparison with the corresponding negative value (which is the valley of $\{W(x)\}_{x\in \R}$). In this sense, we know that anomalous behaviors for the heat kernel of the Brox diffusion $X$ mainly come from the large oscillation
of the positive value for $W$,
which are completely different from the trap models studied by \cite{BC,BeC,BBHK,Bo}. Indeed, for trap models or other random walks in random media, the annealed (on-diagonal) heat kernel estimates usually have tight asymptotic behaviors, and the fluctuation results like Theorem \ref{thm22} only occur for quenched large time heat kernel estimates, see \cite[Theorems 1.2 and 1.4]{ACK}. The later is associated with the corresponding fluctuations of the volume, which is broken down for Brox's diffusion since the volume doubling does not hold in large scale. See the survey paper \cite{ACK2} and references therein on the related topic.

\item[{\rm(iv)}]  A very precise image of the almost sure asymptotic behaviors of
Brox's diffusion has been established in \cite[Theorems 1.6, 1.7 and 1.8]{HS}. In particular, the limsup or the liminf of the sample path asymptotics is of the order $(\log t)^2$ with positive or negative power of $(\log\log\log t)$ as a lower
order
perturbation. These statements further corresponds to annealed heat kernel estimates stated in Theorem \ref{thm22}. We shall mention that heat kernel estimates
seem more complicated
than the probability estimates involved in asymptotic behaviors, since they are concerned on the estimates for transition density functions.
\end{itemize}\end{remark}

\subsection{Approach} We briefly illustrate the approaches of our main results.
According to Brox's construction above, formally $S(x)$ is the scale function of the Brox diffusion $\{X(t)\}_{t\ge 0}$, and $T(t)$ is a positive continuous additive functional of the Brownian motion $\{B(t)\}_{t\ge 0}$. Thus, $\{Y(t)\}_{t\ge 0}:=\{B(T^{-1}(t))\}_{t\ge0}$  is a time-change of $\{B(t)\}_{t\ge 0}$, which is a $\mu_Y$-symmetric strong Markov process on $\R$ with $\mu_Y(dx)=\exp (-2W(S^{-1}(x)))\,dx$; see  \cite[Theorem 5.2.2]{CF}.
On the other hand, we know that a time-change of Brownian motion  does not change its transience
and recurrence (see \cite[Theorem 5.2.5]{CF}). It follows from \cite[Corollaries 3.3.6 and 5.2.12]{CF} that
the Dirichlet from $(\mathcal{E}_Y,\cF_Y)$ on $L^2(\mu_Y)$ associated with the process $Y$ is given by
$$\mathcal{E}_Y(f,f)=\frac12\int_\mathbb{R}f'(x)^2\,dx,\quad \cF_Y=\{f\in L^2(\mu_Y):\cE_Y(f,f)<\infty\}.$$

As mentioned in the beginning of the previous subsection, the Brox diffusion $X$ is a $\mu_X$-symmetric Markov process on $\R$ with $\mu_X(dx)=\exp (-W(x)))\,dx$.
Denote by $p^X(t,x,y)$ (resp.\ $p^Y(t,x,y)$) the heat kernel of the Brox process $X$ with respect to $\mu_X$ (resp. the time-change process $Y$ with respect to $\mu_Y$). Then, by the fact $X(t)=S^{-1}(Y(t))$, for any $f\in C_b(\mathbb{R})$, $t>0$ and $x\in \R$,
\begin{align*}
\int_\mathbb{R}p^X(t,x,y)f(y)\,\mu_X(dy)&=\bE(f(X(t))|X(0)=x)=\bE (f(S^{-1}(Y(t)))|Y(0)=S(x))\\
&=\int_\mathbb{R}p^Y(t,S(x),y)f(S^{-1}(y))\,\mu_Y(dy)\\
&=\int_\mathbb{R}p^Y(t,S(x),y)f(S^{-1}(y))\exp (-2W(S^{-1}(y)))\,dy\\
&=\int_\mathbb{R}p^Y(t,S(x),S(y))f(y)e^{-W(y)}\,dy\\
&=\int_\mathbb{R}p^Y(t,S(x),S(y))f(y)\,\mu_X(dy),
\end{align*}
which implies that for any $t>0$ and $x,y\in \R$,
\begin{align}\label{eqxy}
p^X(t,x,y)=p^Y(t,S(x),S(y)).
\end{align}
Thus, in order to obtain the estimates for $p^X(t,x,y)$ we turn to
those
for $p^Y(t,x,y)$, which is the heat kernel corresponding to a time-change of recurrent Brownian motion $\{Y(t)\}_{t\ge 0}$. Furthermore, we will make use of the approach through the theory of resistance forms for strongly recurrent Markov processes (see \cite{BCK, T04}) to establish the estimates of $p^Y(t,x,y)$. The crucial ingredient in our proof is to obtain suitable estimates of $V(S(x),R)$. Here, $V(x,R)$ is the volume of the ball induced by the reference measure $\mu_Y(dx)$. We emphasize that, in the present setting the so-called volume doubling conditions do not hold. Furthermore, for small time quenched
estimates, we derive the escape probabilities by making full use of the growth property of H\"older's coefficients for the Brownian sample path. While for large time annealed
estimates, we will introduce a proper decomposition of the probability space (i.e., environments) related to the valley of the Brownian motion $W$. Herein, we also apply a few known estimates for the functional of Brownian motion.

\ \

The rest of the paper is arranged as follows. Section \ref{section2} is devoted to some preliminary estimates for the time-change process $\{Y(t)\}_{t\ge 0}$. With the aid of these estimates, we present the proofs of Theorems \ref{t1-1} and \ref{thm22} is Sections \ref{section3} and \ref{section4}, respectively. Throughout the proofs, we will omit the variable $\om$ from time to time if no confusion is caused, and
all the constants $C_i$ or $c_i$ are non-random without particularly clarification.

\section{Preliminary estimates for the time-change process $\{Y(t)\}_{t\ge 0}$}\label{section2}

With the aid of \eqref{eqxy}, in order to establish heat kernel estimates for the Brox diffusion $\{X(t)\}_{t\ge 0}$ we shall consider bounds for the heat kernel $p^Y(t,x,y)$ of the time-change process $\{Y(t)\}_{t\ge 0}$. Recall that $\mu_Y(dx)=\exp (-2W(S^{-1}(x)))\,dx$ is the reference measure for the process $\{Y(t)\}_{t\ge 0}$, where $S(x)=\displaystyle \int_0^x e^{W(z)}\,dz$.
In the following, for every $x\in \R$ and $R>0$, define
$$B(x,R):=(x-R, x+R),\quad V(x,R):=\mu_Y(B(x,R)).$$ Similarly,
define
$
B_+(x,R):=[x, x+R),$ $ V_+(x,R):=\mu_Y(B_+(x,R))$, $B_-(x,R):=(x-R,x]$ and $V_-(x,R):=\mu_Y(B_-(x,R)).
$

We begin with the following elementary properties.
\begin{lemma}\label{l2-1}
For every $x\in \R$, it holds almost surely that
\begin{equation}\label{l2-1-1}
\lim_{R \to \infty}V_+(x,R)= \lim_{R \to \infty}V_-(x,R)=\lim_{R \to \infty}V(x,R)=\infty
\end{equation} and
\begin{equation}\label{l2-1-2}
\lim_{t \to 0}p^Y(t,x,x)=\infty,\quad \lim_{t \to \infty}p^Y(t,x,x)=0.
\end{equation}
\end{lemma}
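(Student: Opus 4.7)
The plan is to handle the two statements by essentially independent arguments. For \eqref{l2-1-1}, I would change variables $y=S(z)$, $dy=e^{W(z)}\,dz$, to write
$$V_+(x,R) = \int_x^{x+R} e^{-2W(S^{-1}(y))}\,dy = \int_{S^{-1}(x)}^{S^{-1}(x+R)} e^{-W(z)}\,dz,$$
and analogously for $V_-(x,R)$. Two consequences of the recurrence of the two-sided Brownian environment $W$ then suffice: almost surely $S(z)\to\pm\infty$ as $z\to\pm\infty$, so that $S^{-1}(x\pm R)\to\pm\infty$ as $R\to\infty$; and $\int^{\pm\infty}e^{-W(z)}\,dz=\infty$. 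Both reduce to the observation that for a.e.\ $\om$ the set $\{z\ge 0 : W(z,\om)\in[-1,1]\}$, and its analogue on $(-\infty,0]$, has infinite Lebesgue measure, which forces $\int_0^\infty e^{W(z)}\,dz$ and $\int_0^\infty e^{-W(z)}\,dz$ both to diverge; the negative half-line is symmetric. Combining gives $V_\pm(x,R)\to\infty$ and hence $V(x,R)\to\infty$.

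For the small-time part of \eqref{l2-1-2}, I would use the standard on-diagonal identity
$$p^Y(2t,x,x)=\int_\R p^Y(t,x,y)^2\,\mu_Y(dy)=\|p^Y(t,x,\cdot)\|_{L^2(\mu_Y)}^2$$
(a direct consequence of Chapman--Kolmogorov and symmetry), together with Cauchy--Schwarz: for every $\eps>0$,
$$\bP_x\bigl(Y(t)\in B(x,\eps)\bigr)=\int_{B(x,\eps)}p^Y(t,x,y)\,\mu_Y(dy)\le \sqrt{V(x,\eps)\,p^Y(2t,x,x)}.$$
The sample-path continuity of $Y$ at the origin, inherited from $B$ via the continuous time-change $T^{-1}$ with $T^{-1}(0)=0$, makes the left-hand side tend to $1$ as $t\to 0$. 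Since $W\circ S^{-1}$ is locally bounded, $V(x,\eps)\to 0$ as $\eps\to 0$. Letting first $t\to 0$ and then $\eps\to 0$ yields $p^Y(t,x,x)\to\infty$.

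For the large-time part of \eqref{l2-1-2}, I would appeal to the spectral theorem for the self-adjoint $L^2(\mu_Y)$-semigroup $\{P^Y_t\}$ associated with the Dirichlet form $\cE_Y(f,f)=\tfrac12\int(f')^2\,dx$. The process $Y$ is recurrent (as a time change of recurrent Brownian motion, by \cite[Theorem 5.2.5]{CF}), and $\mu_Y(\R)=\infty$ by part (i). Any $f\in L^2(\mu_Y)$ in the kernel of the generator satisfies $\cE_Y(f,f)=0$, hence is a.e.\ constant; since $\mu_Y$ is infinite no non-zero constant lies in $L^2(\mu_Y)$, so the kernel is $\{0\}$. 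The spectral theorem then yields $\|P^Y_t f\|_{L^2(\mu_Y)}\to 0$ for every $f\in L^2(\mu_Y)$. Applying this with $f=p^Y(1,x,\cdot)\in L^2(\mu_Y)$ (its squared $L^2$-norm equals $p^Y(2,x,x)<\infty$) and using $P^Y_t f=p^Y(t+1,x,\cdot)$ gives $p^Y(2(t+1),x,x)=\|p^Y(t+1,x,\cdot)\|_{L^2(\mu_Y)}^2\to 0$.

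The only delicate step is the $t\to 0$ argument, which interlocks the on-diagonal $L^2$-identity with the local finiteness of $\mu_Y$ and the sample-path continuity of $Y$ at $t=0$; the $t\to\infty$ statement becomes essentially automatic once recurrence of $Y$ and $\mu_Y(\R)=\infty$ are in hand.
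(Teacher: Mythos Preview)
Your proposal is correct, and each of the three parts uses a genuinely different route from the paper.

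For \eqref{l2-1-1} the paper combines the law of the iterated logarithm with a Borel--Cantelli argument to produce explicit sequences $\{s_n^\pm\}$ along which $W$ is large or small, whereas you invoke recurrence of $W$ directly to see that the occupation time of $[-1,1]$ is infinite. Your argument is shorter and just as rigorous; the paper's sequences are not reused elsewhere, so nothing is lost.

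For the small-time part of \eqref{l2-1-2} the paper compares with the Dirichlet heat kernel on $B(x,1)$, using that $\mu_Y$ is locally comparable to Lebesgue measure to import the classical bound $p^Y_{B(x,1)}(t,x,x)\ge c(\om)t^{-1/2}$. Your Cauchy--Schwarz argument is softer and avoids any quantitative input; both are fine for the qualitative statement.

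For the large-time part of \eqref{l2-1-2} your spectral argument is clean and correct (triviality of $\ker L$ follows exactly as you say from $\mu_Y(\R)=\infty$). The paper instead derives the differential inequality
$$\psi'(t)\le \frac{1}{2r}\Bigl(\frac{2}{V_+(0,r)^2}-\psi(t)^2\Bigr),\quad \psi(t)=p^Y(2t,0,0),$$
and argues by contradiction. This is more laborious, but there is a reason: the same inequality is quoted verbatim in the proof of Lemma~\ref{upper} (on-diagonal upper bounds), where it is integrated to obtain the quantitative estimate $p^Y(4V_+(x,R)R,x,x)\le 2/V_+(x,R)$. So while your proof of Lemma~\ref{l2-1} stands on its own, be aware that the paper's argument is doing double duty, and the differential inequality would need to be established separately if you follow your route.
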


We will firstly introduce some some notations.
For any $a<b$, define
$$\xi_0(a,b)=\sup_{a\le s\le t\le b} |W(s,\om)-W(t,\om)|,\quad \om \in \Omega.$$ For simplicity, write $\xi_0(a)=\xi_0(0,a)$ for all $a>0.$ Then, $\xi_0(a,b)$ has the same distribution as $\xi_0(0,b-a)$. According to \cite[(2.4)]{Ta72}, there is a constant $c_0>0$ such that for all $\lambda, a>0$,
\begin{equation}\label{e:es}\Pp(\xi_0(a)\ge \lambda a^{1/2})\le c_0 \lambda^{-1}\exp\left(-\lambda^2/2\right).\end{equation}  Define
\begin{equation}\label{brownian holder1}
\xi(x,r;\omega):=\sup_{s,t\in [x-r,x+r]}{\left|W(s,\om)-W(t,\om)\right|},\quad  \omega\in \Omega,\ x\in \R,\, r\ge0.
\end{equation}
Throughout this paper, we will fix $\alpha\in (0,1/2)$, and we set
\begin{equation}\label{e3-1a}
\Xi(x,r;\omega):=\sup_{s,t\in [x-r,x+r]}\frac{\left|W(s)-W(t)\right|}{|t-s|^{\alpha}},\quad  \omega\in \Omega,\ x\in \R,\ r>0.
\end{equation}
It is clear that
\begin{equation}\label{e3-2}
\xi(x,s;\omega)\le  \Xi(x,r;\omega)(2s)^\alpha,\ \ \omega\in \Omega,\ x\in \R,\ 0<s\le r.
\end{equation}

\begin{proof} [Proof of Lemma $\ref{l2-1}$]
For simplicity we only prove the assertion when $x=0$, and the conclusion for
all $x\in \R$ can be proved by the same way.

(i) Let
$$
\xi_{0,n}:=\xi_0(n,n+1)=\sup_{n\le s,t\le n+1}{|W(s)-W(t)|},\quad n\ge 0.
$$
By the property of the Brownian motion $W$ and \eqref{e:es},  $\{\xi_{0,n}\}_{n\ge 1}$ is a sequence of
i.i.d. random variables so that $$
\sum_{n=0}^\infty\Pp\left(|\xi_{0,n}|>\sqrt{4\log (2+n)}\right)
\le c_0\sum_{n=0}^\infty \frac{1}{(2+n)^2}<\infty.
$$
Combining this with Borel-Cantelli' lemma yields that there is a random integer $n_0:=n_0(\om)>0$ such that almost surely
\begin{equation}\label{l2-1-3}
|\xi_{0,n}|\le \sqrt{4\log (2+n)},\quad n>n_0.
\end{equation} On the other hand, according to the law of iterated logarithm for the Browian motion $W$, it holds that almost surely
$$
\varliminf_{s \to \infty}\frac{W(s)}{\sqrt{2s\log\log s}}=-1,\quad
\varlimsup_{s \to \infty}\frac{W(s)}{\sqrt{2s\log\log s}}=1.
$$
This together with  \eqref{l2-1-3} gives us that there are two  random  sequences $\{s_n^+\}_{n\ge 1}:=\{s_n^+(\om)\}_{n\ge 1}$ and  $\{s_n^-\}_{n\ge 1}:=\{s_n^-(\om)\}_{n\ge 1}$ such that
\begin{equation}\label{l2-1-4}
\begin{split}
& s_{n+1}^+-s_n^+\ge 4,\quad \inf_{s\in [s_n^+,s_n^++1]}W(s)
/\sqrt{s}\ge 1,\\
& s_{n+1}^--s_n^-\ge 4,\quad \sup_{s\in [s_n^-,s_n^-+1]}W(s)
/\sqrt{s}\le -1.
\end{split}
\end{equation}

Recall that
$$
\mu_Y(dy)=\exp (-2W(S^{-1}(y)))\,dy,\quad  S(y)=\int_0^y e^{W(z)}\,dz.
$$
By \eqref{l2-1-4}, we know immediately that $S(\cdot)$ is strictly increasing so that almost surely
\begin{align}\label{l2-1-5}
\lim_{y \to \infty} S(y)=\infty,\quad \lim_{y\to\infty}S^{-1}(y)=\infty.
\end{align}
By the change of variable $z=S^{-1}(y)$, it holds that
$$
V_+(0,R)=\int_0^R \exp (-2W(S^{-1}(y)))\,dy=\int_0^{S^{-1}(R)}\exp(-W(z))\,dz.
$$
Hence, it follows from \eqref{l2-1-4} and \eqref{l2-1-5} that almost surely
$$
\lim_{R \to +\infty}V_+(0,R) =\lim_{y \to  \infty}\int_0^{y}\exp(-W(z))\,dz\ge \sum_{n=1}^\infty \int_{s_n^-}^{s_n^-+1}\exp(-W(z))\,dz\ge \sum_{n=1}^\infty \inf_{s\in [s_n^-,s_n^-+1]}e^{\sqrt{s}}=\infty.
$$

By the same argument, we  also can prove other conclusions in \eqref{l2-1-1}, and so we omit
the details here.

(ii) For any bounded subset $U \subset \R$, let $p^Y_U(\cdot,\cdot,\cdot):\R_+\times U \times U \to \R_+$ be the Dirichlet heat kernel
associated with the process $Y$ killed on exiting from $U$. In particular, this subprocess  corresponds to the Dirichlet form $\left(\mathcal{E}_Y^U, \cF^U\right)$ on $L^2(U;\mu_Y)$ as follows:
$$\mathcal{E}_Y^U(f,f)=\frac12\int_U |f'(x)|^2\,dx,\quad \cF^U=\overline{C_c^\infty(U)}^{(\mathcal{E}_Y^U(\cdot,\cdot)+\|\cdot\|_{L^2(U;\mu_Y)}^2)^{1/2}}.$$
This is, $\cF^U$ is the closed extension
of $C_c^\infty(U)$ under the norm $(\mathcal{E}_Y^U(\cdot,\cdot)+\|\cdot\|_{L^2(U;\mu_Y)}^2)^{1/2}$.

Below, we choose $U=B(0,1)$. According to \eqref{l2-1-5}, we know that almost surely the density function (with respect to the Lebesgue measure) of $\mu_Y$
is locally bounded in $\R$. Thus, there exist  positive random   variables $ c_1(\om)$ and $ c_2(\om)$ such that
$$
c_1(\om) |A|\le \mu_Y(A)\le c_2(\om) |A|,\quad A \subset B(0,1),
$$
where $|A|$ denotes the Lebesgue measure of $A$.
In particular,
$(\mathcal{E}_Y^{B(0,1)}(\cdot,\cdot)+\|\cdot\|_{L^2(B(0,1);\mu_Y)}^2)^{1/2}$ is comparable to the norm associated with Brownian motion killed on exiting from $B(0,1)$. Applying
the standard result (see \cite[Chapter 5]{S}), $$
p^Y_{B(0,1)}(t,0,0)\ge c_3(\om) t^{-1/2},\quad  t\in (0,1/2].
$$
Combining this with the fact $p^Y(t,0,0)\ge p^Y_{B(0,1)}(t,0,0)$, we establish
the first assertion in \eqref{l2-1-2}.

(iii)
Since $\displaystyle\int_{B_+(0,r)}p^Y(t,0,y)\,\mu_Y(dy)\leq 1$ for any $t,r>0$, there exists $y_0:=y_0(t,r,\om)\in B_+(0,r)$ such that
$$p^Y(t,0,y_0)\leq \frac1{V_+(0,r)}.$$
Define
$$\psi(t)=\int_\mathbb{R}p^Y(t,0,y)^2\,\mu_Y(dy),\quad  t>0.$$
Then, by the symmetry of $p^Y(t,x,y)$ with respect to $(x,y)$,
$$\psi(t)=\int_\mathbb{R}p^Y(t,0,y)p^Y(t,y,0)\,\mu_Y(dy)=p^Y(2t,0,0).$$
Note that, for any $f\in \cF$ and $x,y\in \R$,
\begin{equation}\label{l2-1-5a}
|f(x)-f(y)|\leq \bigg|\int_x^y f'(z)\,dz\bigg|\leq  \sqrt{2}\mathcal{E}_Y(f,f)^{1/2}|x-y|^{1/2}.
\end{equation}
We then get that for every $t>0$ and $r>0$,
\begin{align*}\frac12p^Y(t,0,0)^2\leq& p^Y(t,0,y_0)^2+|p^Y(t,0,0)-p^Y(t,0,y_0)|^2
\\\leq&\frac1{V_+(0,r)^2}+2\mathcal{E}_Y(p^Y(t,0,\cdot),p^Y(t,0,\cdot))|y_0|
\\\leq&\frac1{V_+(0,r)^2}+2\mathcal{E}_Y(p^Y(t,0,\cdot),p^Y(t,0,\cdot))r,
\end{align*}
which implies that for $t>0$
$$\psi'(t)=-2\mathcal{E}_Y(p^Y(t,0,\cdot),p^Y(t,0,\cdot))\leq \frac1 {2r}\bigg(\frac2{V_+(0,r)^2}-\psi(t/2)^2\bigg).$$
This along with  $\psi(t/2)\ge \psi(t)$, due to the fact that $\psi'(t)=-2\mathcal{E}_Y(p^Y(t,0,\cdot),p^Y(t,0,\cdot))\le 0$,
further yields that for every $t>0$ and $r>0$,
\begin{align}\label{l2-1-6}
\psi'(t)\leq \frac1{2r}\bigg(\frac2{V_+(0,r)^2}-\psi(t)^2\bigg).
\end{align}
Set $F(r):=\frac{1}{V_+(0,r)^2}$ and $F^{-1}(r):=\inf\{t>0: F(t)\le r\}$ for each $r>0$. By \eqref{l2-1-1},
$$\lim_{r \to \infty}F(r)=0,\quad \lim_{r\to 0}F^{-1}(0)=+\infty.$$
Next, we prove that it holds almost surely
\begin{equation}\label{l2-1-7}
\sup_{t\ge 1}F^{-1}(4^{-1}\psi(t)^2)=\infty.
\end{equation}
Indeed, if \eqref{l2-1-7} holds, then we can find $K>0$ large enough and  $t_0\ge 1$  so that $F^{-1}(4^{-1}\psi(t_0)^2)\ge K$. This, along
with the non-increasing properties of $\psi(\cdot)$ and $F(\cdot)$, implies that
$$
\inf_{t\ge 1}\psi(t)^2\le 4F(K).
$$
Then, letting $K \to \infty$, we obtain the second assertion in \eqref{l2-1-2}, thanks to the decreasing property of $\psi(t)$ again.

We will verify \eqref{l2-1-7} by contradiction. Now suppose that \eqref{l2-1-7} is not true. Then,  there exists $K_0>0$ such that
$$
F^{-1}(4^{-1}\psi(t)^2)< K_0,\quad t\ge 1,
$$
which implies that
\begin{align}\label{l2-1-8}
\psi(t)^2> 4F(K_0),\quad  t\ge 1.
\end{align}
Therefore, taking $r=K_0$ in \eqref{l2-1-6}, we obtain
$$
\psi'(t)\le -\frac{\psi(t)^2}{4K_0},\quad t\ge 1,
$$
from which we can deduce that
$$
\psi(t)\le \frac{4K_0}{t-1},\quad t\ge 2.
$$
Now, taking $t\to \infty$, we know that the estimate above contradicts with \eqref{l2-1-8}. Thus \eqref{l2-1-7} holds, and so we finish the proof.
\end{proof}

As mentioned above, the process $\{Y(t)\}_{t\ge 0}$ can be regarded as a time-change of the standard Brownian motion
$\{B(t)\}_{t\ge 0}$. Below we will make full use of the approaches in \cite{BCK,T04} to obtain
heat kernel estimates of the process $Y$.

\bl\label{upper} {\bf (On-diagonal upper bounds)} For any $x\in \R$ and $R>0$, almost surely
\begin{equation}\label{l2-2-1}
p^Y\left(4V(x,R)R,x,x\right)\leq \frac2{V(x,R)}
\end{equation} and
\begin{equation}\label{l2-2-1a}
p^Y\left(4V_+(x,R)R,x,x\right)\leq \frac2{V_+(x,R)},\quad p^Y\left(4V_-(x,R)R,x,x\right)\leq \frac2{V_-(x,R)}.
\end{equation}
\el
\begin{proof}
For simplicity, we only prove the first assertion in \eqref{l2-2-1a} with $x=0$. The other cases (including the first assertion in \eqref{l2-2-1a} for general $x\in \R$) can be proved by exactly the same way.

Define
$$\psi(t):=\int_\mathbb{R}p^Y(t,0,y)^2\,\mu_Y(dy)=p^Y(2t,0,0),\quad t>0.$$
According to the proof of Lemma \ref{l2-1}, we know that \eqref{l2-1-6} holds, i.e.,
$$
\psi'(t)\leq \frac1{2r}\bigg(\frac2{V_+(0,r)^2}-\psi(t)^2\bigg),\quad t,r>0.
$$

Set $\varphi(t)=2/{\psi(t)}$, so $\varphi$ is increasing as $\psi$ is decreasing
(we have mentioned that $\psi'\le 0$ in the proof of Lemma \ref{l2-1}). Moreover,  it holds that
\begin{align}\label{l2-2-2}
\varphi'(t)=-\frac12\varphi^2(t)\psi'(t)\geq \frac1{2r}\big(2-\varphi(t)^2V_+(0,r)^{-2}\big).
\end{align}
For every fixed $t>0$, by \eqref{l2-1-1}, \eqref{l2-1-2} and the fact that $r\mapsto V_+(0,r)$
is continuous and non-decreasing, we can find $r(t)>0$ such that $\varphi(t)=V_+\left(0,r(t)\right)$.
Hence, taking $r=r(t)$ in \eqref{l2-2-2} yields that
$$
\varphi'(t)\geq \frac1{2r(t)},\quad t>0.
$$
Since $t \mapsto \varphi(t)$ is increasing,  $t\mapsto r(t)$ is increasing. So, taking
integration on the both side of the inequality above, we arrive at
$$V_+(0,r(t))r(t)=\varphi(t)r(t)\ge \int_0^t\varphi'(s)r(s)\,ds\geq t/2,\quad t>0,$$
which implies that
$$\varphi\left(2V_+(0,r(t))r(t)\right)\geq \varphi(t)=V_+(0,r(t)),\quad t>0.$$
Thus, according to the definition of $\varphi$, we obtain
\begin{align}\label{l2-2-3}
p^Y\left(4V_+(0,r(t))r(t),0,0\right)\leq \frac2{V_+(0,r(t))},\quad  t>0.
\end{align}

Furthermore, according to  \eqref{l2-1-1} and \eqref{l2-1-2} again, we have immediately that $\lim_{t \to \infty}r(t)=\infty$. Then, for any given $R>0$ we can find $t:=t(R)>0$ such that $r(t)=R$.
This along with \eqref{l2-2-3} proves the desired assertion.
\end{proof}

To get on-diagonal lower bounds, we need estimates for the mean of the exit time. For any $D\subset \R$ and $r>0$,
we define $\tau^Y_{D}:=\inf\{t\ge0: Y(t)\notin D\}$.
\begin{lemma}\label{l2-3}
There exist positive  constants $C_1$ and $C_2$ such that for every $z\in \R$ and $r>0$, almost surely
\begin{equation}\label{stopping}
\bE^x[\tau^Y_{B(z,r)}]\le C_1rV(z,r),\quad x\in B(z,r)
\end{equation} and
\begin{equation}\label{stopping1}
\begin{split}
\bE^x[\tau^Y_{B(z,r)}]\geq C_2rV(z,r/2),\quad x\in B(z,r/2).
\end{split}
\end{equation}
\end{lemma}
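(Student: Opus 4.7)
The plan is to evaluate $\bE^x[\tau^Y_{B(z,r)}]$ exactly via the Green function of $Y$ killed on exit from the interval $B(z,r) = (z-r, z+r)$, and then to read both bounds off from elementary estimates on this explicit kernel. This is possible because the process is one-dimensional: once the Green function is in hand, both the upper and the lower bound are single-line integral estimates.

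Since $Y$ is symmetric with Dirichlet form $\cE_Y(f,f) = \frac{1}{2}\int_\R f'(x)^2\,dx$ on $L^2(\mu_Y)$, the restriction to functions vanishing at $z \pm r$ corresponds to $Y$ killed on exiting $B(z,r)$. The Green function $g_{B(z,r)}(x,y)$ with respect to $\mu_Y$ is characterized by $\cE_Y(g_{B(z,r)}(\cdot,y),\varphi) = \varphi(y)$ for all $\varphi \in \cF_Y$ supported in $B(z,r)$, i.e.\ by the ODE $-\tfrac12 \partial_x^2 g(\cdot,y) = \delta_y$ with zero boundary data at $z \pm r$. The measure $\mu_Y$ does not enter this ODE, so the solution coincides with the classical one-dimensional Brownian Green function:
$$g_{B(z,r)}(x,y) = \frac{(x \wedge y - (z-r))(z+r - x \vee y)}{r}, \qquad x,y \in B(z,r).$$
The same identity follows from the time-change representation $Y(t) = B(T^{-1}(t))$ with $T'(u) = e^{-2W(S^{-1}(B(u)))}$: a change of variables $t = T(u)$ turns $\bE^x\bigl[\int_0^{\tau^Y_D} f(Y(t))\,dt\bigr]$ into $\bE^x\bigl[\int_0^{\tau^B_D} f(B(u)) e^{-2W(S^{-1}(B(u)))}\,du\bigr] = \int_D g^{BM}_D(x,y) f(y)\,\mu_Y(dy)$. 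Taking $f \equiv 1$ yields
$$\bE^x\bigl[\tau^Y_{B(z,r)}\bigr] = \int_{B(z,r)} g_{B(z,r)}(x,y)\,\mu_Y(dy).$$

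For the upper bound \eqref{stopping}, note that for all $x,y \in B(z,r)$ the two nonnegative factors in the numerator of $g_{B(z,r)}$ sum to $2r - |x-y| \le 2r$, so AM--GM gives $g_{B(z,r)}(x,y) \le r$ throughout $B(z,r)$, and integration against $\mu_Y$ produces $\bE^x[\tau^Y_{B(z,r)}] \le r V(z,r)$, i.e.\ $C_1 = 1$. For the lower bound \eqref{stopping1}, assume $x \in B(z,r/2)$ and restrict the integration to $y \in B(z,r/2)$; for such $x,y$ one has $x \wedge y - (z-r) \ge r/2$ and $z+r - x \vee y \ge r/2$, hence $g_{B(z,r)}(x,y) \ge r/4$ on $B(z,r/2)$, giving $\bE^x[\tau^Y_{B(z,r)}] \ge (r/4) V(z,r/2)$, i.e.\ $C_2 = 1/4$. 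No real obstacle is anticipated: since $W$ is almost surely locally bounded, $\mu_Y$ is a locally finite Radon measure so $V(z,r) < \infty$, and the one-dimensional Green function identity holds pathwise in $\omega$, which is exactly what produces the claimed almost-sure inequalities with deterministic constants.
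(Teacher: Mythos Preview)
Your proof is correct and follows essentially the same route as the paper's: both use that the Green function of $Y$ on $B(z,r)$ coincides with the Brownian one (the paper invokes time-change invariance of the Green function from \cite{FOT11}, you give this and also the ODE/Dirichlet-form argument), and then bound $G^B_{B(z,r)}$ above by $c_1 r$ on $B(z,r)$ and below by $c_2 r$ on $B(z,r/2)$. The only difference is cosmetic: the paper cites \cite{CZ} for these Green-function bounds, whereas you write out the explicit kernel and obtain the sharp constants $C_1=1$, $C_2=1/4$ via AM--GM and direct factor bounds.
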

\begin{proof}
Let $G_D^Y(x,y)$ and $G_D^B(x,y)$ be the Green function for the process $Y$ and the Brownian motion
$B$
on the open set $D$, respectively.
As $\{Y(t)\}_{t\ge 0}$ is a time-change of the Brownian motion $\{B(t)\}_{t\ge 0}$ and Green function is invariant under time-change (see \cite[Exercise 4.2.2, Lemma 5.1.3 and the first paragraph in p.\ 362]{FOT11}), we have
$$
\bE^x \left[\tau^Y_D\right]=\int_D G_D^Y(x,y)\,\mu_Y(dy)=\int_D G_D^B(x,y)\,\mu_Y(dy),\quad x\in D.
$$
Moreover, it is well known that (see \cite[p.\ 45]{CZ})
there are  constants $c_1, c_2>0$ so that for all $z\in \R$ and $r>0$,
$$
G^Y_{B(z,r)}(x,y)=G^B_{B(z,r)}(x,y)\leq c_1r, \quad x,y\in B(z,r),
$$
and
$$
G^Y_{B(z,r)}(x,y)=G^B_{B(z,r)}(x,y)\geq c_2r, \quad x,y\in B(z,r/2).
$$
Then, putting all the estimates above together, we find that for any $z\in \mathbb{R}$ and $r>0$,
$$
\bE^x[\tau^Y_{B(z,r)}]=\int_{B(z,r)}G^Y_{B(z,r)}(x,y)\,\mu_Y(dy)\leq c_1r\mu_Y(B(z,r)),\quad x\in B(z,r)$$
and
$$
\bE^x[\tau^Y_{B(z,r)}]\geq\int_{B(z,r/2)}G^Y_{B(z,r)}(x,y)\,\mu_Y(dy)\geq c_2r\mu_Y(B(z,r/2)),\quad x\in B(z,r/2).
$$
This finishes the proof.
\end{proof}

\bl\label{lower}{\bf (On-diagonal lower bounds)} For $x\in \R$ and $R>0$,  almost surely
\begin{align}\label{l2-4-0}
p^Y(2C_2RV(x,R),x,x)\geq \frac{C_2^2V(x,R)^2}{4C_1^2V(x,2R)^3},
\end{align}
where $C_1$ and $C_2$ are  given in
\eqref{stopping} and \eqref{stopping1} respectively.
\el
\begin{proof}By \eqref{stopping} and \eqref{stopping1}, for any $x\in \R$ and $R, t>0$,
\begin{align*}
C_2RV(x,R/2)\leq& \bE^x[\tau^Y_{B(x,R)}]\leq t+\bE^x[\I_{\{\tau^Y_{B(x,R)}>t\}}\bE^{Y(t)}[\tau^Y_{B(x,R)}]]
\\\leq&t+C_1RV(x,R)\bP^x(\tau^Y_{B(x,R)}>t),
\end{align*}
which implies that when $t\leq C_2RV(x,R/2)/2$
$$
\bP^x(\tau^Y_{B(x,R)}>t)\geq \frac{C_2RV(x,R/2)-t}{C_1RV(x,R)}\geq \frac{C_2V(x,R/2)}{2C_1V(x,R)}.
$$
Denote by $p^Y_{D}(t,x,y)$ the Dirichlet heat kernel associated with the process $\{Y(t)\}_{t\ge 0}$.
Since, for any $t>0$,
\begin{align*}
 \big(\bP^x(\tau^Y_{B(x,R)}>t) \big)^2=&\left(\int_{B(x,R)}p^Y_{B(x,R)}(t,x,z)\,\mu_Y(dz)\right)^2
\\\leq&\left( \int_{B(x,R)}(p^Y_{B(x,R)}(t,x,z))^2\,\mu_Y(dz)\right) \mu_Y(B(x,R))
\\=&p^Y(2t,x,x)V(x,R),
\end{align*}
we find that when $t\leq C_2RV(x,R/2)/2$,
$$p^Y(2t,x,x)\geq \frac{ (\bP^x(\tau^Y_{B(x,R)}>t) )^2}{V(x,R)}\geq \frac{C_2^2V(x,R/2)^2}{4C_1^2V(x,R)^3}.$$
This proves the desired assertion by taking $t=C_2RV(x,R/2)/2$ in the inequality above.
\end{proof}

According to \eqref{eqxy} and Lemmas \ref{upper} and \ref{lower} above, suitable estimates of $V(S(x),R)$ are key ingredients for estimates of $p^X(t,x,x)$. For our purpose,
we will establish the estimates for $V(S(x),R)$ in Lemma \ref{l3-1} below.
For every $x\in \R$ and $R>0$, set
\begin{equation}\label{e3-1}
\begin{split}
&\delta_+(x,R;\omega):=S^{-1}\left(S(x)+R\right)-x=\inf\left\{y>0:\int_x^{x+y} e^{W(z,\omega)}dz=R\right\},\\
&\delta_-(x,R;\omega):=x-S^{-1}\left(S(x)-R\right)=\inf\left\{y>0:\int_{x-y}^x e^{W(z,\omega)}dz=R\right\}.
\end{split}
\end{equation}

\begin{lemma}\label{l3-1}
For $x\in \R$ and $R>0$, it holds almost surely that
\begin{equation}\label{volume1}
\begin{split}
&2Re^{-2W(x)}e^{-2\xi(x,(2RV(S(x),R))^{1/2})}\leq V(S(x),R)
\leq  2Re^{-2W(x)}e^{2\xi(x,(2RV(S(x),R))^{1/2})},
\end{split}
\end{equation}
where $\xi(x,r)$ is defined by \eqref{brownian holder1} above.
\end{lemma}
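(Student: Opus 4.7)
The plan is to reduce the estimate for $V(S(x),R)$ to a one-variable computation combining the natural substitution $y=S(z)$, a straightforward oscillation bound, and a single Cauchy--Schwarz inequality that trades the auxiliary quantities $\delta_{\pm}(x,R;\om)$ for $V(S(x),R)$ itself.

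First I would perform the change of variable $y=S(z)$, $dy=e^{W(z)}\,dz$, in the definition of $V(S(x),R)=\mu_Y(B(S(x),R))$. Since $S^{-1}(S(x)\pm R)=x\pm\delta_{\pm}(x,R;\om)$ by \eqref{e3-1}, this gives the identity
\begin{equation*}
V(S(x),R)=\int_{S(x)-R}^{S(x)+R}e^{-2W(S^{-1}(y))}\,dy=\int_{x-\delta_-(x,R)}^{x+\delta_+(x,R)}e^{-W(z)}\,dz.
\end{equation*}

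Next, set $\eta:=\xi(x,\max(\delta_+,\delta_-);\om)$, so that $|W(z)-W(x)|\le\eta$ for $z\in[x-\delta_-,x+\delta_+]$. Integrating the pointwise bounds $e^{W(x)-\eta}\le e^{W(z)}\le e^{W(x)+\eta}$ and $e^{-W(x)-\eta}\le e^{-W(z)}\le e^{-W(x)+\eta}$ on each of the two half-intervals, and invoking $\int_x^{x+\delta_+}e^{W(z)}\,dz=R=\int_{x-\delta_-}^{x}e^{W(z)}\,dz$ from \eqref{e3-1}, yields both $Re^{-W(x)-\eta}\le\delta_{\pm}\le Re^{-W(x)+\eta}$ and $\delta_{\pm}e^{-W(x)-\eta}\le\int e^{-W(z)}\,dz\le\delta_{\pm}e^{-W(x)+\eta}$ on the corresponding half-intervals. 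Combining these and summing the two half-contributions produces the almost sure sandwich
\begin{equation*}
2Re^{-2W(x)-2\eta}\le V(S(x),R)\le 2Re^{-2W(x)+2\eta}.
\end{equation*}

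The final step is to remove the implicit $\delta_{\pm}$ hidden in $\eta$. Cauchy--Schwarz applied to the full interval $[x-\delta_-,x+\delta_+]$ gives
\begin{equation*}
(\delta_++\delta_-)^2\le\Bigl(\int_{x-\delta_-}^{x+\delta_+}e^{W(z)}\,dz\Bigr)\Bigl(\int_{x-\delta_-}^{x+\delta_+}e^{-W(z)}\,dz\Bigr)=2R\cdot V(S(x),R),
\end{equation*}
so $\max(\delta_+,\delta_-)\le\delta_++\delta_-\le(2RV(S(x),R))^{1/2}$. Monotonicity of $r\mapsto\xi(x,r;\om)$ then upgrades $\eta$ to $\xi(x,(2RV(S(x),R))^{1/2};\om)$, and inserting this into the previous display delivers \eqref{volume1}. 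The reasoning is essentially linear and I do not expect any serious obstacle; the only mild subtlety is the choice of Cauchy--Schwarz as the mechanism that makes the right-hand side depend self-referentially on $V(S(x),R)$ rather than on the unwieldy random quantities $\delta_{\pm}$.
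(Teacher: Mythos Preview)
Your proposal is correct and follows essentially the same route as the paper: the change of variable $y=S(z)$, the oscillation sandwich via $\xi$, and then Cauchy--Schwarz to bound $\delta_++\delta_-$ by $(2RV(S(x),R))^{1/2}$ are exactly the three ingredients used there. The only cosmetic difference is that the paper first records the intermediate bound with $\max\{\xi(x,\delta_-),\xi(x,\delta_+)\}$ before passing to $\xi(x,\delta_++\delta_-)$, whereas you go straight to $\eta=\xi(x,\max(\delta_+,\delta_-))$; by monotonicity of $r\mapsto\xi(x,r)$ these are equivalent.
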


\begin{proof}
Recall that for any $x\in \R$ and $R>0$,
$$V(S(x),R)=\mu_Y(B(S(x),R))= \int_{S(x)-R}^{S(x)+R}\exp (-2W(S^{-1}(y)))\,dy=\int_{S^{-1}(S(x)-R)}^{S^{-1}(S(x)+R)}\exp (-W(y))\,dy.
$$
In the following, we write $\delta_+$, $\delta_-$ and
$\xi(r)$
for
$\delta_+(x,R)$, $\delta_-(x,R)$ and
$\xi(x,r)$
respectively.
According to the definition of $\delta_+$ given by \eqref{e3-1},  we have
\begin{align}\label{eqr}
\int_x^{x+\delta_+}e^{W(z)}\,dz=R.
\end{align}
By \eqref{brownian holder1}, we know that
$$\max\left\{\int_x^{x+\delta_+}e^{W(z)-W(x)}\,dz, \int_x^{x+\delta_+}e^{W(x)-W(z)}\,dz\right\} \le
\delta_+ e^{\xi(\delta_+)}$$
and
$$\min\left\{\int_x^{x+\delta_+}e^{W(z)-W(x)}\,dz, \int_x^{x+\delta_+}e^{W(x)-W(z)}\,dz\right\} \ge
\delta_+ e^{-\xi(\delta_+)}.
$$
Hence, by \eqref{eqr},
\begin{align*}
\delta_+e^{-\xi(\delta_+)}\leq & Re^{-W(x)}=\int_x^{x+\delta_+}e^{W(z)-W(x)}\,dz
\leq \delta_+e^{\xi(\delta_+)}.
\end{align*}
This in turn yields that
\begin{align*}\int_x^{S^{-1}(S(x)+R)}e^{-W(y)}\,dy&=\int_x^{x+\delta_+} e^{-W(y)}\,dy=e^{-W(x)}\int_x^{x+\delta_+} e^{W(x)-W(y)}\,dy\\
&\geq e^{-W(x)}\delta_+e^{-\xi(\delta_+)} \geq   Re^{-2W(x)}e^{-2\xi(\delta_+)}
\end{align*}
and
\begin{align*}
\int_x^{S^{-1}(S(x)+R)}e^{-W(y)}\,dy&=\int_x^{x+\delta_+} e^{-W(y)}\,dy=e^{-W(x)}\int_x^{x+\delta_+} e^{W(x)-W(y)}\,dy\\
&\leq e^{-W(x)}\delta_+e^{\xi(\delta_+)} \leq   Re^{-2W(x)}e^{2\xi(\delta_+)}.
\end{align*}

Applying the definition of $\delta_-$ and following the arguments above, we can obtain that
\begin{align*}
Re^{-2W(x)}
e^{-2\xi(\delta_-)}
&\le \int^x_{S^{-1}(S(x)-R)} e^{-W(y)}\,dy=\int^x_{x-\delta_-} e^{-W(y)}\,dy\leq  Re^{-2W(x)}e^{2\xi(\delta_-)}.
\end{align*}

Combining with all the estimates above, we finally arrive at that
\begin{equation}\label{volume}
\begin{split}
2Re^{-2W(x)}e^{-2\max\{\xi(\delta_-),\xi(\delta_+)\}}\leq V(S(x),R)
\leq 2Re^{-2W(x)}e^{2\max\{\xi(\delta_-),\xi(\delta_+)\}}.
\end{split}
\end{equation}

On the other hand, by the Cauchy-Schwartz inequality, we derive
$$
\delta_++\delta_- =\int_{x-\delta_-}^{x+\delta_+}\, dz\leq \left(\int_{x-\delta_-}^{x+\delta_+} e^{-W(z)}\,dz\right)^{1/2}
\cdot\left(\int_{x- \delta_-}^{x+\delta_+} e^{W(z)}\,dz\right)^{1/2}=\left(2RV(S(x),R)\right)^{1/2}.
$$
This along with \eqref{volume} yields that
\begin{align*}
&2Re^{-2W(x)}e^{-2\xi(x,(2RV(S(x),R))^{1/2})}\leq V(S(x),R)
\leq  2Re^{-2W(x)}e^{2\xi(x,(2RV(S(x),R))^{1/2})}.
\end{align*}
This finishes the proof.
\end{proof}

\begin{remark}\label{e:addvd} \rm According to \eqref{volume1} and \eqref{e3-2}, as well as \eqref{e3-3} and \eqref{t1-1-2} below, one can see that the volume $V(S(x),R)$ does not satisfy the so-called volume doubling conditions both for small scale and large scale. By comparing with the known approaches in the literature for diffusions in random media, this is one of main obstacles to establish heat kernel estimates for Brox's diffusion. In order to obtain explicit statements, we will make full use of estimates for the oscillation and the H\"older coefficients of sample paths for Brownian motion. \end{remark}

\section{Quenched heat kernel estimates for small times}\label{section3}

In this section, we will give the proof of Theorem \ref{t1-1}. For every $c_0\ge 1$, we define
\begin{align}\label{e3-3}
\Upsilon(r,c_0;\omega):=\sup_{0\le i \le r}\left(\Xi(i,c_0;\omega)+\Xi(-i,c_0;\omega)\right),\quad \omega\in \Omega,\ r>0,
\end{align} where
$\Xi(x,r;\omega)$ is defined by  \eqref{e3-1a}.

\subsection{On-diagonal  estimates}
The main statement of this part is as follows.
\begin{proposition}\label{c3-1}
There exist  positive constants $C_i$, $1\le i\le 6$, such that
for every $x\in \R$, $t\in (0,1]$ and  almost all $\om\in \Omega$,
\begin{equation}\label{c3-1-1}
\begin{split}
 C_1t^{-1/2} e^{W(x)}\exp\left(-C_2t^{\alpha/2}
\Upsilon\left(1+|x|,C_3;\om\right)
\right)
&\le p^X(t,x,x)\\
&\leq C_4t^{-1/2} e^{W(x)}\exp\left(
C_5t^{\alpha/2}
\Upsilon\left(1+|x|, C_6;\om\right)
\right),
\end{split}
\end{equation}
\end{proposition}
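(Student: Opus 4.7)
The plan is to use the identity $p^X(t, x, x) = p^Y(t, S(x), S(x))$ from \eqref{eqxy}, invert the time--volume relations in Lemmas \ref{upper} and \ref{lower} by choosing the radius $R$ as a function of $t$ and $x$, substitute the two-sided volume estimate of Lemma \ref{l3-1}, and convert the resulting $\xi(x, \cdot)$-bounds into $\Xi(x, r)$-bounds via \eqref{e3-2}. A final reduction $\Xi(x, r) \le \Upsilon(|x|, r) \le \Upsilon(1+|x|, r)$, immediate from the definition of $\Upsilon$ as a supremum and the fact that $\Xi(x, r)$ equals $\Xi(|x|, r)$ or $\Xi(-|x|, r)$, then yields the stated form.

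For the upper bound, Lemma \ref{l2-1} ensures that $R \mapsto 4 R V(S(x), R)$ is a continuous strictly increasing bijection of $(0, \infty)$, so one may define $R = R(t, x)$ by $4 R V(S(x), R) = t$. Lemma \ref{upper} then gives $p^Y(t, S(x), S(x)) \le 2/V(S(x), R)$. Multiplying the lower estimate in Lemma \ref{l3-1} by $V(S(x), R)$ and using $R V(S(x), R) = t/4$ yields
\[
V(S(x), R)^2 \ge 2 R V(S(x), R)\, e^{-2 W(x)} e^{-2\xi(x, \sqrt{2 R V(S(x), R)})} = \tfrac{t}{2}\, e^{-2 W(x)} e^{-2\xi(x, \sqrt{t/2})},
\]
hence $V(S(x), R) \ge \sqrt{t/2}\, e^{-W(x)} e^{-\xi(x, \sqrt{t/2})}$. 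Since $\sqrt{t/2} \le 1$ for $t \in (0,1]$, estimate \eqref{e3-2} with $r = 1$ gives $\xi(x, \sqrt{t/2}) \le \Xi(x, 1)(2t)^{\alpha/2}$, and the reduction to $\Upsilon(1+|x|, 1)$ completes the upper bound.

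For the lower bound, define $R$ instead by $2 C_2 R V(S(x), R) = t$ and apply Lemma \ref{lower} to obtain $p^Y(t, S(x), S(x)) \ge C_2^2 V(S(x), R)^2 / (4 C_1^2 V(S(x), 2R)^3)$. The same manipulation as above gives $V(S(x), R) \ge c_1 \sqrt{t}\, e^{-W(x)} e^{-\xi(x, \sqrt{t/C_2})}$ and the a priori bound $R \le c_1' \sqrt{t}\, e^{W(x)} e^{\xi(x, \sqrt{t/C_2})}$. For $V(S(x), 2R)$, Lemma \ref{l3-1} yields only the implicit estimate $V(S(x), 2R) \le 4 R\, e^{-2 W(x)} e^{2\xi(x, \sqrt{4 R V(S(x), 2R)})}$; inserting the upper bound on $R$ and running a short bootstrap---using $t \le 1$ to keep the H\"older scale $\sqrt{4 R V(S(x), 2R)}$ inside a bounded interval where \eqref{e3-2} applies with $r$ a fixed constant---yields $V(S(x), 2R) \le c_2 \sqrt{t}\, e^{-W(x)} \exp(c_2' t^{\alpha/2} \Xi(x, C_3))$ for some $C_3 \ge 1$. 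Substituting these bounds into the ratio and invoking $\Xi(x, C_3) \le \Upsilon(1+|x|, C_3)$ produces the lower bound with appropriate universal constants.

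The main technical obstacle is the self-referential nature of Lemma \ref{l3-1}'s upper bound for $V(S(x), 2R)$, whose H\"older argument $\sqrt{4R V(S(x), 2R)}$ depends on $V(S(x), 2R)$ itself. The bootstrap above resolves this: the a priori control on $R$ combined with $t \le 1$ confines this scale to a bounded interval, in which \eqref{e3-2} applied with a fixed radius decouples the two occurrences of $V(S(x), 2R)$ and reduces the implicit inequality to an explicit one.
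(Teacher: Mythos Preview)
Your upper-bound argument is correct and matches the paper's Lemma~\ref{Q-on} essentially verbatim: defining $R$ by $4RV(S(x),R)=t$ makes the H\"older scale $\sqrt{2RV(S(x),R)}=\sqrt{t/2}$ deterministic, and everything goes through.

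The lower bound, however, has a genuine gap in the bootstrap step. With your choice $2C_2RV(S(x),R)=t$, the scale $u:=\sqrt{4RV(S(x),2R)}$ appearing in the upper estimate for $V(S(x),2R)$ is \emph{not} controlled by $t\le 1$ alone. Combining your a priori bound $R\le c\sqrt{t}\,e^{W(x)}e^{\xi(x,\sqrt{t/C_2})}$ with Lemma~\ref{l3-1} gives only
\[
u^2 \le c'\,t\,\exp\!\big(2\xi(x,\sqrt{t/C_2})+2\xi(x,u)\big),
\]
which is still implicit in $u$. The factors $e^{\xi}$ are $\omega$-random and unbounded, so the claim that $u$ stays in a ``bounded interval where \eqref{e3-2} applies with $r$ a fixed constant'' is not justified: for bad $\omega$ the right-hand side can be large, and you cannot conclude $u\le C_3$ for any deterministic $C_3$. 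Without that, $\xi(x,u)$ is not dominated by $\Xi(x,C_3)$ and the reduction to $\Upsilon(1+|x|,C_3)$ fails.

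The paper circumvents this entirely by a different choice of $R$: it defines $R(x,t)$ through $2c_2R\,V(S(x),2R)=t$ (i.e., through the \emph{larger} volume). Then $4R\,V(S(x),2R)=2t/c_2$ is deterministic, so the H\"older scale for $V(S(x),2R)$ is exactly $c_3\sqrt{t}$; and since $2R\,V(S(x),R)\le 4R\,V(S(x),2R)$, the scale for $V(S(x),R)$ is also at most $c_3\sqrt{t}$. Both volumes are thus controlled by $\xi(x,c_3\sqrt{t})$, with no bootstrap needed. Switching to this definition of $R$ repairs your argument immediately.
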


To prove Proposition \ref{c3-1}, we need the following two lemmas.

\begin{lemma}\label{Q-on}
For all $x\in\R$ and $t>0$, it holds almost surely that
\begin{align}\label{p3-1-1}
p^X(t,x,x)
\leq 2\sqrt{2}t^{-1/2}e^{W(x)}e^{3\xi(x,(t/2)^{1/2})},
\end{align}
where $\xi(x, r)$ is defined by \eqref{brownian holder1}.
\end{lemma}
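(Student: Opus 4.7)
The plan is to reduce the problem to estimating $p^Y$ at the diagonal via the identity \eqref{eqxy}, and then couple the on-diagonal upper bound from Lemma \ref{upper} with the volume estimate from Lemma \ref{l3-1}, choosing the radius in a self-referential way so that the auxiliary argument of $\xi$ collapses to $(t/2)^{1/2}$.

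Concretely, I would start from
\begin{equation*}
p^X(t,x,x) = p^Y(t, S(x), S(x)),
\end{equation*}
which is just \eqref{eqxy} at $y = x$. Next, observe that the map $R \mapsto 4V(S(x),R)R$ is continuous, vanishes at $R=0$, and tends to $+\infty$ as $R \to \infty$ by \eqref{l2-1-1} in Lemma \ref{l2-1}. Hence for every fixed $t > 0$ there exists $R = R(t,x, \omega) > 0$ satisfying
\begin{equation*}
4V(S(x),R)R = t.
\end{equation*}
With this choice, Lemma \ref{upper} applied at the point $S(x)$ (and with the chosen $R$) yields
\begin{equation*}
p^Y(t, S(x), S(x)) \le \frac{2}{V(S(x),R)}.
\end{equation*}

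The key step is now to convert the defining relation into a lower bound on $R$ through Lemma \ref{l3-1}. Since $2RV(S(x),R) = t/2$, the argument of $\xi$ in \eqref{volume1} simplifies to $(t/2)^{1/2}$, so that
\begin{equation*}
V(S(x),R) \le 2R e^{-2W(x)} e^{2\xi(x,(t/2)^{1/2})}.
\end{equation*}
Substituting this upper bound into $4V(S(x),R)R = t$ gives
\begin{equation*}
R \ge (t/8)^{1/2} e^{W(x)} e^{-\xi(x,(t/2)^{1/2})}.
\end{equation*}
On the other hand, the lower bound in \eqref{volume1} (again with the argument of $\xi$ collapsed to $(t/2)^{1/2}$) says
\begin{equation*}
\frac{2}{V(S(x),R)} \le \frac{1}{R} e^{2W(x)} e^{2\xi(x,(t/2)^{1/2})}.
\end{equation*}
Combining the last two displays, the $R^{-1}$ factor absorbs a $(8/t)^{1/2} e^{-W(x)} e^{\xi(x,(t/2)^{1/2})}$, and one obtains
\begin{equation*}
p^X(t,x,x) = p^Y(t, S(x), S(x)) \le 2\sqrt{2}\, t^{-1/2} e^{W(x)} e^{3\xi(x,(t/2)^{1/2})},
\end{equation*}
which is exactly \eqref{p3-1-1}.

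The only nontrivial point is the self-referential definition of $R$: one must know that such an $R$ exists (handled by Lemma \ref{l2-1}), and, more importantly, that when we apply the volume bounds from Lemma \ref{l3-1} the radius-dependent term $(2RV(S(x),R))^{1/2}$ can be replaced by the explicit deterministic quantity $(t/2)^{1/2}$. That identity is precisely what makes this choice of $R$ work, and it turns the two volume inequalities of Lemma \ref{l3-1} into matching lower and upper bounds on $R$ itself, so no further intricate analysis is required. I do not anticipate any serious obstacle beyond checking the algebraic bookkeeping.
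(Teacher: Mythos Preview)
Your proposal is correct and follows essentially the same approach as the paper: define $R$ by $4RV(S(x),R)=t$, apply Lemma~\ref{upper} and \eqref{eqxy} to get $p^X(t,x,x)\le 2/V(S(x),R)$, then use both sides of \eqref{volume1} (with the argument of $\xi$ collapsing to $(t/2)^{1/2}$) to bound $V(S(x),R)$ below and $R^{-1}$ above, and combine. The algebra and the existence argument for $R$ match the paper's proof exactly.
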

\begin{proof}
According to Lemma \ref{upper} and \eqref{eqxy}, for any  $x\in \R$ and $R>0$,
$$p^X\left(4RV(S(x),R),x,x\right)\leq \frac2{V(S(x),R)}.$$
This along with \eqref{volume1} yields that
$$
p^X\left(4RV(S(x),R),x,x\right)\leq R^{-1}e^{2W(x)}e^{2\xi(x,(2RV(S(x),R))^{1/2})}.
$$
According to \eqref{l2-1-1}, there exists a unique
random variable
$R(x,t)>0$ so that $t=4R(x,t)V(S(x),R(x,t))$. Thus,
\begin{align}\label{p3-1-2}
p^X(t,x,x)\leq  R(x,t) ^{-1}e^{2W(x)}e^{2\xi(x,(t/2)^{1/2})}.
\end{align}

On the other hand, by \eqref{volume1} again,  it holds that
\begin{align*}
t&=4R(x,t)V(S(x),R(x,t))\leq 8R(x,t)^2e^{-2W(x)}e^{2\xi(x,(t/2)^{1/2})},
\end{align*}
which implies
$$R(x,t)^{-1}\leq 2\sqrt{2}t^{-1/2}e^{-W(x)}e^{\xi(x,(t/2)^{1/2})}.$$

Hence, putting this into \eqref{p3-1-2}, we find that
$$p^X(t,x,x)\leq 2\sqrt{2}t^{-1/2}e^{W(x)}e^{3\xi(x,(t/2)^{1/2})}.$$
The proof is complete.
\end{proof}

\begin{lemma}\label{p3-1}
There exist positive constants $C_3, C_4, C_5$
so that for all $t>0$ and $x\in\R$,
\begin{equation}\label{p3-1-4}
p^X\left(t,x,x\right)\ge C_3t^{-1/2}e^{W(x)}e^{-C_4\xi(x,C_5t^{1/2})}.
\end{equation}
\end{lemma}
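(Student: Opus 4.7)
The plan is to mirror the proof of Lemma \ref{Q-on}, replacing the on-diagonal upper bound from Lemma \ref{upper} by the on-diagonal lower bound from Lemma \ref{lower}, then inserting the volume estimate of Lemma \ref{l3-1}. By \eqref{eqxy}, it suffices to bound $p^Y(t,S(x),S(x))$ from below. Using the monotonicity and surjectivity established in \eqref{l2-1-1}, I first fix $R=R(x,t)$ to be the unique positive number satisfying $t=2C_2 R\,V(S(x),R)$. Lemma \ref{lower} then yields
$$
p^X(t,x,x) \ge \frac{C_2^2\,V(S(x),R)^2}{4C_1^2\,V(S(x),2R)^3}.
$$

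I would next apply Lemma \ref{l3-1} at the two scales $R$ and $2R$. Set $\sigma_1:=(2R\,V(S(x),R))^{1/2}=\sqrt{t/C_2}$ and $\sigma_2:=(4R\,V(S(x),2R))^{1/2}$. The lemma gives $V(S(x),R)\ge 2R\,e^{-2W(x)}e^{-2\xi(x,\sigma_1)}$ and $V(S(x),2R)\le 4R\,e^{-2W(x)}e^{2\xi(x,\sigma_2)}$. The defining identity $t=2C_2 R\,V(S(x),R)$ combined with the lower bound on $V(S(x),R)$ gives $R\le t^{1/2}(2\sqrt{C_2})^{-1}e^{W(x)}e^{\xi(x,\sigma_1)}$, and substituting these bounds into the ratio $V(S(x),R)^2/V(S(x),2R)^3$ yields
$$
p^X(t,x,x)\ge \frac{C_2^{5/2}}{32\,C_1^2}\,t^{-1/2}\,e^{W(x)}\,e^{-5\xi(x,\sigma_1)-6\xi(x,\sigma_2)}.
$$

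The main obstacle -- the structural asymmetry with Lemma \ref{Q-on}, which passes through cleanly because it involves only one volume scale -- is to convert the factor $e^{-6\xi(x,\sigma_2)}$ into one with a deterministic scale. Combining the upper bound on $V(S(x),2R)$ from Lemma \ref{l3-1} with the bound on $R$ above, one obtains the implicit inequality
$$
\sigma_2 \le \frac{2t^{1/2}}{\sqrt{C_2}}\,e^{\xi(x,\sigma_1)+\xi(x,\sigma_2)},
$$
which reflects precisely the lack of volume doubling (cf.\ Remark \ref{e:addvd}). To close this, I would invoke the H\"older estimate \eqref{e3-2}, $\xi(x,\sigma)\le \Xi(x,r;\omega)(2\sigma)^\alpha$ for $\sigma\le r$ with $\alpha\in(0,1/2)$, reducing the above to an algebraic inequality of the form $u\le A\,t^{1/2}e^{cu^\alpha}$; since $\alpha<1/2$, a standard monotonicity/bootstrap argument gives $\sigma_2\le C_5 t^{1/2}$ for a deterministic $C_5$, possibly after enlarging $C_4$. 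Finally, monotonicity of $r\mapsto\xi(x,r)$ yields $\xi(x,\sigma_i)\le\xi(x,C_5 t^{1/2})$ for $i=1,2$ upon choosing $C_5\ge 1/\sqrt{C_2}$, leading to the stated bound with $C_4=11$ and $C_3=C_2^{5/2}/(32C_1^2)$.
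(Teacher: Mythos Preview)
Your overall strategy matches the paper's: combine Lemma~\ref{lower} with the volume estimate of Lemma~\ref{l3-1}, then solve for $R$. The divergence is in the \emph{choice of $R$}, and that is where your argument has a genuine gap.

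You fix $R$ via $t=2C_2\,R\,V(S(x),R)$, which makes the application of \eqref{l2-4-0} clean and forces $\sigma_1=(t/C_2)^{1/2}$ to be a deterministic multiple of $t^{1/2}$. But $\sigma_2=(4R\,V(S(x),2R))^{1/2}$ remains uncontrolled, and the proposed bootstrap to recover $\sigma_2\le C_5\,t^{1/2}$ with a deterministic $C_5$ does not work. First, applying \eqref{e3-2} in the form $\xi(x,\sigma_2)\le \Xi(x,r;\omega)(2\sigma_2)^\alpha$ requires an a~priori bound $\sigma_2\le r$, which is exactly what you are trying to prove. Second, even granting this, the resulting inequality $u\le A\,t^{1/2}e^{c(\omega)\,u^\alpha}$ carries the \emph{random} coefficient $c(\omega)=\Xi(x,r;\omega)$, so any bound on $u$ obtained from it must depend on $\omega$ as well and cannot produce a universal $C_5$. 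Worse, for the Brownian environment $\xi(x,r)$ is of order $\sqrt{r}$ for large $r$, so $r\mapsto r\,e^{-\xi(x,r)}$ is not eventually monotone and the implicit inequality $\sigma_2\,e^{-\xi(x,\sigma_2)}\le\mathrm{const}\cdot t^{1/2}$ simply cannot be inverted to bound $\sigma_2$.

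The paper sidesteps the whole issue by defining $R(x,t)$ through the \emph{larger} volume instead: $2c_2\,R\,V(S(x),2R)=t$. Then $\sigma_2=(4R\,V(S(x),2R))^{1/2}=(2t/c_2)^{1/2}$ is a deterministic multiple of $t^{1/2}$ \emph{by construction}, and $\sigma_1=(2R\,V(S(x),R))^{1/2}\le(2R\,V(S(x),2R))^{1/2}=(t/c_2)^{1/2}$ follows immediately from $V(S(x),R)\le V(S(x),2R)$. No bootstrap is needed, and the rest of your computation (bounding $R^{-1}$ from below via \eqref{volume1} and assembling the ratio $V(S(x),R)^2/V(S(x),2R)^3$) goes through exactly as you wrote it.
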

\begin{proof}
By Lemma \ref{lower} and \eqref{eqxy}, for any $x\in \R$ and $R>0$,
\begin{equation}\label{e:lower1}
p^X\left(2c_2RV(S(x),R),x,x\right)\geq \frac{c_2^2V(S(x),R)^2}{4c_1^2V(S(x),2R)^3},
\end{equation}
where $c_1,c_2$ are positive constants corresponding to $C_1, C_2$ given in \eqref{stopping} and \eqref{stopping1} respectively.

Fix the constant $c_2$ as above. Let $t>0$ and $x\in \R$. According to \eqref{l2-1-1}, we can find a
unique
random variable $R(x,t)>0$ such that
$2c_2R(x,t)V(S(x),2R(x,t))=t$.
Then, by \eqref{volume1}, there is a constant $c_3>0$ such that
\begin{align*}
V(S(x),R(x,t))\ge 2R(x,t)e^{-2W(x)}e^{-2\xi(x,c_3t^{1/2})}
\end{align*} and
\begin{align*}
V(S(x),2R(x,t))&\le
2R(x,t)
e^{-2W(x)}e^{2\xi(x,c_3t^{1/2})}.
\end{align*}
Combining all the estimates above into \eqref{e:lower1} yields that
\begin{align}\label{p3-1-3}
p^X\left(t,x,x\right)\ge c_4R(x,t)^{-1}e^{2W(x)}e^{-10\xi(x,c_3t^{1/2})}.
\end{align}

Furthermore, by \eqref{volume1} again, it holds that
$$
t=2c_2R(x,t)V(S(x),2R(x,t))\ge c_{5}R(x,t)^2e^{-2W(x)}e^{-2\xi(x,c_6t^{1/2})},
$$
which implies
$$R(x,t)^{-1}\geq c_{7}t^{-1/2}e^{-W(x)}e^{-\xi(x,c_6t^{1/2})}.$$
Putting this into \eqref{p3-1-3}, we can prove the desired conclusion \eqref{p3-1-4}.
\end{proof}

\begin{proof}[Proof of Proposition $\ref{c3-1}$] According to \eqref{e3-3},
for any $c_0\ge 1$, we have $\Xi(x,c_0;\omega)\le  \Upsilon\left(1+|x|,c_0;\om\right)$ for all $x\in \R$.
Then, combining \eqref{p3-1-1}, \eqref{p3-1-4} with \eqref{e3-1a} and \eqref{e3-2}, we can prove the desired conclusion \eqref{c3-1-1} immediately.
\end{proof}

\subsection{Off-diagonal estimates}

In this part, we prove off-diagonal quenched estimates of $p^X(t,x,y)$
for small time.

\begin{proposition}\label{p3-2}
There exist
positive constants $C_1$, $C_2$ and $C_3$ such that for every $x,y\in \R$ and $t\in (0,1]$
satisfying that $|x-y|\ge t^{1/2}$,
\begin{equation}\label{p3-2-0}
\begin{split}
p^X(t,x,y)\ge  &C_1e^{W(y)}t^{-1/2}\exp\left(-\frac{C_2|x-y|^2}{t}\right)\\
 &\times\exp\left(-C_{2}t
\Upsilon\left(1+|x|+|y|,C_3;\om\right)^{2/\alpha}
\log\left(t^{1/2}
\Upsilon(1+|x|+|y|, C_3; \om)^{1/\alpha}
\right)\right).\end{split}
\end{equation}
\end{proposition}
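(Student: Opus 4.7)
My plan is to derive the off-diagonal lower bound via a standard chaining argument for the time-changed process $Y$, using the identity $p^X(t,x,y)=p^Y(t,S(x),S(y))$ from \eqref{eqxy}, combined with an improved near-diagonal lower bound and the volume estimate of Lemma~\ref{l3-1}.

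The first step is a near-diagonal lower bound: $p^Y(t,u,v)\ge\tfrac12 p^Y(t,u,u)$ whenever $|u-v|$ is small in an environment-dependent sense. Applying \eqref{l2-1-5a} to $f=p^Y(t,u,\cdot)$ gives $|p^Y(t,u,v)-p^Y(t,u,u)|^2\le 2\mathcal{E}_Y(p^Y(t,u,\cdot),p^Y(t,u,\cdot))\,|u-v|$, and since $-\partial_t p^Y(2t,u,u)=2\mathcal{E}_Y(p^Y(t,u,\cdot),p^Y(t,u,\cdot))$, a standard spectral-type calculation (using the $t^{-1/2}$ behavior extracted from Lemma~\ref{upper}) yields $\mathcal{E}_Y(p^Y(t,u,\cdot),p^Y(t,u,\cdot))\le Cp^Y(t,u,u)/t$. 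Combining with the on-diagonal lower bound of Proposition~\ref{c3-1} gives the near-diagonal estimate in quantitative form, with the admissible range for $|u-v|$ controlled by $\xi(S^{-1}(u),c\sqrt t)$.

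Next I carry out the chaining. Assuming $x<y$, fix an integer $N$ (to be chosen) and let $x_k=x+k(y-x)/N$, $u_k=S(x_k)$, and let $B_k\subset\R$ be a small interval around $u_k$ whose radius $\rho$ is comparable to the near-diagonal range for $p^Y(t/N,\cdot,\cdot)$. By the semigroup property,
\begin{equation*}
p^Y(t,u_0,u_N)\ge \int_{B_1\times\cdots\times B_{N-1}}\prod_{k=1}^N p^Y(t/N,z_{k-1},z_k)\prod_{j=1}^{N-1}d\mu_Y(z_j),\qquad z_0=u_0,\ z_N=u_N.
\end{equation*}
Bounding each factor via the near-diagonal estimate, using Lemma~\ref{l3-1} for $\mu_Y(B_k)$, and telescoping the resulting factors $e^{\pm W(x_k)}$ produces the prefactor $Ct^{-1/2}e^{W(y)}$. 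Writing $|u_k-u_{k-1}|=\int_{x_{k-1}}^{x_k}e^{W(z)}\,dz\le (|x-y|/N)\,e^{W(x_{k-1})+\xi(x_{k-1},(y-x)/N)}$ and using the sub-Gaussian decay at each step, the step-wise Gaussian exponents sum to $C|x-y|^2/t$ up to an oscillation error, while the Hölder correction at each step, $\exp(-C\xi(x_{k-1},c\sqrt{t/N}))$, combines with \eqref{e3-2}--\eqref{e3-3} into a total correction at most $C\,\Upsilon(1+|x|+|y|,c_0)\,N^{1-\alpha/2}t^{\alpha/2}$. The constraint $|u_k-u_{k-1}|\le\rho$ forces $N$ to be large enough to absorb the worst local growth of $e^{W}$ along $[x,y]$; optimizing $N$ (slightly above the minimal value $|x-y|^2/t$) then produces the announced extra factor $\exp\bigl(-Ct\,\Upsilon^{2/\alpha}\log(t^{1/2}\Upsilon^{1/\alpha})\bigr)$.

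The main obstacle I expect is the simultaneous control of the chain: because $\mu_Y$ fails volume doubling (Remark~\ref{e:addvd}), both the admissible ball radius $\rho$ and the $Y$-step $|u_k-u_{k-1}|$ vary along $[x,y]$ with the local value of $W$. The choice of $N$ and $\rho$ must therefore be carefully tuned so that the Hölder coefficient $\Upsilon$ simultaneously controls the step oscillation, the volume $\mu_Y(B_k)$ via Lemma~\ref{l3-1}, and the near-diagonal range uniformly in $k$; matching the accumulated Hölder correction to the logarithmic factor in $N$ is precisely where the $\Upsilon^{2/\alpha}\log(\cdot)$ dependence in the statement arises.
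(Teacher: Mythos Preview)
Your strategy is sound and matches the paper's in spirit: both use the H\"older-type inequality \eqref{l2-1-5a} together with the bound $\mathcal{E}_Y(p^Y(t,u,\cdot),p^Y(t,u,\cdot))\le Cp^Y(t,u,u)/t$ to get a near-diagonal lower bound, and then chain with $N$ chosen as the maximum of $|x-y|^2/t$ and a quantity of order $t\,\Upsilon^{2/\alpha}$. The difference is the coordinate system in which the chaining is carried out. You chain in $Y$-coordinates, integrating against $\mu_Y$ over balls $B_k$ around $u_k=S(x_k)$; the paper instead transfers the near-diagonal bound back to $p^X$ right away---obtaining $p^X(t,x,y)\ge c\,t^{-1/2}e^{W(y)}$ whenever $|x-y|\le 2c_9\sqrt{t}$ and $t\le 2c_9\,\Xi(y,c_7)^{-2/\alpha}$---and then chains entirely in the original coordinates over equally spaced points $x_i=x+i(y-x)/N$ and balls $B(x_i,|x-y|/(2N))$ with respect to $\mu_X$.

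That choice removes exactly the obstacle you flag. In $X$-coordinates the step is uniform, the near-diagonal condition becomes simply $|x-y|/N\lesssim\sqrt{t/N}$, and the volume $\mu_X(B(x_i,\cdot))$ contributes a single factor $e^{-W(x_i)}$ that telescopes cleanly against the $e^{W(x_{i+1})}$ coming from the heat-kernel bound; no appeal to Lemma~\ref{l3-1} is needed at this stage. The two constraints on $N$ are then transparent, and the $\Upsilon^{2/\alpha}\log(\cdot)$ term falls out of $(N-1)\log c$ when the second constraint is binding. Your $Y$-chaining should also succeed, since the position-dependent factors $e^{\pm W(x_k)}$ hidden in $|u_k-u_{k-1}|$, in the near-diagonal range for $p^Y$, and in $\mu_Y(B_k)$ ultimately cancel; but to make this rigorous you would need to let the radius $\rho=\rho_k$ vary with $k$ (a single $\rho$ cannot work, contrary to what your phrasing suggests) and check the cancellation through Lemma~\ref{l3-1}. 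The paper's route simply sidesteps this bookkeeping.
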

\begin{proof}
Note that the process $\{Y(t)\}_{t\ge 0}$ is associated with
the following Dirichlet form  $(\mathcal{E}_Y,\cF_Y)$ on $L^2(\mu_Y)$:
$$\mathcal{E}_Y(f,f)=\frac12\int_\mathbb{R}|f'(x)|^2\,dx,\quad \cF_Y=\{f\in L^2(\mu_Y):\cE(f,f)<\infty\}.$$
According to \cite[(4.17)]{B1} or \cite[(2.4), p. 2997]{CHK},
$$
\cE_Y\left(p^Y(t,x,\cdot),p^Y(t,x,\cdot)\right)\le \frac{p^Y(t,x,x)}{t},\quad x\in \R,\ t>0.
$$
Combining this with \eqref{l2-1-5a} and the symmetry of $p^Y(t,x,y)$ with respect to $(x,y)$ yields that
$$
\left|p^Y(t,x,y)-p^Y(t,y,y)\right|^2\le \frac{2 p^Y(t,y,y)}{t}\cdot|x-y|,\quad x,y\in \R,\ t>0.
$$
Hence, by \eqref{eqxy},
\begin{align*}
\left|p^X(t,x,y)-p^X(t,y,y)\right|&=\left|p^Y\left(t,S(x),S(y)\right)-p^Y\left(t,S(y),S(y)\right)\right|\\
&\le \sqrt{\frac{2p^Y\left(t,S(y),S(y)\right)}{t}\cdot|S(x)-S(y)|}\\
&=\sqrt{\frac{2p^X\left(t,y,y\right)}{t}\cdot|S(x)-S(y)|}
\end{align*}

Furthermore,
for every $x,y\in \R$ with $|x-y|\le 1$,
\begin{align*}
|S(x)-S(y)|&=\left|\int_x^y e^{W(z)}\,dz\right|\le e^{W(y)} \int_x^y e^{|W(z)-W(y)|}\,dz \le e^{W(y)+\xi(y, |x-y|)}|x-y|\\
&\le e^{W(y)+\Xi(y,1)2^\alpha|x-y|^\alpha}|x-y|,
\end{align*}
where in the last inequality we used \eqref{e3-2}.

Therefore, by
\eqref{p3-1-1} and \eqref{p3-1-4},
for all $t\in (0,1]$ and $x,y\in \R$ with $|x-y|\le 1$,
\begin{align*}
   p^X(t,x,y)
&\ge p^X(t,y,y)-\sqrt{\frac{2p^X\left(t,y,y\right)}{t}\cdot|S(x)-S(y)|}\\
&\ge p^X(t,y,y)-\sqrt{\frac{2p^X\left(t,y,y\right)}{t}\cdot e^{W(y)+2^\alpha\Xi(y,1)|x-y|^\alpha}|x-y|}\\
&\ge c_1t^{-1/2}e^{W(y)}\left(e^{-c_2\xi(y,c_3t^{1/2})}-c_4e^{c_5\left(\xi(y,c_6t^{1/2})+\Xi(y,1)|x-y|^\alpha\right)}\sqrt{t^{-1/2}|x-y|}\right)\\
&\ge c_1t^{-1/2}e^{W(y)}\left(e^{-c_2\Xi(y,c_7)t^{\alpha/2}}-c_4e^{c_5\Xi(y,c_7)\left(t^{\alpha/2+|x-y|^\alpha}\right)}\sqrt{t^{-1/2}|x-y|}\right),
\end{align*}
where in the last inequality we used \eqref{e3-2} again and without loss of generality we can take $c_7\ge1$ large enough.
In particular, there exist  positive  constants $c_{8}$, $c_{9}\in (0,1/2)$ such that
\begin{equation}\label{p3-2-2}
p^X(t,x,y)\ge c_{8}t^{-1/2}e^{W(y)}
\end{equation}
for all $ x,y\in \R$ and $t\in (0,1]$ with
$|x-y|\le 2c_{9}t^{1/2}$ and $0<t\le 2c_{9}\Xi(y,c_7)^{-2/\alpha}.$

For $x,y\in \R$ and $t\in (0,1]$, set
\begin{align*}
& N:=N(t,x,y,\omega):=\left[\max\left\{\frac{t\Upsilon(1+|x|+|y|,4c_7)^{2/\alpha}}{c_{9}},\frac{|x-y|^2}{c_{9}^2 t}\right\}
\right]+1,\\
&x_i:=x+\frac{i (y-x)}{N},\quad 0\le i \le N.
\end{align*}
It is easy to verify that for every $u\in B(x_i,\frac{|x-y|}{2N})$ with $1\le i \le N$ and $t\in (0,1]$,
\begin{equation}\label{p3-2-3}
\begin{split}
&\frac{2|x-y|}{N}\le 2c_{9}\left(\frac{t}{N}\right)^{1/2}\le c_7,\\
&\frac{t}{N} \le 2c_{9}\Upsilon(1+|x|+|y|,4c_7)^{-2/\alpha}\le 2c_{9}\Xi(x_i,2c_7)^{-2/\alpha}\le 2c_{9}\Xi(u,c_7)^{-2/\alpha}, \end{split}
\end{equation}
Therefore, for all $0\le i\le N-1$, $u\in B\left(x_{i},\frac{|x-y|}{2N}\right)$ and $v\in B\left(x_{i+1},\frac{|x-y|}{2N}\right)$,
\begin{align*}
p^X\left(\frac{t}{N},u,v\right)&\ge c_{10}\left(\frac{t}{N}\right)^{-1/2}e^{W(v)},\\
&\ge c_{10}\left(\frac{t}{N}\right)^{-1/2}e^{W(x_{i+1})}e^{-|W(v)-W(x_{i+1})|},\\
&\ge c_{10}\left(\frac{t}{N}\right)^{-1/2}e^{W(x_{i+1})}e^{-\Xi(v,c_7)\left(\frac{2|x-y|}{N}\right)^{\alpha}}\\
&\ge c_{11}\left(\frac{t}{N}\right)^{-1/2}e^{W(x_{i+1})},
\end{align*}
where the first inequality follows from \eqref{p3-2-2}, and in the last inequality we used  \eqref{p3-2-3}.
Hence,
\begin{align*}
&p^X(t,x,y)\\
&=\int_{\R}\cdots\int_{\R}p^X\left(\frac{t}{N},x,y_1\right)\prod_{i=1}^{N-2}p^X\left(\frac{t}{N},y_i,y_{i+1}\right)
p^X\left(\frac{t}{N},y_{N-1},y\right)\prod_{i=1}^{N-1}\mu_X(dy_i)\\
&\ge \int_{B\left(x_1,\frac{|x-y|}{2N}\right)}\cdots \int_{B\left(x_{N-1},\frac{|x-y|}{2N}\right)}
p^X\left(\frac{t}{N},x,y_1\right)\prod_{i=1}^{N-2}p^X\left(\frac{t}{N},y_i,y_{i+1}\right)
p^X\left(\frac{t}{N},y_{N-1},y\right)\prod_{i=1}^{N-1}\mu_X(dy_i)\\
&\ge \prod_{i=1}^{N}\left(c_{11}\left(\frac{t}{N}\right)^{-1/2}e^{W(x_i)}\right)\cdot \prod_{i=1}^{N-1}\mu_X\left(B\left(x_i,\frac{|x-y|}{2N}\right)\right).
\end{align*}
On the other hand, it holds that
\begin{align*}
\mu_X\left(B\left(x_i,\frac{|x-y|}{2N}\right)\right)&=
\int_{x_i-\frac{|x-y|}{2N}}^{x_i+\frac{|x-y|}{2N}}e^{-W(z)}\,dz\\
&\ge e^{-W(x_i)}\int_{x_i-\frac{|x-y|}{2N}}^{x_i+\frac{|x-y|}{2N}}e^{-|W(z)-W(x_i)|}\,dz\\
&\ge \frac{|x-y|}{N}e^{-W(x_i)}e^{-\Xi(x_i,c_7)\left(\frac{2|x-y|}{N}\right)^{\alpha}}\\
&\ge c_{12}\frac{|x-y|}{N}e^{-W(x_i)},
\end{align*}
where the last inequality is due to \eqref{p3-2-3} again.

Therefore, combining with all the  estimates above, we find that for every
$t\in(0,1]$ and $x,y\in \R$
with $|x-y|^2\ge t$,
\begin{align*}
&p^X(t,x,y)\\
&\ge \prod_{i=1}^{N}\left(c_{11}\left(\frac{t}{N}\right)^{-1/2}e^{W(x_i)}\right)\cdot \prod_{i=1}^{N-1}\left(c_{12}\frac{|x-y|}{N}e^{-W(x_i)}\right)\\
&\ge c_{13}t^{-1/2}e^{W(y)}\left(c_{14}\frac{|x-y|}{t^{1/2}N^{1/2}}\right)^{N-1}\\
&\ge c_{13}t^{-1/2}e^{W(y)}\left(\min\left\{\frac{c_{15}|x-y|}{t\Upsilon(1+|x|+|y|,4c_7)^{1/\alpha}}, c_{15}\right\}\right)^{N-1}\\
&\ge c_{13}t^{-1/2}e^{W(y)}\\
&\qquad \times \min\left\{\exp\left(-\frac{c_{16}|x-y|^2}{t}\right), \exp\left(-c_{16}t\Upsilon(1+|x|+|y|,4c_7)^{2/\alpha}\log\left(\frac{t\Upsilon(1+|x|+|y|,4c_7)^{1/\alpha}}{|x-y|}\right)\right)\right\}\\
&\ge c_{13}t^{-1/2}e^{W(y)}\exp\left(-\frac{c_{16}|x-y|^2}{t}-c_{16}t\Upsilon(1+|x|+|y|, 4c_7)^{2/\alpha}\log\left(\frac{t\Upsilon(1+|x|+|y|,4c_7)^{1/\alpha}}{|x-y|}\right)\right)\\
&\ge c_{13}t^{-1/2}e^{W(y)}\exp\left(-\frac{c_{16}|x-y|^2}{t}-c_{16}t\Upsilon(1+|x|+|y|,4c_7)^{2/\alpha}\log\left(t^{1/2}\Upsilon(1+|x|+|y|,4c_7)^{1/\alpha}\right)\right),
\end{align*}
where the last inequality follows from $|x-y|^2\ge t$. This proves the  desired assertion.
\end{proof}

To obtain upper bounds of off-diagonal estimates for $p^X(t,x,y)$ for small time, we will make use of the mean of the exit time for the process $\{X(t)\}_{t\ge 0}$.
\begin{lemma}\label{l3-2}
There exist
positive constants $C_4$ and $C_5$ such that for all $x\in \R$ and $R>0$,
\begin{equation}\label{l3-2-0}
C_4e^{-8\Xi(x,R)R^\alpha}R^2\le \bE^x[\tau_{B(x,R)}^X]\le C_5
e^{4\Xi(x,R)R^\alpha}R^2.
\end{equation}
\end{lemma}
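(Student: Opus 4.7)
The plan is to reduce to a computation for the time-changed process $Y$, exploit the explicit 1D Green function for Brownian motion on an interval, and then convert measure- and length-estimates back to the original scale using the oscillation bound $\xi(x,R)\le \Xi(x,R)R^\alpha$ from \eqref{e3-2}. First, since $X(t)=S^{-1}(Y(t))$ and $S$ is strictly increasing, $\tau^X_{B(x,R)}=\tau^Y_D$ where $D:=(S(x)-R_-,S(x)+R_+)$ with $R_{\pm}:=\pm(S(x\pm R)-S(x))=\int_x^{x\pm R}e^{W(z)}\,dz$. The oscillation control on $[x-R,x+R]$ gives at once
\begin{equation*}
Re^{W(x)-\Xi(x,R)R^\alpha}\le R_{\pm}\le Re^{W(x)+\Xi(x,R)R^\alpha}.
\end{equation*}
By invariance of Green functions under time-change (as already used in the proof of Lemma \ref{l2-3}),
\begin{equation*}
\bE^x[\tau^X_{B(x,R)}]=\bE^{S(x)}[\tau^Y_D]=\int_D G^B_D(S(x),y)\,\mu_Y(dy),
\end{equation*}
where $G^B_D$ denotes the Brownian Green function on the interval $D$, which admits the explicit form $G^B_D(S(x),y)=\frac{2(b-y)R_-}{R_++R_-}$ for $y\ge S(x)$ and $\frac{2(y-a)R_+}{R_++R_-}$ for $y\le S(x)$, with $a=S(x)-R_-$, $b=S(x)+R_+$.

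For the upper bound, the crude bound $G^B_D(S(x),y)\le \frac{2R_+R_-}{R_++R_-}\le\min(R_+,R_-)\le Re^{W(x)+\Xi(x,R)R^\alpha}$ combined with the change of variables $y=S(z)$ in $\mu_Y$, giving $\mu_Y(D)=\int_{x-R}^{x+R}e^{-W(z)}\,dz\le 2Re^{-W(x)+\Xi(x,R)R^\alpha}$, yields $\bE^x[\tau^X_{B(x,R)}]\le 2R^2 e^{2\Xi(x,R)R^\alpha}$, which is stronger than the claimed upper bound.

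For the lower bound, I restrict the Green function integral to the centered sub-interval $I:=(S(x)-r_1,S(x)+r_1)$ with $r_1:=\min(R_+,R_-)/2$. A direct inspection of the explicit $G^B_D$ shows $G^B_D(S(x),y)\ge r_1$ for $y\in I$, and $r_1\ge (R/2)e^{W(x)-\Xi(x,R)R^\alpha}$. For the measure, changing variables back and using \eqref{e3-1} together with the oscillation bound on $[x-R,x+R]$ (which contains the integration interval since $\delta_\pm(x,r_1)\le R$), one obtains $\delta_{\pm}(x,r_1)\ge r_1 e^{-W(x)-\Xi(x,R)R^\alpha}$, so that
\begin{equation*}
\mu_Y(I)=\int_{x-\delta_-(x,r_1)}^{x+\delta_+(x,r_1)}e^{-W(z)}\,dz\ge 2r_1 e^{-2W(x)-2\Xi(x,R)R^\alpha}\ge R e^{-W(x)-3\Xi(x,R)R^\alpha}.
\end{equation*}
Multiplying the two lower bounds gives $\bE^x[\tau^X_{B(x,R)}]\ge (R^2/2)e^{-4\Xi(x,R)R^\alpha}$, which is again stronger than the stated lower bound.

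The only subtlety is that the image interval $D$ is \emph{not} centered at $S(x)$, so a naive use of Lemma \ref{l2-3} (which only controls symmetric balls) would force the integration to escape $[x-R,x+R]$ on one side, where $\Xi(x,R)$ gives no control. Circumventing this via the explicit 1D Brownian Green function on the asymmetric interval $D$ is what keeps all oscillation quantities confined to $[x-R,x+R]$, and is the only step that requires any thought.
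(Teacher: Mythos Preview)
Your approach is essentially the same as the paper's: both reduce to the explicit 1D Brownian Green function on the asymmetric interval $D=(S(x-R),S(x+R))$ and control everything via the oscillation $\xi(x,R)\le\Xi(x,R)R^\alpha$. The paper performs the change of variables $y=S(z)$ at the outset and estimates the integral $\int_{x-R}^{x+R}G^B_D(S(x),S(z))e^{-W(z)}\,dz$ directly, while you keep $\mu_Y$ and split into a Green-function bound times a measure bound; this is only an organizational difference.

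One slip to fix: the inequality $\dfrac{2R_+R_-}{R_++R_-}\le\min(R_+,R_-)$ is false (the harmonic mean lies between $\min$ and $\max$; e.g.\ $R_+=1$, $R_-=3$ gives $3/2>1$). Replace $\min$ by $\max$ and the argument goes through unchanged, since you already have $\max(R_+,R_-)\le Re^{W(x)+\Xi(x,R)R^\alpha}$. With this correction your upper bound reads $\bE^x[\tau^X_{B(x,R)}]\le 2R^2e^{2\Xi(x,R)R^\alpha}$, still sharper than the stated $C_5e^{4\Xi(x,R)R^\alpha}R^2$. The lower bound computation is clean and likewise yields a sharper exponent ($-4$ versus the paper's $-8$).
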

\begin{proof}
Since $X(t)=S^{-1}(Y(t))$,  for all $x\in \R$ and $R>0$,
$
\tau^X_{B(x,R)}=\tau^Y_{(S(x-R),S(x+R))}.
$
Hence,
\begin{align*}
\bE^x[\tau_{B(x,R)}^X]&=
\bE^{S(x)}[\tau^Y_{(S(x-R),S(x+R))}]\\
&=\int_{S(x-R)}^{S(x+R)}G^Y_{\left(S(x-R),S(x+R)\right)}\left(S(x),y\right)\,\mu_Y(dy)\\
&=\int_{x-R}^{x+R}G^B_{\left(S(x-R),S(x+R)\right)}\left(S(x),S(z)\right)e^{-W(z)}\,dz,
\end{align*}
where $G_D^Y(x,y)$ and $G_D^B(x,y)$ denote the Green function on the domain $D \subset \R$ associated with
the process $\{Y(t)\}_{t\ge 0}$ and Brownian motion $\{B(t)\}_{t\ge 0}$ respectively, and in the last equality we used the change of variable $y=S(z)$ and the fact that $G_D^Y(x,y)=G_D^B(x,y)$ for every $(x,y)\in D \times D$.

It holds that for every $z\in [x-R,x+R]$,
$$
S(x+R)-S(z)=\int_{z}^{x+R}e^{W(y)}\,dy\ge
e^{W(z)}\int_{z}^{x+R}e^{-|W(y)-W(z)|}\,dy \ge (x+R-z)e^{W(z)}e^{-\xi(x,R)},
$$
where in the last inequality we used  \eqref{brownian holder1}. Similarly,
for every $z\in [x-R,x+R]$,
\begin{align*}
S(x+R)-S(z)&=\int_{z}^{x+R}e^{W(y)}\,dy\le
e^{W(z)}\int_{z}^{x+R}e^{|W(y)-W(z)|}\,dy \le (x+R-z)e^{W(z)}e^{\xi(x,R)}.
\end{align*}
 By the same way, we can prove that for all $x\in \R$ and $z\in [x-R,x+R]$,
$$
(z-x+R)e^{W(z)}e^{-\xi(x,R)}\le S(z)-S(x-R)\le (z-x+R)e^{W(z)}e^{\xi(x,R)}.
$$
On the other hand, according to \cite[p.\ 45]{CZ},   for every $a<b$,
$$
G^B_{(a,b)}(y,z)=
\begin{cases}
 2(b-a)^{-1}\left(y-a\right)\left(b-z\right),&\quad a<y<z<b,\\
 2(b-a)^{-1}\left(b-y\right)\left(z-a\right),&\quad a<z\le y<b.
\end{cases}
$$
Therefore, for every $x\in \R$ and $R>0$,
\begin{align*}
\bE^x[\tau_{B(x,R)}^X]& =\int_{x-R}^{x+R}G^B_{\left(S(x-R),S(x+R)\right)}\left(S(x),S(z)\right)e^{-W(z)}\,dz\\
&\ge \frac{2}{S(x+R)-S(x-R)}\int_{x}^{x+R}\left(S(x)-S(x-R)\right)\left(S(x+R)-S(z)\right)e^{-W(z)}\,dz\\
&\ge e^{-3\xi(x,R)}\int_x^{x+R}\left(x+R-z\right)e^{W(x)-W(z)}dz
\ge c_1R^2e^{-4\xi(x,R)}\ge c_1R^2e^{-8\Xi(x,R)R^\alpha},
\end{align*}
where in the third inequality we have used
\begin{align*}
e^{W(x)-W(z)}\ge e^{-|W(x)-W(z)|}\ge e^{-\xi(x,R)},\quad z\in (x,x+R),
\end{align*}
and the last inequality follows from \eqref{e3-2}.
Thus, we prove the first inequality in \eqref{l3-2-0}.

Meanwhile, according to all the estimates above, we find that for all $x\in \R$ and $R>0$,
\begin{align*}
 \bE^x [\tau_{B(x,R)}^X ]
&=\int_{x-R}^{x+R}G^B_{\left(S(x-R),S(x+R)\right)}\left(S(x),S(z)\right)e^{-W(z)}\,dz\\
&\le c_2\int_{x-R}^{x+R}\max\left\{\left|S(x+R)-S(x)\right|, \left|S(x-R)-S(x)\right|\right\}e^{-W(z)}\,dz\\
&\le c_2R\int_{x-R}^{x+R}e^{W(x)-W(z)}e^{\xi(x,R)}\,dz\le
c_3R^2e^{4\Xi(x,R)R^\alpha},
\end{align*}
where in the second inequality we have used the fact that
$$
\max\left\{|S(x+R)-S(x)|,|S(x-R)-S(x)|\right\}\le Re^{W(x)}e^{\xi(x,R)},
$$
and the last inequality follows from \eqref{e3-2}. This proves the second inequality in \eqref{l3-2-0}.
\end{proof}

\begin{lemma}\label{l3-3}
There exist constants $C_6\in (0,1/2)$ and $C_7>0$ such that for all $x\in \R$, $t>0$ and $R>0$,
\begin{equation}\label{l3-3-1}
\bP^x(\tau^X_{B(x,R)}\le t)
\le 1-C_6e^{-16\Xi(x,4R)R^\alpha}+\frac{C_7 e^{-8\Xi(x,4R)R^\alpha}t}{R^2}.
\end{equation}
\end{lemma}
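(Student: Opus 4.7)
The plan is to combine the Markov property of $X$ applied at the deterministic time $t$ with the two-sided mean exit-time bounds of Lemma~\ref{l3-2}. Writing $\tau := \tau^X_{B(x,R)}$, the standard decomposition $\tau = (\tau \wedge t) + (\tau - t)^+$ followed by the Markov property at $t$ on the event $\{\tau > t\}$ (where $X(t) \in B(x,R)$) will give
$$
\bE^x[\tau] \le t + \bP^x(\tau > t) \cdot \sup_{y \in B(x,R)} \bE^y[\tau^X_{B(x,R)}].
$$

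The first key step is to bound the supremum on the right. For $y \in B(x,R)$ I would use the inclusion $B(x,R) \subset B(y,2R)$, which gives $\tau^X_{B(x,R)} \le \tau^X_{B(y,2R)}$ under $\bP^y$, and then apply the upper bound in \eqref{l3-2-0} to get $\bE^y[\tau^X_{B(y,2R)}] \le C_5 (2R)^2 e^{4 \Xi(y,2R)(2R)^\alpha}$. Because $[y-2R,y+2R]\subset[x-3R,x+3R]\subset[x-4R,x+4R]$ whenever $|y-x|\le R$, monotonicity of $\Xi$ in its second argument yields $\Xi(y,2R)\le \Xi(x,4R)$, and $\alpha<1/2$ gives $4\cdot 2^\alpha < 8$, so the supremum is dominated by $4C_5 R^2 e^{8\Xi(x,4R)R^\alpha}$.

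The second key step is the corresponding lower bound $\bE^x[\tau] \ge C_4 R^2 e^{-8\Xi(x,R) R^\alpha} \ge C_4 R^2 e^{-8\Xi(x,4R) R^\alpha}$, again from \eqref{l3-2-0} combined with the monotonicity of $\Xi$. Feeding both estimates into the Markov-type inequality and solving for $\bP^x(\tau>t)$ will produce
$$
\bP^x(\tau > t) \ge \frac{C_4}{4C_5}\, e^{-16\Xi(x,4R) R^\alpha} - \frac{e^{-8\Xi(x,4R) R^\alpha}\, t}{4 C_5 R^2},
$$
which passes to \eqref{l3-3-1} by taking complements, with $C_6 := \min\{C_4/(4C_5),\,1/2\}$ (the cap keeps $C_6 < 1/2$ as required, and replacing the subtracted term by a smaller one only weakens the bound) and $C_7 := 1/(4C_5)$.

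The argument is essentially a routine Markov-type bound once Lemma~\ref{l3-2} is at hand; the only point requiring care is the bookkeeping of the ranges in $\Xi$, so that both the upper bound on $\sup_{y \in B(x,R)} \bE^y[\tau^X_{B(x,R)}]$ and the lower bound on $\bE^x[\tau]$ can simultaneously be expressed in terms of the single quantity $\Xi(x,4R)$ appearing in the statement. No substantive analytical obstacle arises beyond the two-sided control of $\bE^x[\tau^X_{B(x,R)}]$ already established in Lemma~\ref{l3-2}.
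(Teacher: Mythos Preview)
Your proposal is correct and follows essentially the same route as the paper's proof: both apply the Markov property at time $t$ to obtain $\bE^x[\tau]\le t+\bP^x(\tau>t)\sup_{z\in B(x,R)}\bE^z[\tau^X_{B(x,R)}]$, bound the supremum via $B(x,R)\subset B(z,2R)$ together with the upper estimate in Lemma~\ref{l3-2} and the monotonicity $\Xi(z,2R)\le\Xi(x,4R)$, insert the lower estimate of Lemma~\ref{l3-2} for $\bE^x[\tau]$, and then solve for $\bP^x(\tau>t)$. Your bookkeeping of the $\Xi$-ranges and the exponent $4\cdot 2^\alpha<8$ matches the paper's, and the final capping of $C_6$ below $1/2$ is a harmless weakening of the bound.
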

\begin{proof}
According to \eqref{l3-2-0},  for every $x\in \R$ and $R>0$,
\begin{align*}
c_1 e^{-8\Xi(x,R)R^\alpha}R^2\leq& \bE^x[\tau^X_{B(x,R)}]\leq t+\bE^x [\I_{\{\tau^X_{B(x,R)}>t\}}\bE^{X(t)}[\tau^X_{B(x,R)}] ]
\\\leq&t+c_2e^{8\Xi(x,4R)R^\alpha}R^2\bP^x(\tau^X_{B(x,R)}>t).
\end{align*}
Here in the last inequality we have used the fact
\begin{align*}
\bE^z[\tau^X_{B(x,R)}]\le \bE^z[\tau^X_{B(z,2R)}]\le
c_2e^{4\Xi(z,2R)(2R)^\alpha}R^2\le c_2e^{8\Xi(x,4R)R^\alpha}R^2,
\quad z\in B(x,R).
\end{align*}
Hence,
$$
\bP^x(\tau^X_{B(x,R)}>t)\geq \frac{c_1e^{-8\Xi(x,R)R^\alpha}R^2-t}{c_2 e^{8\Xi(x,4R)R^\alpha}R^2}=\frac{c_1e^{-16\Xi(x,4R)R^\alpha}}{c_2}
-\frac{te^{-8\Xi(x,4R)R^\alpha}}{c_2R^2}.
$$
So \eqref{l3-3-1} is true.
\end{proof}

We also need the following elementary lemma, see \cite[Lemma 3.6]{B} or \cite[Lemma 1.1]{BB}.

\begin{lemma}\label{l2-5}
Let $\eta_1,\eta_2,\cdots,\eta_n$ and $\Phi$ be non-negative random variables on  some probability space
$(\Omega_0,\mathscr{F}_0,\mathbf{P})$
such that $\Phi\ge \sum_{i=1}^n \eta_i$. Suppose that there exist positive constants
$a\in (0,1)$ and $b>0$ such that
$$
\mathbf{P}\left(\eta_i\le t|\sigma\{\eta_1,\cdots,\eta_{i-1}\}\right)\le a+bt,\quad t>0,\ 1\le i \le n,
$$ where $\eta_0=0$.
Then
$$
\log \bP\left(\Phi\le t\right)\le 2\left(\frac{bnt}{a}\right)^{1/2}-n\log \frac{1}{a},\quad t>0.
$$
\end{lemma}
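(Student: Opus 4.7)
The plan is to apply an exponential Chernoff-type argument to $\sum_{i=1}^n \eta_i$, exploiting the inclusion $\{\Phi\le t\}\subset\{\sum_{i=1}^n \eta_i\le t\}$ and then optimizing the exponential parameter.

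First I would note that, for any $\lambda>0$ and any non-negative random variable $\eta$ on $(\Omega_0,\mathscr{F}_0,\mathbf{P})$, using the tail formula and the substitution $s=e^{-\lambda t}$,
\begin{equation*}
\mathbf{E}\bigl[e^{-\lambda \eta}\bigm|\mathscr{G}\bigr]
=\lambda\int_0^\infty e^{-\lambda t}\,\mathbf{P}(\eta\le t\mid\mathscr{G})\,dt
\end{equation*}
for any sub-$\sigma$-field $\mathscr{G}\subset\mathscr{F}_0$. Applying this to $\eta=\eta_i$ with $\mathscr{G}=\sigma\{\eta_1,\dots,\eta_{i-1}\}$ and inserting the hypothesis $\mathbf{P}(\eta_i\le t\mid \mathscr{G})\le a+bt$, I would obtain
\begin{equation*}
\mathbf{E}\bigl[e^{-\lambda \eta_i}\bigm|\sigma\{\eta_1,\dots,\eta_{i-1}\}\bigr]
\le \lambda\int_0^\infty e^{-\lambda t}(a+bt)\,dt
=a+\frac{b}{\lambda}.
\end{equation*}

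Second, iterating this conditional estimate via the tower property yields
\begin{equation*}
\mathbf{E}\bigl[e^{-\lambda \sum_{i=1}^n\eta_i}\bigr]\le \left(a+\frac{b}{\lambda}\right)^n,
\end{equation*}
and the exponential Markov inequality, together with $\Phi\ge\sum_{i=1}^n\eta_i$, gives
\begin{equation*}
\log\mathbf{P}(\Phi\le t)\le \log\mathbf{P}\!\left(\sum_{i=1}^n\eta_i\le t\right)
\le \lambda t+n\log\!\left(a+\frac{b}{\lambda}\right).
\end{equation*}

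Third, I would rewrite $n\log(a+b/\lambda)=-n\log(1/a)+n\log(1+b/(a\lambda))$ and use $\log(1+u)\le u$ to bound it by $-n\log(1/a)+nb/(a\lambda)$, giving
\begin{equation*}
\log\mathbf{P}(\Phi\le t)\le -n\log\frac{1}{a}+\lambda t+\frac{nb}{a\lambda}.
\end{equation*}
Finally, optimizing in $\lambda>0$ by taking $\lambda=\sqrt{nb/(at)}$ makes the two terms $\lambda t$ and $nb/(a\lambda)$ both equal to $\sqrt{nbt/a}$, yielding the claimed bound
\begin{equation*}
\log\mathbf{P}(\Phi\le t)\le 2\!\left(\frac{bnt}{a}\right)^{\!1/2}-n\log\frac{1}{a}.
\end{equation*}
There is no genuine obstacle here; the only mild care needed is that the bound $a+bt$ can exceed $1$ for large $t$, but since the true conditional probability is at most $1$, the estimate $\mathbf{P}(\eta_i\le t\mid\mathscr{G})\le a+bt$ remains a valid pointwise upper bound to integrate against $\lambda e^{-\lambda t}$. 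The rest is the standard Cramér–Chernoff optimization and matches the constants in the statement exactly.
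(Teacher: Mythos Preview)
Your proof is correct; this is exactly the standard Laplace-transform/Chernoff argument from the references the paper cites (\cite[Lemma~3.6]{B} and \cite[Lemma~1.1]{BB}), and the paper itself does not give a separate proof but simply invokes those sources.
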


\begin{lemma}\label{l3-5}
There exist  positive constants $C_i$, $8\le i\le 11$, such that for every $x\in \R$, $t\in  (0,1]$ and $R>0$ with
$R^2\ge C_8 t$,
\begin{equation}\label{l3-5-1}
\bP^x\left(\tau_{B(x,R)}^X\le t\right)\le \exp\left(-\frac{C_9R^2}{t}+C_{10}t^{1/2}R^{1/2}\Upsilon(1+R+|x|, C_{11};\omega)^{{3}/({2\alpha})}\right)
\end{equation}
\end{lemma}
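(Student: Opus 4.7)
The plan is to apply the abstract exit-time estimate Lemma~\ref{l2-5} to a chain of successive exits from small balls of radius $r$, to use Lemma~\ref{l3-3} for each individual step, and then to optimize $r$ as a function of $t$, $R$, and $\Upsilon$.

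Fix $x$, $R$, $t$ with $R^2 \ge C_8 t$ (the constant $C_8$ chosen large at the end), and abbreviate $\Upsilon := \Upsilon(1+|x|+R, C_{11}; \omega)$. For a parameter $r \in (0, R/2]$ to be chosen, set $\sigma_0 = 0$ and inductively $\sigma_i = \inf\{s \ge \sigma_{i-1} : |X(s) - X(\sigma_{i-1})| \ge r\}$, put $\eta_i = \sigma_i - \sigma_{i-1}$, and let $n = \lfloor R/r \rfloor$. Since $|X(\sigma_n) - x| \le nr \le R$, one has $\tau^X_{B(x,R)} \ge \sum_{i=1}^n \eta_i$, hence $\{\tau^X_{B(x,R)} \le t\} \subset \{\sum_i \eta_i \le t\}$. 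By the strong Markov property and Lemma~\ref{l3-3} applied to the ball of radius $r$ about $X(\sigma_{i-1})$, combined with the uniform estimate $\Xi(X(\sigma_{i-1}), 4r) \le \Upsilon$ (valid once $4r \le C_{11}$), one obtains, uniformly in $i$,
\[
\bP^x\bigl(\eta_i \le s \,\bigm|\, \mathcal{F}_{\sigma_{i-1}}\bigr) \;\le\; 1 - C_6 e^{-16 \Upsilon r^{\alpha}} + \frac{C_7}{r^{2}}\,s.
\]
Lemma~\ref{l2-5} with $a = 1 - C_6 e^{-16 \Upsilon r^{\alpha}}$ and $b = C_7/r^{2}$, together with $-\log(1-u) \ge u$, then yields
\[
\log \bP^x(\tau^X_{B(x,R)} \le t) \;\le\; c_1 \sqrt{Rt/r^{3}} \;-\; C_6\, e^{-16 \Upsilon r^{\alpha}}\,\frac{R}{r}
\]
for some absolute $c_1 > 0$.

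The final step is to choose $r = \min\{c_{3} t/R,\, K^{1/\alpha}\,\Upsilon^{-1/\alpha}\}$, where $K \in (0, 1]$ is a small fixed absolute constant (so that $\Upsilon r^{\alpha} \le K$, and hence $e^{-16\Upsilon r^{\alpha}} \ge e^{-16K} =: \rho$) and $c_{3}$ is a large absolute constant chosen so that $c_{3}^{1/2}\rho \ge 2 c_1$. In the regime $c_{3} t/R \le K^{1/\alpha}\Upsilon^{-1/\alpha}$ (Case A), one has $r = c_{3} t/R$, and the displayed bound simplifies to $(R^{2}/(c_{3} t))[c_1/c_{3}^{1/2} - \rho] \le -\rho R^{2}/(2 c_{3} t)$, a pure Gaussian decay. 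In the complementary regime $r = K^{1/\alpha}\Upsilon^{-1/\alpha}$ (Case B), direct substitution gives
\[
\log \bP \;\le\; c_{1} K^{-3/(2\alpha)}\,t^{1/2}R^{1/2}\,\Upsilon^{3/(2\alpha)} \;-\; \rho K^{-1/\alpha}\,R\,\Upsilon^{1/\alpha},
\]
and the defining Case B inequality $K^{1/\alpha}\Upsilon^{-1/\alpha} < c_{3} t/R$ rearranges to $K^{-1/\alpha}R\, \Upsilon^{1/\alpha} > R^{2}/(c_{3} t)$, so the subtracted term majorizes $(\rho/c_{3})\, R^{2}/t$. Merging the two cases (using $t^{1/2}R^{1/2}\Upsilon^{3/(2\alpha)} \ge 0$ in Case A) gives the claimed bound with $C_{9} = \rho/(2c_{3})$ and $C_{10} = c_{1} K^{-3/(2\alpha)}$.

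The main technical obstacle is the tension between two competing demands on $r$: on the one hand, $r$ must be small relative to $\Upsilon^{-1/\alpha}$ so that the random H\"older coefficient $\Xi(X(\sigma_{i-1}), 4r)$ appearing in Lemma~\ref{l3-3} can be replaced by the uniform global bound $\Upsilon$ without crippling exponential loss; on the other hand, $r$ should be proportional to $t/R$ so that Lemma~\ref{l2-5} actually produces the Gaussian exponent $R^{2}/t$. Decoupling the two constants $K$ (controlling $\Upsilon r^{\alpha}$) and $c_{3}$ (controlling the Gaussian constant) in the minimum defining $r$ is what makes the argument succeed for every $\alpha \in (0, 1/2)$; a single common constant would force one to control $c_{3}^{1/2} e^{-16 c_{3}^{\alpha}}$, whose maximum over $c_{3}$ can be too small when $\alpha$ is close to $1/2$. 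The standing hypothesis $R^{2} \ge C_{8} t$, together with $t \le 1$, is then used to verify $n \ge 1$ and $4r \le C_{11}$ in both regimes, which ultimately fixes $C_{8}$ and $C_{11}$ as large absolute constants depending on $c_{3}$ and $K$.
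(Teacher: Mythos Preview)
Your proof is correct and follows essentially the same route as the paper's: a chain of successive exits from balls of radius $r=R/N$, each step controlled by Lemma~\ref{l3-3}, fed into Lemma~\ref{l2-5}, with the step size chosen as the minimum of the Gaussian scale $\sim t/R$ and the H\"older scale $\sim \Upsilon^{-1/\alpha}$ (the paper parametrizes by $N=\max\{[c_0R^2/t],[R\Upsilon^{1/\alpha}]\}+1$, which is the same choice). The only cosmetic difference is that the paper first simplifies Lemma~\ref{l3-3} to a constant $a=c_1\in(0,1)$ by imposing $r\le \Upsilon^{-1/\alpha}$ up front, whereas you carry the full $a=1-C_6 e^{-16\Upsilon r^\alpha}$ through and bound it afterwards; your closing remark about decoupling $K$ and $c_3$ correctly identifies why the two scales must be tuned independently, which the paper handles implicitly through its separate choice of $c_0$.
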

\begin{proof}
According to \eqref{l3-3-1} and   \eqref{e3-3}, there are
positive constants $c_1\in (0,1)$ and $c_2, c_3$ such that
\begin{equation}\label{l3-5-2a}
\bP^z (\tau^X_{B(z,r)}\le t )\le c_1+\frac{c_2t}{r^2},\quad z\in B(0,1+|x|),\ r\le \min\{\Upsilon(1+|x|,c_3)^{-1/\alpha},1\}.
\end{equation}
Throughout the proof below, we always write $\Upsilon(1+|x|,c_3)$ as $\Upsilon(1+|x|)$ for the simplicity of notation.

Let $R^2\ge C_8 t$, and set
\begin{equation*}
\begin{split}
N=N(R,t):=\max\left\{\left[\frac{c_0R^2}{t}\right],\left[R\Upsilon(1+R+|x|)^{1/\alpha}\right]\right\}+1,
\end{split}
\end{equation*}
where $c_0$ and $C_8$ are positive constants  to be determined later.
Define
\begin{align*}
\tau_0:=0,\,\, \tau_{i}:=\inf \{t>\tau_{i-1}: |X(\tau_{i-1})-X(t) |=R/{N} \},\,\, \eta_i:=\tau_i-\tau_{i-1},\quad 1\le i \le N.
\end{align*}
Note that under $\bP^x$
\begin{align*}
|X(\tau_{N})-x|&\le \sum_{i=1}^{N}\left|X(\tau_{i})-X(\tau_{i-1})\right|\le \sum_{i=1}^N \frac{R}{N}=R,
\end{align*}
so   $\tau_{B(x,R)}^X\ge \tau_N=\sum_{i=1}^N \eta_i$.
Then, by the strong Markov property of the process $\{X(t)\}_{t\ge 0}$,
for every $1\le i \le N$ and $t>0$,
\begin{align*}
\bP^x\left(\eta_i\le t|\sigma\left\{\eta_1,\cdots,\eta_{i-1}\right\}\right)&=\bE^x\left[\bP^{X(\tau_{i-1})}\left(\tau_{B\left(X(\tau_{i-1}),\frac{R}{N}\right)}^X\le t\right)\right]\\
&\le \sup_{y\in B(x,R)}\bP^y\left(\tau_{B\left(y,\frac{R}{N}\right)}^X\le t\right)\\
&\le c_1+\frac{c_2N^2t}{R^2}.
\end{align*}
Here the last inequality follows from
\eqref{l3-5-2a} and
the fact that
\begin{align*}
\frac{R}{N}
&\le \min\left\{\Upsilon(1+|x|+R)^{-1/\alpha},\frac{t}{c_0R}\right\}
\le \min\left\{\Upsilon(1+|x|+R)^{-1/\alpha},\frac{t^{1/2}}{C_8^{1/2}c_0}\right\}\\
&
\le \min\left\{\Upsilon(1+|x|+R)^{-1/\alpha},1\right\},
\end{align*} where in the last equality we take $C_8=c_0^{-2}$.

Therefore, applying Lemma \ref{l2-5} with $n=N$, $a=c_1$ and $b=\frac{c_2N^2}{R^2}$, we derive that
\begin{equation}\label{l3-5-2}
\log \bP^x(\tau^X_{B(x,R)}\le t)
 \le \frac{c_4N^{3/2}t^{1/2}}{R}-N\log\left(\frac{1}{c_1}\right)\le -N\left(c_5-\frac{c_4N^{1/2}t^{1/2}}{R}\right).\\
\end{equation}
Now, choose $c_0=\frac{1}{4}\left(\frac{c_5}{c_4}\right)^2$ in the definition of $N$,  and we deduce that
\begin{align*}
\frac{c_4N^{1/2}t^{1/2}}{R} -c_5\le
\begin{cases}
-\frac{c_5}{2}\ &{\rm if}\  \frac{c_0R^2}{t}\le R\Upsilon(1+R+|x|)^{1/\alpha},\\
\frac{c_6t^{1/2}\Upsilon(1+R+|x|)^{\frac{1}{2\alpha}}}{R^{1/2}}-c_5\ &{\rm if}\ \frac{c_0R^2}{t}\ge R\Upsilon(1+R+|x|)^{1/\alpha}.
\end{cases}
\end{align*}
Hence, putting this into \eqref{l3-5-2}, we can prove the  conclusion \eqref{l3-5-1}.
\end{proof}

\begin{proposition}\label{p3-3}
There exist   positive constants $C_{i}$, $12\le i \le 15$, such that
for every $x,y\in \R$ and $t>0$ with $t\le |x-y|^2$ and for almost surely all $\om\in \Omega$,
\begin{equation}\label{p3-3-1}
  p^X(t,x,y)\le C_{12}t^{-1/2}e^{W(y)}
\exp\Big(-\frac{C_{13}|x-y|^2}{2t}+C_{14}t^{3}\Upsilon(1+|x|+|y|,C_{15};\om)^{{6}/{\alpha}}\Big).
\end{equation}
\end{proposition}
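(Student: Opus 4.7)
The plan is to combine the exit-time tail estimate of Lemma~\ref{l3-5} with the on-diagonal upper bound of Proposition~\ref{c3-1} via a Chapman--Kolmogorov/Cauchy--Schwarz decomposition, arguing by hand since the failure of volume doubling for $\mu_X$ precludes appealing to the standard off-diagonal Gaussian upper bound machinery (Davies--Gaffney, Grigoryan, Varopoulos--Saloff-Coste, etc.) directly.

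Concretely, I would start from $p^X(t,x,y)=\int_\R p^X(t/2,x,z)\,p^X(t/2,z,y)\,\mu_X(dz)$ and split the integral into $A_1:=\{z:|z-x|\le|z-y|\}$ and $A_2:=A_1^c$. In one dimension this is the cut at the midpoint $(x+y)/2$, so $z\in A_2$ automatically satisfies $|z-x|\ge|x-y|/2$ and $z\in A_1$ satisfies $|z-y|\ge|x-y|/2$. Cauchy--Schwarz on the $A_2$-piece gives
\[
\int_{A_2} p^X(t/2,x,z)\,p^X(t/2,z,y)\,\mu_X(dz)\le p^X(t,y,y)^{1/2}\Bigl(\int_{A_2}p^X(t/2,x,z)^2\mu_X(dz)\Bigr)^{1/2},
\]
and the remaining $L^2$-integral is bounded by $\|p^X(t/2,x,\cdot)\|_{L^\infty(A_2)}\cdot\bP^x(\tau^X_{B(x,|x-y|/2)}\le t/2)$, since $A_2\subset\{|z-x|\ge|x-y|/2\}$ forces the process starting at $x$ to exit $B(x,|x-y|/2)$. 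The $A_1$-piece is handled symmetrically, interchanging $x$ and $y$ and using $p^X(t,x,y)=p^X(t,y,x)$.

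Feeding in Proposition~\ref{c3-1} controls $p^X(t,y,y)\le C t^{-1/2}e^{W(y)}$ times an $\Upsilon$-correction, and after comparing $e^{W(x)}$ with $e^{W(y)}$ via the H\"older estimate~\eqref{e3-2} (introducing an extra factor $\exp(C\Upsilon(1+|x|+|y|)|x-y|^\alpha)$) produces the prefactor $C t^{-1/2}e^{W(y)}$ of the proposition. Lemma~\ref{l3-5} furnishes the Gaussian factor $\exp(-C_9|x-y|^2/(2t)+C_{10}t^{1/2}|x-y|^{1/2}\Upsilon^{3/(2\alpha)})$. The several $\Upsilon$-dependent correction terms (one from Proposition~\ref{c3-1}, one from Lemma~\ref{l3-5}, and one from the H\"older comparison of exponential weights) are then absorbed into the leading Gaussian $-C_9|x-y|^2/t$ via Young's inequality with conjugate exponents tailored to each correction, halving the Gaussian coefficient to at most $C_{13}/2$ and leaving only a residual that depends on $t$ and $\Upsilon$; using $t\le 1$ to collect lower $t$-powers, this residual can be bounded by the single expression $C_{14}t^3\Upsilon^{6/\alpha}$.

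The main obstacle will be twofold: first, the precise Young-inequality bookkeeping needed to make the three $\Upsilon$-corrections collapse into the single clean residual of the stated form while keeping the Gaussian coefficient $\ge C_{13}/2$; and second, controlling $\|p^X(t/2,x,\cdot)\|_{L^\infty(A_2)}$ over the unbounded half-line $A_2$, where the on-diagonal upper bound $p^X(t,z,z)\le C t^{-1/2}e^{W(z)}$ times an $\Upsilon$-factor can blow up as $|z|\to\infty$ through $e^{W(z)}$. This will be handled by truncating to a bounded neighborhood of the segment $[x,y]$, on which $\Upsilon(1+|x|+|y|)$ uniformly controls the $W$-variation via H\"older continuity, and separately estimating the complementary tail via Lemma~\ref{l3-5} applied with radius growing to infinity, where the exit probability decays super-polynomially and makes the tail contribution negligible.
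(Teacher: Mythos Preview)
Your plan assembles the right pieces --- the exit-time tail of Lemma~\ref{l3-5}, the on-diagonal bound of Proposition~\ref{c3-1}, a midpoint split, the H\"older comparison $e^{W((x+y)/2)}\to e^{W(y)}$, and Young's inequality to clean up the $\Upsilon$-debris --- and in that sense matches the paper. The difference is in \emph{how} the midpoint split is executed, and your version creates a difficulty that the paper's version simply does not have.

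You decompose via Chapman--Kolmogorov and Cauchy--Schwarz in the spatial variable, which forces you to control $\sup_{z\in A_2}p^X(t/2,x,z)$ (or an equivalent $L^\infty$-type quantity) over the unbounded half-line $A_2$. As you correctly flag, this sup is not controlled by Proposition~\ref{c3-1}: the on-diagonal bound carries the factor $e^{W(z)}$, which is unbounded in $z$. Your proposed fix (truncate to a bounded window around $[x,y]$ and kill the tail by Lemma~\ref{l3-5} with growing radius) is in principle workable via a dyadic shell argument, but it requires balancing the Gaussian decay of the exit probability against both the growth of $e^{W(z)}$ and the growth of $\Upsilon(1+R+|x|)$ inside Lemma~\ref{l3-5}; this is feasible but tedious, and the resulting constants get worse.

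The paper avoids all of this by decomposing \emph{probabilistically} rather than via Chapman--Kolmogorov. Writing $p^X(t,x,y)=\bE^x[p^X(t/2,X(t/2),y)]$ and splitting on the event $\{X(t/2)\ge (x+y)/2\}$, the paper applies the strong Markov property at $\tau_D^X$, the first exit from the half-line $D=\{z:|z-x|\le|z-y|\}$. The crucial one-dimensional fact is that $X(\tau_D^X)=(x+y)/2$ \emph{exactly}, by path continuity. Hence the bound reduces to
\[
I_1\le \bP^x\bigl(\tau^X_{B(x,|x-y|/2)}\le t/2\bigr)\cdot \sup_{s\in[0,t/2]} p^X\bigl(t-s,\tfrac{x+y}{2},y\bigr),
\]
and the supremum is over \emph{time} at a \emph{single fixed spatial point}. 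Now Cauchy--Schwarz plus monotonicity of $t\mapsto p^X(t,\cdot,\cdot)$ give $p^X(t-s,(x+y)/2,y)\le p^X(t/2,(x+y)/2,(x+y)/2)^{1/2}p^X(t/2,y,y)^{1/2}$, and Proposition~\ref{c3-1} applies directly at the two fixed points $(x+y)/2$ and $y$. No $L^\infty$ over a half-line, no truncation, no tail sum. From there the rest of your outline (H\"older comparison, Young's inequality bookkeeping) matches the paper essentially line for line.

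So: keep your ingredients, but replace the spatial Chapman--Kolmogorov split by the strong-Markov-at-the-midpoint split. That is the one missing idea.
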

\begin{proof}
By the symmetry property of $p^X(t,x,y)$ with respect to $(x,y)$, we can assume that $x<y$. Set
\begin{align*}
p^X(t,x,y)
&=\bE^x\left[p^X\left(\frac{t}{2},X\left(\frac{t}{2}\right),y\right)\right]\\
&=\bE^x\left[p^X\left(\frac{t}{2},X\left(\frac{t}{2}\right),y\right)\I_{\{X\left(\frac{t}{2}\right)\ge \frac{x+y}{2}\}}\right]
+\bE^x\left[p^X\left(\frac{t}{2},X\left(\frac{t}{2}\right),y\right)\I_{\{X\left(\frac{t}{2}\right)< \frac{x+y}{2}\}}\right]\\
&=:I_1+I_2.
\end{align*}

Let $D:=\{z\in \R:|z-x|\le |z-y|\}$. According to the strong Markov property of the process $\{X(t)\}_{t\ge 0}$, we obtain
\begin{align*}
I_1&\le \bE^x\left[p^X\left(\frac{t}{2},X\left(\frac{t}{2}\right),y\right)\I_{\{\tau_D^X\le t/2\}}\right]\\
&=\bE^x\left[\bE^{X(\tau_D^X)}\left[p^X\left(\frac{t}{2},X\left(\frac{t}{2}-\tau_D^X\right),y\right)\right]\I_{\{\tau_D^X\le t/2\}}\right]\\
&=\bE^x\left[\bE^{\frac{x+y}{2}}\left[p^X\left(\frac{t}{2},X\left(\frac{t}{2}-\tau_D^X\right),y\right)\right]\I_{\{\tau_D^X\le t/2\}}\right]\\
&\le \bP^x(\tau_{B(x,|x-y|/2)}^X\le t/2)\cdot\sup_{s\in [0,t/2]}\bE^{\frac{x+y}{2}}\left[p^X\left(\frac{t}{2},X\left(\frac{t}{2}-s\right),y\right)\right]\\
&=\bP^x(\tau_{B(x,|x-y|/2)}^X\le t/2)\cdot\sup_{s\in [0,t/2]}p^X\left(t-s,\frac{x+y}{2},y\right).
\end{align*}
Here in the second equality we  used the fact $X(\tau_D^X)=\frac{x+y}{2}$ since we assume $x<y$,  the second inequality follows from
the fact that $B(x,|x-y|/2)\subset D$, and in the last equality we used the semigroup property of the heat kernel $p^X(t,x,y)$.

According to \eqref{l3-5-1}, it holds that for all $x,y\in \R^d$ and $t>0$ with $|x-y|^2\ge t$,
\begin{align*}
\bP^x\left(\tau_{B(x,|x-y|/2)}^X\le t/2\right)\le c_0\exp\left(-\frac{c_1|x-y|^2}{t}+c_2t^{1/2}|x-y|^{1/2}\Upsilon(1+|x|+|y|,c_3)^{{3}/({2\alpha})}\right).
\end{align*} Indeed, by \eqref{l3-5-1}, the estimate above holds with $c_0=1$ when $(|x-y|/2)^2\ge C_8t;$ when $t\le |x-y|^2\le 4C_8 t$, the estimate above still holds by taking $c_0\ge1$ large enough.
On the other hand,
\begin{align*}
& \sup_{s\in [0,t/2]}p^X\left(t-s,\frac{x+y}{2},y\right)\\
&\le \sup_{s\in [0,t/2]}\left(p^X\left(t-s,\frac{x+y}{2},\frac{x+y}{2}\right)\right)^{1/2}
\cdot \left(p^X\left(t-s,y,y\right)\right)^{1/2}\\
&\le \left(p^X\left(\frac{t}{2},\frac{x+y}{2},\frac{x+y}{2}\right)\right)^{1/2}
\cdot \left(p^X\left(\frac{t}{2},y,y\right)\right)^{1/2}\\
&\le c_4t^{-1/2}\exp\left(\frac{W\left(\frac{x+y}{2}\right)+W(y)}{2}\right)
\cdot \exp\left(c_4t^{\alpha/2}\Upsilon(1+|x|+|y|,c_4)\right).
\end{align*}
Here the second inequality follows from the fact that $t\mapsto p(t,x,x)$ is non-increasing for
every fixed $x\in \R$, and in the last inequality we used \eqref{c3-1-1}.

Putting all the estimates above together, we have
\begin{align*}
I_1&\le c_5t^{-1/2}
\cdot \exp\left(-\frac{c_6|x-y|^2}{t}\right)\exp\left(\frac{W\left(\frac{x+y}{2}\right)+W(y)}{2}\right)\\
&\quad \times  \exp\left(c_5
t^{1/2}|x-y|^{1/2}\Upsilon(1+|x|+|y|,c_5)^{ {3}/({2\alpha})}+c_5t^{\alpha/2}\Upsilon(1+|x|+|y|,c_5)\right).
\end{align*}

According to \eqref{e3-2} and  \eqref{e3-3}, we can obtain that
\begin{align*}
\left|W\left(\frac{x+y}{2}\right)-W(y)\right|&\le \sum_{i=0}^{N-1} \left|W(y_{i+1})-W(y_i)\right|
\le c_7\sum_{i=0}^{N-1}\Xi(y_i,1)|y_{i+1}-y_i|^{\alpha}\\&
\le c_8\max\{|x-y|,|x-y|^\alpha\}\Upsilon(1+|x|+|y|,2),
\end{align*}
where $y_0:=\frac{x+y}{2}$, $y_i:=y_0+\frac{i(y-x)}{2N}$ for $1\le i \le N-1$ with $N:=\left[\frac{y-x}{2}\right]+1$, and
$y_N:=y$.

Hence,
\begin{align*}
 I_1
 &\le c_5t^{-1/2}e^{W(y)}
\cdot \exp\Big(-\frac{c_6|x-y|^2}{t}+c_9t^{1/2}|x-y|^{1/2}\Upsilon(1+|x|+|y|,c_9)^{ {3}/({2\alpha})}\\
&\quad\quad\quad\quad \quad\quad\quad \quad\quad\quad+c_9\left(t^{\alpha/2}+\max\{|x-y|,|x-y|^\alpha\}\right)
 \Upsilon(1+|x|+|y|,c_9)\Big)\\
&\le c_{10}t^{-1/2}e^{W(y)}
\exp\Big(-\frac{c_6|x-y|^2}{2t}+
c_{10}t^{3}
\Upsilon(1+|x|+|y|,c_9)^{ {6}/{\alpha}}\Big).
\end{align*}
Here we have used the property that (due to $|x-y|^2\ge t$)
\begin{align*}
t^{\alpha/2} \Upsilon(1+|x|+|y|,c_9)\le |x-y|^{\alpha}\Upsilon(1+|x|+|y|,c_9)\le c_{11}\left(1+|x-y|^{ {3}/{2}}\Upsilon(1+|x|+|y|,c_9)^{ {3}/({2\alpha})}\right),
\end{align*}
and for every $\varepsilon>0$ there exists a positive constant $c_{12}(\varepsilon)$ so that
\begin{align*}
t^{1/2}|x-y|^{1/2}\Upsilon(1+|x|+|y|,c_9)^{ {3}/({2\alpha})}&\le |x-y|^{{3}/{2}}\Upsilon(1+|x|+|y|,c_9)^{ {3}/({2\alpha})}\\
&\le \frac{\varepsilon|x-y|^2}{t}+c_{12}(\varepsilon)t^3\Upsilon(1+|x|+|y|,c_9)^{ {6}/{\alpha}}
\end{align*}
 and \begin{align*}
|x-y|\Upsilon(1+|x|+|y|,c_9)&\le \frac{\varepsilon|x-y|^2}{t}+c_{12}(\varepsilon)t\Upsilon(1+|x|+|y|,c_9)^{2}\\
&\le \frac{\varepsilon|x-y|^2}{t}+ c_{13}(\varepsilon)\left(1+t^{3/\alpha}\Upsilon(1+|x|+|y|,c_9)^{6/\alpha}\right).
\end{align*}

By the symmetry of $p^X(t,x,y)$, we have
$$I_2=\bE^y\left[p^X\left(\frac{t}{2},X\left(\frac{t}{2}\right),x\right)\I_{\{X\left(\frac{t}{2}\right)<\frac{x+y}{2}\}}\right].$$
Using the expression above and applying the same argument as that for $I_1$ (in particular, changing the position of $x$ and $y$), we can obtain
$$
  I_2\le   c_{14}t^{-1/2}e^{W(y)}
\exp\Big(-\frac{c_{15}|x-y|^2}{2t}+c_{14}t^{3}\Upsilon(1+|x|+|y|)^{ {6}/{\alpha}}\Big).\\
$$

Therefore, according to both estimates for $I_1$ and $I_2$, we can obtain the desired conclusion \eqref{p3-3-1}.
\end{proof}

Now, we are in a position to present the
\begin{proof}[Proof of Theorem $\ref{t1-1}$]

Given $\alpha\in (0,1/2)$, $x\in \R$ and $r>0$, let $$\|f\|_{x,r,\alpha}:=\sup_{s,t\in [x-r,x+r]}\frac{\left|f(s)-f(t)\right|}{|t-s|^{\alpha}}$$ for every
$f\in C([x-r,x+r]; \R)$ such that $f(x)=0$.

Recall that
\begin{align*}
\Xi(x,r;\omega):=\sup_{s,t\in [x-r,x+r]}\frac{\left|W(s)-W(t)\right|}{|t-s|^{\alpha}}=
\|W(\cdot)-W(x)\|_{x,r,\alpha},\quad  \omega\in \Omega,\ x\in \R,\ r>0.
\end{align*}
According to Fernique's theorem (see e.g. \cite[p.\ 159--160]{Ku} or \cite[Theorem 1.2]{FO}) and the stationary property
of $\{W(t)-W(s)\}_{s,t\in \R}$,
for any $r>0$,
we can find
a positive constant $\lambda(r)$ (which only depends on $r$) such that
\begin{align}\label{t1-1-0}
\sup_{x\in \R}\Ee\left[\exp\left(\lambda(r)|\Xi(x,r;\omega)|^2\right)\right]<\infty.
\end{align}
Using \eqref{t1-1-0} and \eqref{e3-3}, and following the argument for \eqref{l2-1-3}, we can prove that, for any $C>0$, there exist a random variable $R_0(\om)>0$ and a constant $c_1>0$ such that
\begin{align}\label{t1-1-1}
|\Upsilon(R, C;\omega)|\le c_1\sqrt{\log(1+|R|)},\quad \omega\in \Omega,\ R>R_0(\omega).
\end{align}
The estimate above also implies that there is a random variable $c_2(\om)>0$ such that
\begin{align}\label{t1-1-2}
|\Upsilon(R, C;\omega)|\le c_2(\omega)\sqrt{\log(1+|R|)},\quad \omega\in \Omega,\ R>0.
\end{align}

Combining \eqref{t1-1-1}, \eqref{t1-1-2} with \eqref{c3-1-1}, \eqref{p3-2-0} and \eqref{p3-3-1}, we then prove the desired two-sided estimates
for $p(t,x,y,\om)$.
\end{proof}

\begin{proof}[Proof of Corollary $\ref{c1-1}$]
For every $c_1>0$, there exists a positive constant $c_2$ such that for all $x\in \R$ and $t\in (0,1]$,
$$
\frac{|x|^2}{t}\ge -c_2+c_1\max\{t^2[\log(2+|x|)]^{2/\alpha}, t^3[\log(2+|x|)]^{3/\alpha}\}.
$$
Combining this with Theorem  \ref{t1-1}, we can obtain the desired conclusion
immediately.
\end{proof}

\section{Annealed heat kernel estimates for large times}\label{section4}
This section is devoted to the proof of Theorem \ref{thm22}. For simplicity, we only prove the assertion for the case that $x=0$. In particular, in this case, $p(t,0,0)=p^X(t,0,0)$ since $W(0,\omega)=0$.

Throughout the proof, for every $x\in \R$, $R>0$ and $\om\in \Omega$, let $V(x,R,\om)$, $V_+(x,R,\om)$, $V_-(x,R,\om)$, $\xi(x, r,\omega)$, $\delta_+(x,R,\om)$ and
$\delta_-(x,R,\om)$ be those defined in previous sections. When $x=0$, for simplicity we write $V(x,R,\om)$, $V_+(x,R,\om)$, $V_-(x,R,\om)$, $\xi(x, r, \omega)$, $\delta_+(x,R,\om)$ and
$\delta_-(x,R,\om)$ as $V(R,\om)$, $V_+(R,\om)$, $V_-(R,\om)$, $\xi(r, \omega)$, $\delta_+(R,\om)$ and $\delta_-(R,\om)$ respectively.
Due to \eqref{l2-1-1}, for every $t>0$ and $\om\in \Omega$, we can define
$R(t,\om)$, $R_+(t,\om)$ and  $R_-(t,\om)$ to be the unique elements in $(0,\infty)$ such that
\begin{equation}\label{e4-1}
 4R(t,\om)V\left(R(t,\om),\om\right)= 4R_+(t,\om)V_+\left(R_+(t,\om),\om\right)=4R_-(t,\om)V_-\left(R_-(t,\om),\om\right)=t.
 \end{equation}

\subsection{Upper bound}
In this part we will prove the following statement.
\begin{proposition}\label{T:UP}
Let $\alpha\in (0,1/2)$ be the constant in \eqref{e3-1a}. Then,
there are constants $C_1>0$ and $T_1>0$ so that for all $t\ge T_1$,
$$\Ee\left[p(t,0,0)\right]\leq \frac{C_1(\log\log t)^{4+1/(2\alpha)}}{\log^2t}.$$
\end{proposition}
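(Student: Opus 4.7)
Since $S(0)=0$ and $W(0,\omega)=0$, identities \eqref{heat} and \eqref{eqxy} give $p(t,0,0)=p^X(t,0,0)=p^Y(t,0,0)$. Applying Lemma~\ref{upper} at the point $x=0$ with $R=R(t,\omega)$ defined by \eqref{e4-1} (so that $4R(t,\omega)V(R(t,\omega),\omega)=t$) yields
\begin{equation*}
p(t,0,0)\;\le\;\frac{2}{V(R(t,\omega),\omega)}\;=\;\frac{8\,R(t,\omega)}{t}.
\end{equation*}
Thus the problem reduces to showing the first-moment estimate
\begin{equation*}
\Ee[R(t,\omega)]\;\le\;C\,t\,\frac{(\log\log t)^{4+1/(2\alpha)}}{(\log t)^2}
\end{equation*}
for all sufficiently large $t$.

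To handle this, I would invoke the layer-cake identity $\Ee[R(t,\omega)]=\int_0^\infty \Pp(R(t,\omega)>r)\,dr$ and observe that by strict monotonicity of $r\mapsto 4rV(r,\omega)$ (see \eqref{l2-1-1}) one has the direct rewrite $\{R(t,\omega)>r\}=\{V(r,\omega)<t/(4r)\}$. I would then fix a threshold $r_\star(t):=C_0\,t\,(\log\log t)^{4+1/(2\alpha)}/(\log t)^2$ and split the integral at $r_\star(t)$. The piece over $[0,r_\star(t)]$ is bounded trivially by $r_\star(t)$, which is already of the target order; hence the heart of the matter is estimating the tail $\int_{r_\star(t)}^\infty \Pp(V(r,\omega)<t/(4r))\,dr$.

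For this tail I would translate $\{V(r,\omega)<t/(4r)\}$ into an event on the driving path $W$. Using the identity $V(r,\omega)=\int_{-\delta_-(r,\omega)}^{\delta_+(r,\omega)} e^{-W(z,\omega)}\,dz$ with $\delta_\pm$ as in \eqref{e3-1}, the event forces the infimum of $W$ on the interval $[-\delta_-(r,\omega),\delta_+(r,\omega)]$ to be much less negative than the typical $-\sqrt{\delta_+(r,\omega)+\delta_-(r,\omega)}$. After Brownian scaling $W(\lambda z)\stackrel{d}{=}\sqrt{\lambda}\,\widetilde W(z)$ with $\lambda\asymp(\log t)^2$, the resulting rescaled event is of the form $\{\inf_{[-L,L]}\widetilde W\ge -h\}$ with $L=O(1)$ and $h=O(1/\log t)$. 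The reflection-principle identity $\Pp(\inf_{[-L,L]}\widetilde W\ge -h)\asymp h/\sqrt{L}$, applied independently on the two half-lines, then produces the base rate $1/(\log t)^2$ that matches the valley-of-$W$ heuristic flagged in the introduction.

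The principal obstacle will be the quantitative tracking of the polylogarithmic correction $(\log\log t)^{4+1/(2\alpha)}$. I would carry this out via a multiscale decomposition of $\Omega$ indexed by: (a) the location and depth of the dominant valley of $W$ near $0$; (b) the two one-sided running maxima of $W$, which control $\delta_+(r,\omega)$ and $\delta_-(r,\omega)$ and hence couple $r$ to the scale $\lambda$; and (c) the size of the H\"older coefficient $\Xi(\cdot,\cdot;\omega)$ from \eqref{e3-1a} at the compatible spatial scale, which is needed to pass from pointwise to integrated bounds on $V(r,\omega)$ via Lemma~\ref{l3-1}. Gaussian tails will contribute factors $(\log\log t)^{1/2}$ per normalised coordinate, summing to the exponent $4$, while the H\"older tail, treated by Fernique's theorem as in \eqref{t1-1-0}--\eqref{t1-1-1}, will contribute the extra $(\log\log t)^{1/(2\alpha)}$ piece. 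The delicate technical step is arranging the decomposition so that the correction factors across different scales compound additively (in the exponent of $\log\log t$) and then integrating in $r$ against the resulting tail density to recover a contribution of the same order as $r_\star(t)$.
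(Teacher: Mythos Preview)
Your reduction to $p(t,0,0)\le 8R(t,\omega)/t$ and the layer-cake identity for $\Ee[R(t,\omega)]$ are correct, but the paper does \emph{not} go through $\Ee[R(t,\omega)]$; it works directly with $p^X(t,0,0,\omega)$ and uses a mechanism that your route does not have access to. The paper decomposes $\Omega$ into explicit events $\Lambda_t^i\cap\tilde\Lambda_t^j$ ($1\le i,j\le 5$) defined via hitting times $H_a$ of levels of $W$ and the H\"older seminorm $\xi_{\pm,\log^4 t}$. On ``good'' events such as $\Lambda_t^1$ and $\Lambda_t^{22}$ one checks, using only $V_+$, that $p^X(t,0,0,\omega)\le c/(\log t)^2$ pointwise. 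On the remaining ``bad'' events the probability is shown to be $\le c(\log\log t)^4/(\log t)^2$, and then the paper invokes the key Lemma~\ref{small}: for any $\Omega_t$ with $\Pp(\Omega_t)\le C(\log\log t)^{\theta_2}/(\log t)^{\theta_1}$ one has $\Ee[p^X(t,0,0)\I_{\Omega_t}]\le C'(\log\log t)^{\theta_2+1/(2\alpha)}/(\log t)^{\theta_1}$. That lemma is what produces the extra exponent $1/(2\alpha)$, and it rests on the a priori bound $p^X(t,0,0,\omega)\le c\,\Xi(0,1,\omega)^{1/\alpha}$ for all $t\ge 1$, obtained from \eqref{p3-1-1--} (a small-time estimate) plus monotonicity of $t\mapsto p^X(t,0,0)$.

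Your plan has a genuine gap at two points. First, the reduction of $\{V(r,\omega)<t/(4r)\}$ to $\{\inf_{[-L,L]}\widetilde W\ge -h\}$ after Brownian scaling is not correct: the interval $[-\delta_-(r,\omega),\delta_+(r,\omega)]$ is itself random and is governed by the \emph{maximum} of $W$ (through $\int e^{W}$), so the event couples $\max W$ and $\min W$ and is really a gambler's-ruin event $\{H_a<H_{-b}\}$ with several competing levels, exactly as encoded in the paper's $\Lambda_t^i$; the naive reflection-principle rate $h/\sqrt{L}$ does not capture this. Second, and more seriously, the mechanism that yields the factor $(\log\log t)^{1/(2\alpha)}$ does not survive the passage to $R(t,\omega)$. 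In the paper this factor comes from bounding $p^X(t,0,0,\omega)$ by a quantity depending only on $\Xi(0,1,\omega)$, the H\"older coefficient of $W$ on $[-1,1]$. The surrogate $R(t,\omega)/t$ has no such local bound: it is controlled only by $\xi(0,(t/2)^{1/2})$ (cf.\ Lemma~\ref{l3-1}), i.e., by $W$ on an interval of length $\asymp t^{1/2}$, so on the small-probability events you cannot trade probability for size of $R(t,\omega)$ the way Lemma~\ref{small} trades probability for size of $p^X$. Without an analogue of Lemma~\ref{small} your layer-cake tail would at best reproduce the $(\log\log t)^4$ coming from the four gambler's-ruin factors, but the extra $(\log\log t)^{1/(2\alpha)}$ would have to be justified by a different argument than the one you sketch.
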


For any
$x\in \R$ and $\omega\in \Omega$, set $\Xi(x,\omega):=\Xi(x,1,\omega)$, i.e.,
$$\Xi(x,\omega):=\sup_{s,t\in [x-1,x+1]}\frac{|W(s)-W(t)|}{|s-t|^\alpha}.$$ For simplicity, we write $\Xi(0,\omega)$ as $\Xi(\omega)$.
As explained before, by Fernique's theorem, there exists a constant $\lambda>0$ so that
\begin{equation}\label{ieqR}\Ee[e^{\lambda\Xi^2}]<\infty.\end{equation} On the other hand, according to \eqref{e3-2}, for any $r\in (0,1]$,
$$\xi(r):=\xi(0,r,\om)=\sup_{-r\le s,t\le r}|W(s)-W(t)|\le (2r)^{\alpha}\sup_{-r\le s,t\le r}\frac{|W(s)-W(t)|}{|s-t|^\alpha}\le (2r)^\alpha \Xi(\omega).$$
Thus, by Lemma \ref{Q-on}, for all $t\in (0,1]$, it holds almost surely that
\begin{align}\label{p3-1-1--}
p^X(t,0,0)
\leq 2\sqrt{2}t^{-1/2} e^{3\xi((t/2)^{1/2})}\le  2\sqrt{2}t^{-1/2}e^{c_0t^{\alpha/2}\Xi(\omega)}.
\end{align}

We begin with the following lemma.
\bl\label{small}
Assume that there exist constants $C_2,\theta_1>0$ and $\theta_2\in\R$ such that for all
$t\ge4$,
there is $\Omega_t\subset \Omega$ so that
\begin{equation}\label{l4-1-0}
\Pp(\Omega_t)\leq \frac{C_2(\log\log t)^{\theta_2}} {\log^{\theta_1} t}.
\end{equation}
Then there is a constant $C_3>0$ such that for all $t\ge 4$,
\begin{equation}\label{l4-1-0a}
\Ee[p^X(t,0,0)\I_{\Omega_t}]\leq
\frac{C_3(\log\log t)^{\theta_2+{1}/{(2\alpha)}}}{\log^{\theta_1}t}.
\end{equation}
\el
\begin{proof}
For any $K\geq 1$, set $\Lambda_K:=\{\omega\in \Omega:\Xi(\omega)\leq K\}$.
According to \eqref{p3-1-1--}, there is a constant $c_1>0$ such that
\begin{align}\label{kheat}
p^X\left(\frac{1}{K^{2/\alpha}},0,0,\omega\right)\leq  c_1K^{1/\alpha},\quad
\omega\in \Lambda_K.
\end{align}

Next, we choose $K_0>1$. In particular, $\frac{1}{K_0^{2/\alpha}}\le 1$. By \eqref{kheat} and the fact $t\mapsto p^X(t,0,0)$ is decreasing (which can be verified by the same  argument as that for $p^Y(t,0,0)$ as in the proof of Lemma \ref{l2-1}),
we get
$$
\sup_{t\ge 1}p^X\left(t,0,0,\omega\right)\le p^X\left(\frac{1}{K_0^{2/\alpha}},0,0,\omega\right)\le c_3,\quad \omega\in \Lambda_{K_0}.
$$
Furthermore, set
$$\Lambda_{K_0}^{-1}:=\{\omega\in \Omega: K_0< \Xi(\omega)\le K_0+1\}$$ and
$$\Lambda_{K_0}^k:=\{\omega\in \Omega: K_0+2^k< \Xi(\omega)\le K_0+2^{k+1}\},\quad k\ge 0.$$
By \eqref{ieqR},
\begin{equation}\label{e:4.8}
\Pp(\Lambda_{K_0}^k)\le c_4e^{-\lambda(K_0+2^k)^2}\le
c_5\exp\left(-c_62^{2k}\right),
\quad k\ge 0.
\end{equation} We note that, by adjusting the constants $c_5$ and $c_6$, one can see that the estimate above also holds with $k=-1$.
Using \eqref{kheat} and the fact $t\mapsto p^X(t,0,0)$ is decreasing again, we know that for every $k\ge -1$,
\begin{equation}\label{e:4.9}
\sup_{t\ge 1}p^X\left(t,0,0,\omega\right)\le p^X\left(\frac{1}{(K_0+2^{k+1})^{2/\alpha}},0,0,\om\right)\le c_7(K_0+2^{k+1})^{1/\alpha}
\le c_82^{k/\alpha},\quad \omega\in \Lambda_{K_0}^k.
\end{equation}

Therefore, by \eqref{e:4.9}, \eqref{e:4.8} and \eqref{l4-1-0},  we obtain for every
$t\ge4$,
\begin{align*}
\Ee\left[p^X(t,0,0)\I_{\Omega_t}\right]&=\Ee\left[p^X(t,0,0)\I_{\Omega_t\cap \Lambda_{K_0}}\right]+
\sum_{k=-1}^\infty\Ee\left[p^X(t,0,0)\I_{\Omega_t\cap \Lambda_{K_0}^k}\right]\\
&\le c_9\Pp(\Omega_t)+c_9\sum_{k=-1}^\infty 2^{k/\alpha}\min\{\Pp(\Omega_t),\Pp(\Lambda_{K_0^k})\}\\
&\le \frac{c_{10}(\log\log t)^{\theta_2}}{\log^{\theta_1}t}+
c_{10}\left(\sum_{k=-1}^{k_0}2^{k/\alpha}\frac{(\log\log t)^{\theta_2}}{\log^{\theta_1}t}
+\sum_{k=k_0+1}^\infty 2^{k/\alpha}
\exp (-c_62^{2k} )
\right)\\
&\le \frac{c_{11}(\log\log t)^{\theta_2+{1}/{(2\alpha)}}}{\log^{\theta_1}t},
\end{align*}
where in the second inequality
$$
k_0:=\sup\left\{k\in \N_+: \exp(-c_62^{2k})\ge\frac{(\log\log t)^{\theta_2}}{\log^{\theta_1}t},\right\}
$$
and the last inequality follows from
$
2^{k_0/\alpha}\le c_{12}(\log\log t)^{1/(2\alpha)}.
$
The proof is complete.
\end{proof}

We next give a decomposition of the probability space to give an analysis on
the valley of $W(\cdot,\om)$, which is a key ingredient to the proof of Proposition \ref{T:UP}. Define
\begin{align*}
&H_{a,b}(\omega):=\inf\{t>0: W(t,\omega)\notin(a,b)\},\quad H_z(\omega):=\inf\{t>0: W(t,\omega)=z\},\\
&\tilde H_{a,b}(\omega):=
\sup
\{t<0: W(t,\omega)\notin(a,b)\},\quad \tilde H_z(\omega):=\sup\{t<0: W(t,\omega)=z\}.
\end{align*}
Given
$R>0$, set
\begin{align*}
&\xi_{+,R}(\omega):=\left\{\omega\in \Omega: \sup_{z_1,z_2\in [0,R]  \text { with } |z_1-z_2|\le 1}\frac{|W(z_1,\omega)-W(z_2,\omega)|}{|z_1-z_2|^\alpha}\right\},\\
&\xi_{-,R}(\omega):=\left\{\omega\in \Omega: \sup_{z_1,z_2\in [-R,0] \text { with }|z_1-z_2|\le 1}\frac{|W(z_1,\omega)-W(z_2,\omega)|}{|z_1-z_2|^\alpha}\right\}.
\end{align*}
Fix $\theta\in (0,1/2)$, and let $K_1,K_2$ (which are large enough) be positive constants to be determined later. We define
\begin{align*}
&\Lambda_t^1:=\Big\{\omega\in \Omega: H_{-K_1\log\log t}(\omega)\le H_{\theta\log t}(\omega),\  H_{2\theta\log t}(\omega)\le H_{-(1-2\theta)\log t}(\omega),\\
&\qquad\qquad\qquad\quad\ \xi_{+,\log^4 t}(\omega)\le K_2\sqrt{\log\log t},
H_{2\theta \log t}(\omega)\le \log^4 t\Big\},\\
&\Lambda_t^2:=\Big\{\omega\in \Omega: H_{2\theta\log t}(\omega)> H_{-(1-2\theta)\log t}(\omega),\ \xi_{+,\log^4 t}(\omega)\le K_2\sqrt{\log\log t},\ H_{2\theta \log t}(\omega)\le \log^4 t\Big\},\\
&\Lambda_t^3:=\left\{\omega\in \Omega: H_{2\theta \log t}(\omega)>\log^4 t\right\},\\
&\Lambda_t^4:=\{\omega\in \Omega:
\xi_{+,\log^4 t}(\omega)> K_2\sqrt{\log\log t}\},\\
&\Lambda_t^5:=\left\{\omega\in \Omega: H_{-K_1\log\log t}(\omega)> H_{\theta\log t}(\omega)\right\}.\
\end{align*}
Furthermore, define $\tilde \Lambda_t^i$, $i=1,\cdots,5$, by the same way as above with $\tilde H_z(\omega)$ and $\xi_{-,R}(\omega)$ instead of $H_z(\omega)$ and $\xi_{+,R}(\omega)$ respectively,
which represent the same event of $\{W(x)\}_{x\in \R}$ at $(-\infty,0]$.
Obviously,
\begin{align*}
\Omega=\cup_{i=1}^5 \Lambda_t^i=\cup_{i=1}^5 \tilde \Lambda_t^i.
\end{align*}

\begin{lemma}\label{l4-2}
For any $K_2>0$, there exist positive constants $K_1^*$, $T_2$ and $C_4$ such that, if $K_1\ge K_1^*$ in the definitions of $\Lambda_t^1$ and
$\tilde \Lambda_t^1$ above, then for any $t\ge T_2$,
\begin{equation}\label{l4-2-1}
\Ee\left[p(t,0,0)\I_{\Lambda_t^1}\right]\le \frac{C_4}{\log^2 t},\quad
\Ee\left[p(t,0,0)\I_{\tilde \Lambda_t^1}\right]\le \frac{C_4}{\log^2 t}.
\end{equation}
\end{lemma}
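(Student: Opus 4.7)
I will combine the on-diagonal upper bound from Lemma~\ref{upper} with a deterministic lower bound on $V_+(R_+(t,\om))$ valid on the whole event $\Lambda_t^1$. Since $W(0,\om)=0$ and $S(0,\om)=0$, we have $p(t,0,0)=p^X(t,0,0)=p^Y(t,0,0)$, so \eqref{l2-2-1a} and the defining relation $4R_+V_+(R_+)=t$ give $p(t,0,0)\le 2/V_+(R_+(t,\om))$. The whole task is therefore to show $V_+(R_+)\ge c\,(\log t)^{K_1}/(\log\log t)^{1/(2\alpha)}$ on $\Lambda_t^1$; the bound on $\tilde{\Lambda}_t^1$ will then follow by an identical argument applied to $V_-(R_-)$, because $\tilde{\Lambda}_t^1$ mirrors $\Lambda_t^1$ on the negative half-line.

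Fix $\om\in\Lambda_t^1$ and set $a:=H_{-K_1\log\log t}(\om)$. Since $a\le H_{\theta\log t}\le H_{2\theta\log t}\le\log^4 t$, the H\"older control $\xi_{+,\log^4 t}(\om)\le K_2\sqrt{\log\log t}$ is applicable on $[0,a]$. A short contradiction argument rules out $a\le 1$: it would force $K_1\log\log t\le K_2\sqrt{\log\log t}\cdot a^\alpha$, hence $a\ge c(\log\log t)^{1/(2\alpha)}>1$ for large $t$. On $[0,a]$ one has $-K_1\log\log t\le W\le\theta\log t$, giving
\begin{equation*}
S(a)\le a\,t^\theta,\qquad V_+(S(a))=\int_0^a e^{-W(z)}\,dz\le a\,(\log t)^{K_1}.
\end{equation*}
Combining these bounds with $a\le\log^4 t$ yields $S(a)\,V_+(S(a))\le \log^{8+K_1}t\cdot t^\theta=o(t)$ as $t\to\infty$ (using $\theta<1$). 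Since $R\mapsto RV_+(R)$ is continuous and strictly increasing with value $t/4$ at $R_+$, this forces $R_+>S(a)$, equivalently $S^{-1}(R_+)>a$, for all $t\ge T_2$.

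With $[a-1,a]\subset[0,S^{-1}(R_+)]$, the H\"older bound gives $W(z)\le -K_1\log\log t+K_2\sqrt{\log\log t}\,(a-z)^\alpha$ for $z\in[a-1,a]$, so
\begin{equation*}
V_+(R_+)\ge\int_{a-1}^a e^{-W(z)}\,dz\ge(\log t)^{K_1}\int_0^1 e^{-K_2\sqrt{\log\log t}\,u^\alpha}\,du\ge\frac{c\,(\log t)^{K_1}}{(\log\log t)^{1/(2\alpha)}},
\end{equation*}
the final estimate obtained by restricting $u$ to $[0,(K_2\sqrt{\log\log t})^{-1/\alpha}]$, where the exponent does not exceed one. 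Substituting back yields $p(t,0,0)\le C(\log\log t)^{1/(2\alpha)}/(\log t)^{K_1}$ on $\Lambda_t^1$. Fixing $K_1^\ast:=3$ and any $K_1\ge K_1^\ast$ makes $(\log\log t)^{1/(2\alpha)}\le(\log t)^{K_1-2}$ for $t\ge T_2$, so $p(t,0,0)\le C_4/\log^2 t$ on $\Lambda_t^1$, and integrating against $\mathbf{1}_{\Lambda_t^1}$ gives \eqref{l4-2-1}.

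The hard part is the scale-matching step $S^{-1}(R_+)>a$: without it, the volume $V_+(R_+)$ would not see the dip of $W$ at $a$ and the polynomial enhancement $(\log t)^{K_1}$ would disappear from the upper bound. Establishing it uses each ingredient in the definition of $\Lambda_t^1$: the H\"older bound controls $W$ pointwise near $a$, the cap $H_{2\theta\log t}\le\log^4 t$ prevents $a$ from being too large, and the constraint $\theta<1/2$ ensures $t^\theta$ is negligible relative to $t$ up to logarithmic factors.
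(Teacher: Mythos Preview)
Your proof is correct and follows the same strategy as the paper's: bound $p(t,0,0)\le 2/V_+(R_+(t,\om))$ via Lemma~\ref{upper}, show that $R_+(t,\om)$ is large enough that $\delta_+(R_+)=S^{-1}(R_+)$ reaches past the first time $a$ at which $W$ hits $-K_1\log\log t$, and then lower bound $V_+(R_+)$ by the contribution of $e^{-W}$ on a unit interval near that dip using the H\"older control. The only difference is the choice of intermediate scale---the paper uses $R^\ast=t^{3\theta/2}$ and sandwiches $\delta_+(t^{3\theta/2})$ between $H_{\theta\log t}$ and $H_{2\theta\log t}$ (which requires the extra condition $H_{2\theta\log t}\le H_{-(1-2\theta)\log t}$ from $\Lambda_t^1$ to bound $V_+(t^{3\theta/2})$ above), whereas your choice $R^\ast=S(a)$ makes $\delta_+(R^\ast)=a$ automatic and never uses that condition; this is a mild but genuine simplification.
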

\begin{proof}
We only prove the first inequality in \eqref{l4-2-1}, and the second one can be proved by
exactly the same argument.
For every $\omega\in \Lambda_t^1$, set
$$r_1(\omega):=\sup_{0\le z \le H_{\theta\log t}(\omega)}\left(-W(z,\omega)\right),\quad r_2(\omega):=\sup_{H_{\theta\log t}(\om)\le z\le H_{2\theta\log t}(\omega)}\left(-W(z,\omega)\right).
$$
By the definition of $\Lambda_t^1$,
\begin{equation}\label{l4-2-2}
K_1\log\log t\le r_1(\omega)\le (1-2\theta)\log t,\,\, \,\, r_2(\omega)\le (1-2\theta)\log t,\quad \omega\in \Lambda_t^1,
\end{equation}
where we used the facts that
\begin{align*}
&H_{-K_1\log\log t}(\omega)\le H_{\theta\log t}(\omega)\quad\hbox{ if and only if }\quad \sup_{0\le z\le H_{\theta\log t}(\omega)}
\left(-W(z,\omega)\right)\ge K_1\log\log t,\\
&H_{2\theta\log t}(\omega)\le H_{-(1-2\theta)\log t}(\omega)\quad\hbox{ if and only if }\quad \sup_{0\le z\le H_{2\theta\log t}(\omega)}
\left(-W(z,\omega)\right)\le (1-2\theta)\log t.
\end{align*}
For every $\om\in \Lambda_t^1$,
\begin{equation}\label{l4-2-5}
\sup_{z\in [0,1]}|W(z,\om)|= \sup_{z\in [0,1]}\left|W(z,\om)-W(0,\om)\right|\le \xi_{+,1}(\om)\le \xi_{+,\log^4 t}\left(\om\right)\le K_2\sqrt{\log\log t}.
\end{equation}
Choose $T_2\ge 2$ large enough so that $K_2\sqrt{\log\log t}\le 2\theta \log t$ for all $t\ge T_2$. Then,
$H_{2\theta \log t}(\om)>1$, so for every $\om \in \Lambda_t^1$ and $t>T_2$ (by choosing $T_2$ large if necessary),
$$
\int_0^{H_{\theta \log t}(\om)}e^{W(z,\om)}\,dz \le e^{\theta \log t}H_{\theta \log t}(\om)\le t^{\theta}\log^4 t\le t^{{3\theta}/{2}}
$$ and $$
\int_0^{H_{2\theta \log t}(\om)}e^{W(z,\om)}\,dz \ge \int_{H_{2\theta \log t}(\om)-1}^{H_{2\theta \log t}(\om)}
e^{W(z,\om)}\,dz\ge e^{2\theta \log t-K_2\sqrt{\log\log t}}\ge t^{{3\theta}/{2}}.
$$
Here in the second step of the inequality above we have used the fact that for every $\om \in \Lambda_t^1$ and $H_{2\theta \log t}(\om)-1\le z \le H_{2\theta \log t}(\om)$,
\begin{equation}\label{l4-2-3a}
\begin{split}
W(z,\om)&\ge W\left(H_{2\theta \log t}(\om),\om\right)-\left|W\left(H_{2\theta \log t}(\om),\om\right)-W(z,\om)\right|\\
&\ge 2\theta \log t-\xi_{+,H_{2\theta \log t}(\om)}\left(\om\right)\cdot\left|H_{2\theta \log t}(\om)-z\right|\\
&\ge 2\theta \log t-\xi_{+,\log^4 t}\left(\om\right)\ge 2\theta \log t-K_2\sqrt{\log\log t}.
\end{split}
\end{equation}
Therefore,
\begin{equation}\label{l4-2-3}
H_{\theta\log t}(\om)\le \delta_+(t^{{3\theta}/{2}},\om)\le
H_{2\theta\log t}(\om),\quad \om \in \Lambda_t^1.
\end{equation}

Let $z_0(\om)\in [0,H_{\theta\log t}(\om)]$ such that
\begin{align*}
-W\left(z_0(\om),\om\right)=r_1(\omega)=\sup_{0\le z \le H_{\theta\log t}(\omega)}\left(-W(z,\omega)\right).
\end{align*} Choosing $T_2$ large enough if necessary so that $K_2\sqrt{\log\log t}\le K_1\log\log t$ for all $t\ge T_2$. By this fact, \eqref{l4-2-5} and \eqref{l4-2-2}, we know
immediately that $z_0(\om)\ge 1$ when $t\ge T_2$. Hence, according to \eqref{l4-2-3}, for every $\om \in \Lambda_t^1$ and $t\ge T_2$,
\begin{equation}\label{l4-2-4}
\begin{split}
V_+(t^{3\theta/2},\om)&=\int_0^{\delta_+(t^{{3\theta}/{2}},\om)}e^{-W(z,\om)}\,dz
\ge \int_0^{H_{\theta\log t}(\om)}e^{-W(z,\om)}\,dz\\
&\ge \int_{z_0(\om)-1}^{z_0(\om)}e^{-W(z,\om)}\,dz
\ge e^{r_1(\om)-K_2\sqrt{\log\log t}}\ge e^{-K_2\sqrt{\log\log t}} \log^{K_1}t ,
\end{split}
\end{equation}
where the third inequality follows from the argument for \eqref{l4-2-3a}, and in the last inequality we used \eqref{l4-2-2}.
On the other hand, using \eqref{l4-2-2} and \eqref{l4-2-3} again, we derive that for every $\om \in \Lambda_t^1$ and $t\ge T_2$,
\begin{align*}
V_+(t^{{3\theta}/{2}},\om)&=\int_0^{\delta_+(t^{{3\theta}/{2}},\om)}e^{-W(z,\om)}\,dz\le
\int_0^{H_{2\theta \log t}(\om)}e^{-W(z,\om)}\,dz\\
&\le e^{\max\{r_1(\om),r_2(\om)\}}H_{2\theta \log t}(\om)\le t^{1-2\theta}\log^4 t.
\end{align*}
In particular,
choosing $T_2$ large if necessary, we find that for every $\om \in \Lambda_t^1$ and $t\ge T_2$,
$$
4t^{{3\theta}/{2}}V_+(t^{{3\theta}/{2}},\om)\le t^{1-{\theta}/{2}}\log^4 t \le t.
$$
This along with the definition of $R_+(t,\om)$ yields that that $$R_+(t,\om)\ge t^{{3\theta}/{2}}$$ for every
$\om\in \Lambda_t^1$ and $t\ge T_2$.

Now, combining this with \eqref{l2-2-1a} and \eqref{l4-2-4}, we have
\begin{align*}
p^X(t,0,0)&= p^Y(t,0,0)=p^Y\left(4R_+(t,\om)V_+\left(R_+(t,\om),\om\right),0,0\right)\\
&\le \frac{2}{V_+\left(R_+(t,\om),\om\right)}\le \frac{2}{V_+(t^{{3\theta}/{2}},\om)}\le \frac{c_1e^{K_2\sqrt{\log\log t}}}{\log^{K_1}t},\quad \om\in \Lambda_t^1,\ t\ge T_2.
\end{align*}
Therefore,
we can
find $K_1^*\ge 2$ large enough so that for every $K_1\ge K_1^*$,
\begin{align*}
p^X(t,0,0,\om)\le \frac{c_2}{\log^2 t},\quad \om\in \Lambda_t^1,\ t\ge T_2,
\end{align*}
which proves the first inequality in \eqref{l4-2-1} immediately.
\end{proof}

\begin{lemma}\label{l4-3}
Given any $K_1,K_2>0$, there exist  constants $T_3, C_5>0$ such that for any $t\ge T_3$,
\begin{equation}\label{l4-3-1}
\Ee\left[p^X(t,0,0)\I_{\Lambda_t^2\cap \tilde \Lambda_t^2}\right]\le
\frac{C_5(\log\log t)^{4+{1}/{(2\alpha)}}}{\log^2 t}.
\end{equation}
\end{lemma}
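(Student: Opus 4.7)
The plan is to exploit the deep valley of $W$ on the positive half-line, which is enforced by $\Lambda_t^2$, to produce a strong pointwise lower bound on the one-sided volume $V_+(R,\om)$, and then combine this with the one-sided form of Lemma \ref{upper} to bound $p^X(t,0,0)=p^Y(t,0,0)$. The event $\tilde\Lambda_t^2$ symmetrically forces a deep valley on the negative half-line, but I will work with $V_+$ only; intersecting with $\tilde\Lambda_t^2$ just shrinks the event and preserves the inequality, so the bound we prove on $\Lambda_t^2$ will transfer automatically.

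Set $T_1:=H_{-(1-2\theta)\log t}(\om)$ and $T_2:=H_{2\theta\log t}(\om)$. On $\Lambda_t^2$ one has $T_1<T_2\le\log^4 t$ and $W(T_1,\om)=-(1-2\theta)\log t$. Using $\xi_{+,\log^4 t}(\om)\le K_2\sqrt{\log\log t}$, I will argue that on a unit interval adjacent to $T_1$ within $[0,\log^4 t]$ (take $[T_1,T_1+1]$, with $[T_1-1,T_1]$ as a fallback when $T_1+1>\log^4 t$), $W$ stays within $K_2\sqrt{\log\log t}$ of $W(T_1)$. Two consequences follow: first,
\[
\int_{T_1}^{T_1+1}e^{-W(z)}\,dz\ \ge\ t^{1-2\theta}e^{-K_2\sqrt{\log\log t}};
\]
second, since $W<2\theta\log t$ on $[0,T_1]$ by definition of $T_2$,
\[
S(T_1+1)=\int_0^{T_1+1}e^{W(y)}\,dy\ \le\ \log^4 t\cdot t^{2\theta}+t^{-(1-2\theta)}e^{K_2\sqrt{\log\log t}}\ \le\ 2\log^4 t\cdot t^{2\theta}
\]
for $t$ sufficiently large. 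Setting $R^*:=2\log^4 t\cdot t^{2\theta}$ gives $\delta_+(R^*,\om)\ge T_1+1$, hence $V_+(R^*,\om)\ge t^{1-2\theta}e^{-K_2\sqrt{\log\log t}}$, and consequently $4R^*V_+(R^*,\om)\ge 8\log^4 t\cdot t\cdot e^{-K_2\sqrt{\log\log t}}\ge t$ for large $t$. By monotonicity of $R\mapsto RV_+(R,\om)$ this forces $R_+(t,\om)\le R^*$. Combining with Lemma \ref{upper} and the identity $V_+(R_+(t,\om),\om)=t/(4R_+(t,\om))$ yields
\[
p^X(t,0,0)\ \le\ \frac{2}{V_+(R_+(t,\om),\om)}=\frac{8R_+(t,\om)}{t}\ \le\ \frac{16\log^4 t}{t^{1-2\theta}}
\]
pointwise on $\Lambda_t^2\cap\tilde\Lambda_t^2$. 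Since $\theta<1/2$, this decays polynomially in $t$ and is, for $t$ large, far smaller than $(\log\log t)^{4+1/(2\alpha)}/\log^2 t$, so taking expectation against the trivial bound $\Pp(\Lambda_t^2\cap\tilde\Lambda_t^2)\le 1$ completes the argument.

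The main obstacle will be the volume bookkeeping in the middle step: one must simultaneously include the unit interval around $T_1$ inside $[0,\delta_+(R^*,\om)]$ (via the upper bound on $S(T_1+1)$) and extract the full exponential gain $t^{1-2\theta}$ from the bottom of the valley (via the Hölder control). Two boundary issues require short separate arguments for $t$ large: the case $T_1+1>\log^4 t$ is handled by using $[T_1-1,T_1]$ in place of $[T_1,T_1+1]$, and the case $T_1<1$ is ruled out by comparing $W(0)=0$ with $W(T_1)=-(1-2\theta)\log t$ through the Hölder estimate on a unit interval. In contrast to the complementary events $\Lambda_t^{3,4,5}$, where Lemma \ref{small} applied to a small-probability $\Omega_t$ is the natural tool, here the bound comes entirely from the pointwise heat-kernel estimate, and the $(\log\log t)^{4+1/(2\alpha)}$ factor in the statement is not tight for this regime.
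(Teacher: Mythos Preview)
Your argument is correct and considerably more direct than the paper's. The paper does \emph{not} obtain a pointwise bound on all of $\Lambda_t^2$: it further decomposes $\Lambda_t^2\subset\Lambda_t^{21}\cup\Lambda_t^{22}\cup\Lambda_t^{23}$ according to the height $r_1=\sup_{[0,H_{-(1-2\theta)\log t}]}W$ of the first peak and the depth $r_2=\sup_{[0,H_{(1-2\theta)r_1}]}(-W)$ of the first dip. Only on the middle piece $\Lambda_t^{22}$ (where both $r_1,r_2\ge K_3\log\log t$) does the paper extract a pointwise bound, and that bound is merely $p^X(t,0,0)\le c/\log^2 t$, obtained by showing $R_+(t,\om)\ge e^{(1-\theta)r_1}$ and then using monotonicity of $V_+$ to get $V_+(R_+(t,\om))\ge V_+(e^{(1-\theta)r_1})\ge e^{(1-\theta)r_2}$. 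For the remaining pieces $\Lambda_t^{21},\Lambda_t^{23}$ the paper proves $\Pp(\Lambda_t^{2i})\le c(\log\log t)^2/\log t$, pairs them with the independent events $\tilde\Lambda_t^{2j}$ ($j\in\{1,3\}$) to reach probability $\le c(\log\log t)^4/\log^2 t$, and then invokes Lemma~\ref{small}; this is where the factor $(\log\log t)^{4+1/(2\alpha)}$ in the statement originates.

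Your route instead exploits the full depth $-(1-2\theta)\log t$ of the valley guaranteed on $\Lambda_t^2$, shows $R_+(t,\om)\le R^*:=2t^{2\theta}\log^4 t$, and converts this via the identity $V_+(R_+(t,\om),\om)=t/(4R_+(t,\om))$ into $p^X(t,0,0)\le 8R^*/t=16\log^4 t/t^{1-2\theta}$, a polynomially small quantity. This bypasses the sub-decomposition, the probability estimates, the independence argument, and Lemma~\ref{small} entirely. Two minor remarks: first, your boundary case $T_1+1>\log^4 t$ in fact cannot occur for large $t$, since $T_1<T_2\le\log^4 t$ together with the H\"older control forces $|W(T_2)-W(T_1)|=\log t\le K_2\sqrt{\log\log t}$ whenever $T_2-T_1<1$, a contradiction; so $T_1+1\le T_2\le\log^4 t$ automatically. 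Second, as you note, the $(\log\log t)^{4+1/(2\alpha)}$ factor in the lemma is slack for your argument; it is needed in the paper only because of the pieces handled through Lemma~\ref{small}.
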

\begin{proof}
To prove the desired assertion. We will decompose $\Lambda_t^2$ by $\Lambda_t^2\subset \cup_{i=1}^3 \Lambda_t^{2i}$, where
\begin{align*}
&\Lambda_t^{21}:=\Big\{\om \in \Omega:  \sup_{0\le z\le H_{-(1-2\theta)\log t}(\om)}W(z,\om)=r_1(\om),\
\sup_{0\le z\le H_{(1-2\theta)r_1(\om)}(\om)}(-W(z,\om))=r_2(\om)\\
&\qquad\qquad\qquad \qquad{\rm with}\ \ K_3\log\log t\le r_1(\om) \le 2\theta \log t,\ 0\le r_2(\om)\le K_3\log\log t\Big\},\\
&\Lambda_t^{22}:=\Big\{\om \in \Omega: \sup_{0\le z\le H_{-(1-2\theta)\log t}(\om)}W(z,\om)=r_1(\om),\
\sup_{0\le z\le H_{(1-2\theta)r_1(\om)}(\om)}(-W(z,\om))=r_2(\om)\\
&\qquad\qquad\qquad \qquad{\rm with}\ \ K_3\log\log t\le r_1(\om) \le 2\theta \log t,\ K_3\log\log t< r_2(\om)\le (1-2\theta)\log t,\\
&\qquad\qquad \qquad\qquad \ H_{2\theta \log t}(\om)\le \log^4 t \text{\,\,and\,\,}\xi_{+,\log^4 t}(\om)\le K_2\sqrt{\log \log t}\Big\},\\
&\Lambda_t^{23}:=\Big\{\om\in \Omega: H_{-(1-2\theta)\log t}(\om)\le H_{K_3\log\log t}(\om)\Big\}
\end{align*}
with $K_3$ being  a positive constant to be determined later. Analogously, define
$\tilde \Lambda_t^{2i}$, $i=1,2,3$, as the same way as that for $\Lambda_t^{2i}$ with
$\tilde H_z(\om)$ and $\xi_{-,R}(\om)$ instead of $H_z(\om)$ and  $\xi_{+,R}(\om)$ respectively.

(i) Let $z_0(\om)\in [0,H_{-(1-2\theta)\log t}(\om)]$ be such that
$$W\left(z_0(\om),\om\right)=r_1(\om)=\sup_{0\le z\le H_{-(1-2\theta)\log t}(\om)}W(z,\om).$$
Then, according to the proof of \eqref{l4-2-5}, we know that $z_0(\om)>1$ (with $T_3$ large enough if necessary),  and so for every
$\om \in \Lambda_{t}^{22}$ and $t>T_3$
\begin{equation}\label{l4-3-2}
\begin{split}
\int_0^{H_{-(1-2\theta)\log t}(\om)}e^{W(z,\om)}\,dz&\ge \int_{z_0(\om)-1}^{z_0(\om)}e^{W(z,\om)}\,dz\ge e^{r_1(\om)-K_2\sqrt{\log\log t}}\ge e^{(1-\theta)r_1(\om)},
\end{split}
\end{equation}
where in the second inequality we have used
the same argument as that for \eqref{l4-2-3a}, and in the last inequality we used the fact that $r_1(\om)\ge K_3\log\log t$ for every $\om \in \Lambda_t^{22}.$
In particular, $$\delta_+(e^{(1-\theta)r_1(\om)},\om)\le H_{-(1-2\theta)\log t}(\om).$$ Therefore,
for every
$\om \in \Lambda_{t}^{22}$ and $t>T_3$,
\begin{align*}
V_+(e^{(1-\theta)r_1(\om)},\om)&=\int_0^{\delta_+\left(e^{(1-\theta)r_1(\om)},\om\right)}e^{-W(z,\om)}\,dz\le \int_0^{H_{-(1-2\theta)\log t}(\om)}e^{-W(z,\om)}\,dz\\
&\le e^{(1-2\theta)\log t}H_{-(1-2\theta)\log t}(\om)\le t^{(1-2\theta)}\log^4 t,
\end{align*}
where the last inequality is due to the fact that (thanks to the definition of $\Lambda_{t}^{22}$)
$$H_{-(1-2\theta)\log t}(\om)\le H_{2\theta \log t}(\om)\le \log^4 t,\quad \om \in \Lambda_{t}^{22}.$$
Thus, for every
$\om \in \Lambda_{t}^{22}$ and $t>T_3$ (by noting that $\theta\in (0,1/2)$ and by taking $T_3$ large enough if necessary),
\begin{align*}
4e^{(1-\theta)r_1(\om)}V_+ (e^{(1-\theta)r_1(\om)},\om )\le 4e^{2(1-\theta)\theta\log t}t^{(1-2\theta)}\log^4 t\le t=
4R_+(t,\om)V_+\left(R_+(t,\om),\om\right),
\end{align*}
which implies that
\begin{align}\label{l4-3-3}
R_+(t,\om)\ge e^{(1-\theta)r_1(\om)}.
\end{align}

On the other hand, for every $\om \in \Lambda_{t}^{22}$ and $t\ge T_3$,
\begin{align*}
\int_0^{H_{(1-2\theta)r_1(\om)}(\om)}e^{W(z,\om)}dz&\le e^{(1-2\theta)r_1(\om)}H_{(1-2\theta)r_1(\om)}(\om)
\le e^{(1-2\theta)r_1(\om)}\log^4 t\le e^{(1-\theta)r_1(\om)},
\end{align*}
where we have used the facts that
\begin{align*}
H_{(1-2\theta)r_1(\om)}(\om)\le H_{2\theta(1-2\theta)\log t}(\om)\le H_{2\theta \log t}(\om)\le \log^4 t,\quad \om \in \Lambda_{t}^{22},
\end{align*}
and, by taking $K_3$ large enough,
\begin{align*}
\theta r_1(\om)\ge \theta K_3\log\log t>4\log\log t,\quad \om \in \Lambda_{t}^{22}.
\end{align*}
This implies immediately that $\delta_+\left(e^{(1-\theta)r_1(\om)},\om\right)\ge H_{(1-2\theta)r_1(\om)}(\om)$, and so
\begin{equation}\label{e:oo}\begin{split}
V_+ (e^{(1-\theta)r_1(\om)},\om )&=\int_0^{\delta_+ (e^{(1-\theta)r_1(\om)},\om )}e^{-W(z,\om)}\,dz\\
&\ge \int_0^{H_{(1-2\theta)r_1(\om)}(\om)}e^{-W(z,\om)}\,dz\ge e^{(1-\theta)r_2(\om)},\quad \om\in \Lambda_t^{22},\ t\ge T_3,
\end{split}\end{equation}
Here the last inequality follows from (by taking $T_3$ large enough if necessary)
\begin{align*}
\int_0^{H_{(1-2\theta)r_1(\om)}(\om)}e^{-W(z,\om)}dz&\ge \int_{z_1(\om)-1}^{z_1(\om)}e^{-W(z,\om)}dz\ge
e^{r_2(\om)-K_2\sqrt{\log\log t}}\ge e^{(1-\theta)r_2(\om)},\quad \om \in \Lambda_t^{22},\ t\ge T_3,
\end{align*}
where $z_1(\om)\in [0,H_{(1-2\theta)r_1(\om)}(\om)]$ satisfies
\begin{align*}
-W\left(z_1(\om),\om\right)=r_2(\om)=\sup_{0\le z\le H_{(1-2\theta)r_1(\om)}(\om)}(-W(z,\om))
\end{align*}
and $z_1(\om)\ge1$ that can be proved by exactly the same way as that of
\eqref{l4-2-5}.

By \eqref{l2-2-1a}, \eqref{l4-3-3} and \eqref{e:oo}, we deduce that for every $\om \in \Lambda_t^{22}$ and $t\ge T_3$,
\begin{align*}
p^X(t,0,0,\om)&=p^Y(t,0,0,\om)=p^Y\left(4R_+(t,\om)V_+\left(R_+(t,\om),\om\right),0,0,\om\right)\\
&\le \frac{2}{V_+\left(R_+(t,\om),\om\right)}\le c_1e^{-(1-\theta)r_2(\om)}\le \frac{c_1}{\log^{(1-\theta)K_3}t}.
\end{align*}
In particular, taking $K_3\ge1$ large enough so that $K_3(1-\theta)>2$, we have
\begin{equation}\label{l4-3-4}
p^X(t,0,0,\om)\le \frac{c_2}{\log^2 t},\quad \om\in \Lambda_t^{22},\ t\ge T_3.
\end{equation}
Similarly, we can obtain
\begin{align*}
p^X(t,0,0,\om)\le \frac{c_2}{\log^2 t},\quad \om\in \tilde \Lambda_t^{22},\ t\ge T_3.
\end{align*}

(ii)  According to \cite[(2.1.2), p.\ 204]{BS},
$$
\Pp_x\left(\sup_{0\le s\le H_z}W(s)\in dy\right)=\frac{x-z}{(y-z)^2}\,dy,\quad z\le x\le y,
$$
where $\Pp_x(\cdot)=\Pp(\cdot|W(0)=x)$ denotes the conditional probability given the event that $W(0,\om)=x$.
Therefore, by the strong Markov property and the fact that we assume $W(0)=0$,
for every $K_3\log \log t \le y_1 \le 2\theta \log t$ and $0\le y_2\le K_3\log\log t$,
\begin{align*}
& \Pp\left(\sup_{0\le s\le H_{-(1-2\theta)\log t}}W(s)\in dy_1, \sup_{0\le s\le H_{(1-2\theta)y_1}}(-W(s))\in dy_2\right)\\
&=\Pp\left(\sup_{0\le s\le H_{(1-2\theta)y_1}}(-W(s))\in dy_2\cdot \Pp_{W(H_{(1-2\theta)y_1})}\left(\sup_{0\le s\le H_{-(1-2\theta)\log t}}W(s)\in dy_1\right)\right)\\
&=\frac{(1-2\theta)y_1}{\left((1-2\theta)y_1+y_2\right)^2}\cdot \frac{(1-2\theta)\log t+(1-2\theta)y_1}{\left(y_1+(1-2\theta)\log t\right)^2}\,dy_1\,dy_2.
\end{align*}
This immediately yields that
\begin{equation}\label{l4-3-5}
\begin{split}
\Pp\left(\Lambda_t^{21}\right)&\le \int_{K_3\log\log t}^{2\theta \log t}\int_0^{K_3\log\log t}
\frac{(1-2\theta)y_1}{\left((1-2\theta)y_1+y_2\right)^2}\cdot \frac{(1-2\theta)\log t+(1-2\theta)y_1}{\left(y_1+(1-2\theta)\log t\right)^2}\,dy_2\,dy_1\\
&\le \frac{c_3\log \log t
}{\log t}\int_{K_3\log\log t}^{2\theta \log t}\frac{1}{r_1}\,dr_1\le \frac{c_4 (\log \log t)^2 }{\log t}
\end{split}
\end{equation}
Similarly, we have
\begin{align*}
\Pp (\tilde \Lambda_t^{21} )\le \frac{c_4 (\log \log t)^2 }{\log t}.
\end{align*}

(iii) According to \cite[(2.2.2), p.\ 204]{BS}, for every $t\ge T_3$,
\begin{equation}\label{l4-3-6}
\begin{split}
\Pp (\Lambda_t^{23} )=
\Pp (\tilde \Lambda_t^{23} )&=
\Pp\left(\inf_{0\le z\le H_{K_3\log\log t}}W(z)\le -(1-2\theta)\log t\right)\\
&=\frac{K_3\log\log t}{K_3\log\log t+(1-2\theta)\log t}\le \frac{c_5\log\log t}{\log t}.
\end{split}
\end{equation}

Note that $\Lambda_t^{2i}$ and $\tilde \Lambda_t^{2i}$, $i=1,2,3$,
are independent with each other. Thus, for every $t\ge T_2$,
\begin{align*}
\Pp (\Lambda_t^{2i}\cap \tilde \Lambda_t^{2j} )=
\Pp (\Lambda_t^{2i} )\cdot\Pp (\tilde \Lambda_t^{2j} )
\le \frac{c_6(\log \log t)^4}{\log^2 t},\quad i=1,3,j=1,3.
\end{align*}
This along with \eqref{l4-1-0a} yields that
\begin{align*}
\Ee\left[p^X(t,0,0)\I_{\Lambda_{t}^{2i}\cap \tilde \Lambda_t^{2j}}\right]\le
\frac{c_7(\log \log t)^{4+{1}/{(2\alpha)}}}{\log^2 t},\quad i=1,3,j=1,3,\ t\ge T_3.
\end{align*}
Therefore, putting the inequality above and \eqref{l4-3-4} together, we find that
\begin{align*}
  \Ee\left[p^X(t,0,0)\I_{\Lambda_t^2 \cap \tilde \Lambda_t^2}\right]
&\le
\Ee\left[p^X(t,0,0)\I_{\left(\cup_{i=1}^3\Lambda_t^{2i}\right)\cap\left(\cup_{j=1}^3\tilde \Lambda_t^{2j}\right)}\right]\\
&\le 2\Ee\left[p^X(t,0,0)\I_{\Lambda_t^{22}}\right]+2\Ee\left[p^X(t,0,0)\I_{\tilde \Lambda_t^{22}}\right]
+\sum_{i=1,3}\sum_{j=1,3}\Ee\left[p^X(t,0,0)\I_{\Lambda_{t}^{2i}\cap \tilde \Lambda_t^{2j}}\right]\\
&\le \frac{c_8(\log \log t)^{4+{1}/{(2\alpha)}}}{\log^2 t}.
\end{align*}
The proof is complete. \end{proof}

\begin{lemma}\label{l4-4}
There exist positive constants $K_2^*$,  $T_4$ and $C_6$ such that if $K_2>K_2^*$ in the definitions of $\Lambda_t^4$ and
$\tilde \Lambda_t^4$, then for $t\ge T_4$,
\begin{equation}\label{l4-4-1}
\Ee\left[p^X(t,0,0)\I_{\Lambda_t^i\cap \tilde \Lambda_t^j}\right]\le
\frac{C_6(\log\log t)^{4+{1}/{(2\alpha)}}}{\log^2 t},
\quad i=2,3,4,5,j=2,3,4,5.
\end{equation}
\end{lemma}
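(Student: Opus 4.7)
The plan is to follow the strategy of Lemma~\ref{l4-3}: obtain tail probability bounds for each event $\Lambda_t^i$ and $\tilde\Lambda_t^j$ individually, exploit the independence of $\{W(x)\}_{x\ge 0}$ and $\{W(x)\}_{x\le 0}$ to estimate the intersection probability, and then convert the probability bound into a bound on the heat kernel expectation via Lemma~\ref{small}. The pair $(i,j)=(2,2)$ is already covered by Lemma~\ref{l4-3}, so only the remaining pairs require attention. Since each $\Lambda_t^i$ depends exclusively on $W|_{[0,\infty)}$ and each $\tilde\Lambda_t^j$ on $W|_{(-\infty,0]}$, they are independent.

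The first step is to prove
$$\Pp(\Lambda_t^3)\lesssim\frac{1}{\log t},\qquad \Pp(\Lambda_t^4)\lesssim\frac{1}{\log^2 t},\qquad \Pp(\Lambda_t^5)\lesssim\frac{\log\log t}{\log t},$$
together with the analogous bounds for $\tilde\Lambda_t^i$. For $\Lambda_t^3$, Brownian scaling reduces $\Pp(H_{2\theta\log t}>\log^4 t)$ to $\Pp(H_1>c\log^2 t)$, and the classical asymptotics $\Pp(H_1>s)\sim\sqrt{2/(\pi s)}$ give the claim. For $\Lambda_t^5$, the gambler's ruin identity produces exactly $K_1\log\log t/(K_1\log\log t+\theta\log t)$. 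For $\Lambda_t^4$, I would cover $[0,\log^4 t]$ by $O(\log^4 t)$ overlapping unit intervals, bound the H\"older coefficient on each by the Fernique tail \eqref{ieqR} yielding $c\exp(-\lambda K^2)$ at level $K$, and then union-bound to obtain $\Pp(\Lambda_t^4)\le c(\log t)^{4-\lambda K_2^2}$. Choosing $K_2\ge K_2^\ast:=\sqrt{6/\lambda}$ then delivers the stated estimate.

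Next, for pairs $(i,j)$ with $i,j\in\{3,4,5\}$, independence yields $\Pp(\Lambda_t^i\cap\tilde\Lambda_t^j)\lesssim(\log\log t)^2/\log^2 t$, and Lemma~\ref{small} applied with $\theta_1=2$, $\theta_2=2$ produces the required bound. For the mixed pairs $(2,j)$ with $j\in\{3,4,5\}$ (and symmetrically $(i,2)$), a plain product-of-tails argument fails because $\Pp(\Lambda_t^2)$ does not tend to zero: by gambler's ruin the Brownian motion crosses $-(1-2\theta)\log t$ before $2\theta\log t$ with probability of order $2\theta$. Instead, I will reuse the decomposition $\Lambda_t^2\subset\Lambda_t^{21}\cup\Lambda_t^{22}\cup\Lambda_t^{23}$ from Lemma~\ref{l4-3}. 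On $\Lambda_t^{22}$ the pointwise bound $p^X(t,0,0,\om)\le c/\log^2 t$ from \eqref{l4-3-4} already suffices, while on $\Lambda_t^{21}\cup\Lambda_t^{23}$ the probability estimates \eqref{l4-3-5}--\eqref{l4-3-6} combined with independence give $\Pp\bigl((\Lambda_t^{21}\cup\Lambda_t^{23})\cap\tilde\Lambda_t^j\bigr)\lesssim(\log\log t)^3/\log^2 t$, so that Lemma~\ref{small} with $\theta_1=2$, $\theta_2=3$ completes the argument.

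I expect the principal obstacle to be the $(2,j)$ case, where $\Pp(\Lambda_t^2)$ has no decay and one must import the finer three-piece decomposition together with its pointwise heat-kernel bound from the proof of Lemma~\ref{l4-3}; the remaining cases then reduce to bookkeeping. A minor point to verify is that the product structure survives the various intermediate constraints (e.g.\ $\Lambda_t^{22}$ only involves $W|_{[0,\infty)}$, so it remains independent of each $\tilde\Lambda_t^j$), but this is straightforward.
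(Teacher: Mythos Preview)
Your proposal is correct and follows essentially the same route as the paper: bound $\Pp(\Lambda_t^i)$ for $i=3,4,5$ via the hitting-time tail, the Fernique/union-bound argument, and gambler's ruin respectively, use independence for $i,j\in\{3,4,5\}$ together with Lemma~\ref{small}, and for the $(2,j)$ pairs reuse the three-piece decomposition of $\Lambda_t^2$ from Lemma~\ref{l4-3} (pointwise bound on $\Lambda_t^{22}$, probability bounds on $\Lambda_t^{21},\Lambda_t^{23}$). The only cosmetic differences are that the paper is content with $\Pp(\Lambda_t^4)\le c/\log t$ (your choice of $K_2^*$ gives the sharper $1/\log^2 t$, which is harmless) and records the $(\Lambda_t^{2i}\cap\tilde\Lambda_t^j)$ probability as $(\log\log t)^4/\log^2 t$ rather than your $(\log\log t)^3/\log^2 t$.
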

\begin{proof}
(i) According to \cite[(2.0.2), p.\ 204]{BS},  there is $T_4>0$ so that
for every $t\ge T_4$,
\begin{align*}
\Pp(\Lambda_t^3)=\Pp(\tilde \Lambda_t^3)&=
\Pp(H_{2\theta\log t}> \log^4 t)\\
&=\frac{2\theta\log t}{\sqrt{2\pi}}\int_{\log^4 t}^\infty
\frac1{y^{3/2}}\exp\left(-\frac{4\theta^2\log^2t}{2y}\right)\,dy\\
&=\frac{2\theta}{\sqrt{2\pi}}\int_{\log^2 t}^\infty\frac1{y^{3/2}}\exp\left(-\frac{2\theta^2}{y}\right)\,dy\leq \frac{c_1}{\log t}.
\end{align*}
By \cite[(2.2.2), p.\ 204]{BS}, for every $t\ge T_4$
(by choosing $T_4$ large if necessary),
\begin{align*}
\Pp (\Lambda_t^{5} )=
\Pp (\tilde \Lambda_t^{5} )&=
\Pp\left(\inf_{0\le z\le H_{-K_1\log\log t}}(-W(z))\le -\theta\log t\right)\\
&=\frac{K_1\log\log t}{K_1\log\log t+\theta\log t}\le \frac{c_2\log\log t}{\log t}.
\end{align*}
Meanwhile, according to the argument of \eqref{ieqR}, we know that for every $n\ge 0$ and $R\ge 1$,
\begin{align*}
\Pp\left(\sup_{n\leq z_1,z_2\leq n+2:\,|z_1-z_2|\leq1}\frac{|W(z_1)-W(z_2)|}{|z_1-z_2|^\alpha}>R\right)
\le c_3e^{-c_4R^2}.
\end{align*}
Hence we can find a $K_2^*\ge 1$ so that for every $K_2\ge K_2^*$, it holds that
\begin{equation}\label{l4-4-2}
\begin{split}
\Pp (\Lambda_t^{4} )=
\Pp (\tilde \Lambda_t^{4} )&=
\Pp (\xi_{+,\log^4t}> K_2\sqrt{\log\log t} )
\\& \leq \sum_{n=0}^{\lceil\log^4t\rceil+1}\Pp\left(\sup_{n\leq z_1,z_2\leq n+2: |z_1-z_2|\leq1}\frac{|W(z_1)-W(z_2)|}{|z_1-z_2|^\alpha}> K_2\sqrt{\log\log t}\right)
\\&\leq c_5\log^4t\exp (-c_4K_2^2\log\log t)
\le \frac{c_5}{\log^{c_4K_2^2-4} t}
\leq\frac{c_6}{\log t}.
\end{split}
\end{equation}
Putting all the estimates above together and using the fact that $\Lambda_t^i$ and
$\tilde \Lambda_t^j$, $i=3,4,5$ and $j=3,4,5$, are independent with each other, we derive
$$
\Pp (\Lambda_t^{i}\cap \tilde \Lambda_t^{j})=
\Pp (\Lambda_t^{i})\cdot\Pp (\tilde \Lambda_t^{j})
\le \frac{c_7(\log \log t)^2}{\log^2 t},\quad i=3,4,5,\ j=3,4,5.
$$
Therefore, combining this with Lemma \ref{small}, we get
\eqref{l4-4-1} for every $i=3,4,5$ and $j=3,4,5$.

(ii) According to the estimates above and \eqref{l4-3-1}, it suffices to show that
\eqref{l4-4-1} for every $i=2$ and $j=3,4,5$, and for every $i=3,4,5$ and $j=2$. Here, we only prove
the case that $i=2$ and $j=3,4,5$, and the other case can be shown by exactly the same way.

As in Lemma \ref{l4-3}, we decompose $\Lambda_t^2$ as $\Lambda_t^2=\cup_{i=1}^3\Lambda_t^{2i}$. By
\eqref{l4-3-4}, \eqref{l4-3-5} and \eqref{l4-3-6} as well as
the independence of $\Lambda_t^{2i}$ and $\tilde \Lambda_t^j$,
\begin{align*}
\Ee\left[p^X(t,0,0)\I_{\Lambda_t^{22}}\right]\le \frac{c_8}{\log^2 t}
\end{align*} and
\begin{align*}
\Pp (\Lambda_t^{2i}\cap \tilde \Lambda_t^j )&
=\Pp (\Lambda_t^{2i} )\cdot\Pp (\tilde \Lambda_t^j )\le
\frac{c_8(\log\log t)^4}{\log^2 t},\quad i=1,3,\ j=3,4,5.
\end{align*}
This along with \eqref{l4-1-0a} yields that for every $t\ge T_4$
\begin{align*}
\Ee\left[p^X(t,0,0)\I_{\Lambda_t^{2i}\cap \tilde  \Lambda_t^j}\right]\le
\frac{c_9(\log\log t)^{4+{1}/{(2\alpha)}}}{\log^2 t},\quad i=1,3,\ j=3,4,5.
\end{align*}
Hence, combining with all the estimates above yields that
\begin{align*}
\Ee\left[p^X(t,0,0)\I_{\Lambda_t^2\cap \tilde \Lambda_t^j}\right]&
\le \Ee\left[p^X(t,0,0)\I_{\Lambda_t^{22}}\right]+\sum_{i=1,3}\Ee\left[p^X(t,0,0)\I_{\Lambda_t^{2i}\cap \tilde  \Lambda_t^j}\right]\\
&\le \frac{c_{10}(\log\log t)^{4+1/{(2\alpha)}}}{\log^2 t},\quad \ j=3,4,5.
\end{align*}
The proof is complete.
\end{proof}

With all the estimates above, we are in a position to present the

\begin{proof}[Proof of Proposition $\ref{T:UP}$]
Note that $\Omega=\cup_{i=1}^5 \Lambda_t^i=\cup_{i=1}^5 \tilde \Lambda_t^i$. By
\eqref{l4-2-1} and \eqref{l4-4-1}, we can choose suitable positive constants $K_1,K_2$
in the definition of $\Lambda_t^i$, $\tilde \Lambda_t^i$, $1\le i \le 5$, such that for every $t\ge T_1$ with $T_1:=\max\{T_2,T_3,T_4\}$,
\begin{align*}
\Ee\left[p(t,0,0)\right]&=\Ee\left[p(t,0,0)
\I_{\left(\cup_{i=1}^5\Lambda_t^i\right)\cap \left(\cup_{j=1}^5 \tilde \Lambda_t^j\right)}\right]\\
&\le 2\Ee\left[p(t,0,0)\I_{\Lambda_t^1}\right]
+2\Ee\left[p(t,0,0)\I_{\tilde \Lambda_t^1}\right]+
\sum_{i=2}^5\sum_{j=2}^5\Ee\left[p(t,0,0)\I_{\Lambda_t^i \cap \tilde \Lambda_t^j}\right]\\
&\le \frac{c_{1}(\log\log t)^{4+1/{(2\alpha)}}}{\log^2 t},
\end{align*}
so the proof is finished.
\end{proof}

\subsection{Lower bound}
In this subsection, we prove the following proposition.
\begin{proposition}\label{thm:lower} There are constants $C_1, T_1>0$ so that for all $t\ge T_1$,
$$\Ee[p(t,0,0)]\geq\frac{C_1}{(\log t)^2(\log\log t)^{11}}.$$
\end{proposition}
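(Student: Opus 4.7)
I will prove the lower bound by combining Lemma~\ref{lower} with a probabilistic decomposition dual to the one used for the upper bound. Since $W(0,\om)=0$ we have $p(t,0,0,\om)=p^Y(t,0,0,\om)$, and since $t\mapsto p^Y(t,0,0,\om)$ is non-increasing, Lemma~\ref{lower} will give, for any $R>0$ with $2C_{2}RV(R,\om)\ge t$,
\[
p(t,0,0,\om)\;\ge\;\frac{C_{2}^{2}\,V(R,\om)^{2}}{4C_{1}^{2}\,V(2R,\om)^{3}}.
\]
Thus it suffices to exhibit an event $\Omega_{t}^{\ast}\subset\Omega$ with $\Pp(\Omega_{t}^{\ast})\ge c(\log\log t)^{-a}$ and, on $\Omega_{t}^{\ast}$, a radius $R=R(t,\om)$ satisfying $RV(R)\ge t/(2C_{2})$, $V(2R)\le CV(R)$, and $V(R)\le C(\log t)^{2}(\log\log t)^{11-a}$ for some $a\ge 0$. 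Taking expectation of the displayed inequality against $\I_{\Omega_{t}^{\ast}}$ will then furnish the claimed bound.

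\textbf{Construction of $\Omega_{t}^{\ast}$.} The guiding picture is the opposite of the crucial event $\Lambda_{t}^{22}\cap\tilde\Lambda_{t}^{22}$ used in the upper bound: I want the valley of $W$ around the origin to be \emph{shallow} (so $V(R)$ is small) yet flanked by high walls (so the associated $R$ is macroscopic). Fixing $\theta\in(0,1/2)$ and constants $K_{1},K_{2},K_{3},K_{4}$ to be tuned, I will take $\Omega_{t}^{\ast}$ to be the intersection, over the positive half-line and the analogous symmetric conditions on the negative half-line (using $\tilde H_{z}$ and $\xi_{-,R}$ in place of $H_{z}$ and $\xi_{+,R}$), of
\begin{itemize}
\item[(a)] $H_{(1+\theta)\log t}(\om)<H_{-K_{1}\log\log t}(\om)$;
\item[(b)] $c_{0}(\log t)^{2}\le H_{(1+\theta)\log t}(\om)\le K_{3}(\log t)^{2}(\log\log t)^{K_{4}}$;
\item[(c)] $\xi_{+,\,2H_{(1+\theta)\log t}(\om)}(\om)\le K_{2}\sqrt{\log\log t}$.
\end{itemize}
The probability of (a) will follow from the reflection-principle formula used in \eqref{l4-3-6}, giving $K_{1}\log\log t/[K_{1}\log\log t+(1+\theta)\log t]\asymp\log\log t/\log t$; (b) will have constant probability by the L\'evy distribution of $H_{a}$ (as in the $\Pp(\Lambda_{t}^{3})$ estimate); (c) will have probability $1-o(1)$ by Fernique's estimate \eqref{ieqR}, exactly as in \eqref{l4-4-2}. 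Independence of the two half-lines will compound (a) to $(\log\log t/\log t)^{2}$, and the remaining constraints will add at most polylogarithmic losses.

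\textbf{Volume estimates and main obstacle.} On $\Omega_{t}^{\ast}$, Lemma~\ref{l3-1} combined with (c) will bound $V(R,\om)$ and $V(2R,\om)$ by $2Re^{-2W(0)}e^{\pm C\sqrt{\log\log t}\,(RV)^{\alpha/2}}$, while (a)--(b) will bound $-W$ on the underlying interval by $K_{1}\log\log t$ and the length of that interval in $x$-scale by $K_{3}(\log t)^{2}(\log\log t)^{K_{4}}$. Self-consistently solving $2C_{2}RV(R)\asymp t$ (as in the proof of Lemma~\ref{upper}) will then produce the desired $R$ with both $V(R)$ and $V(2R)$ of order $(\log t)^{2}(\log\log t)^{O(1)}$; the doubling $V(2R)\le CV(R)$ is guaranteed because (c) is imposed on the enlarged range $2H_{(1+\theta)\log t}$ containing the interval underlying $V(2R)$. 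The hard part will be the delicate polylogarithmic accounting that pins down the exponent $11$: the $V(R)^{2}/V(2R)^{3}$ form of Lemma~\ref{lower} punishes any failure of doubling cubically, so the H\"older bound in (c) and the heights in (a)--(b) must be balanced simultaneously, trading the $(\log\log t)^{-a}$ cost of $\Pp(\Omega_{t}^{\ast})$ against the $(\log\log t)^{11-a}$ inflation of $V(R)$, in the spirit of the valley decomposition of Lemma~\ref{l4-3}.
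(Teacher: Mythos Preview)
Your proposal has a genuine gap in the volume accounting that breaks the argument.

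\textbf{The core miscount.} On your event (a), the valley depth is $K_{1}\log\log t$, so on $[0,H_{(1+\theta)\log t}]$ one only has $W\ge -K_{1}\log\log t$, hence $e^{-W(z)}\le (\log t)^{K_{1}}$ (not $(\log\log t)^{K_{1}}$). Combined with (b) this yields at best
\[
V_{+}(R)\le (\log t)^{K_{1}}\cdot H_{(1+\theta)\log t}\le K_{3}(\log t)^{K_{1}+2}(\log\log t)^{K_{4}},
\]
not the $(\log t)^{2}(\log\log t)^{O(1)}$ you claim. Since Lemma~\ref{lower} gives $p\ge cV(R)^{2}/V(2R)^{3}\ge c/V(2R)$ under doubling, you obtain on $\Omega_{t}^{\ast}$ only $p(t,0,0)\ge c(\log t)^{-(K_{1}+2)}(\log\log t)^{-O(1)}$. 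Multiplying by $\Pp(\Omega_{t}^{\ast})\asymp(\log\log t)^{2}(\log t)^{-2}$ leaves $\Ee[p(t,0,0)]\gtrsim (\log t)^{-(K_{1}+4)}$, which is strictly weaker than $(\log t)^{-2}$ for every $K_{1}>0$. This also explains the internal inconsistency between your ``Plan'' (probability $\ge c(\log\log t)^{-a}$) and your ``Construction'' (probability $\asymp(\log\log t)^{2}(\log t)^{-2}$): the $(\log t)^{-2}$ has to come either from the event or from the volume, but it cannot come from both, and your volume bound does not absorb it.

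\textbf{Lemma~\ref{l3-1} is the wrong tool here.} You propose to control $V(R)$ and $V(2R)$ via Lemma~\ref{l3-1}. That lemma bounds $V(S(x),R)$ by $2Re^{\pm2\xi(x,(2RV)^{1/2})}$, where $\xi$ is the \emph{total} oscillation of $W$ on an interval of length $\asymp (RV)^{1/2}$. In the large-time regime $RV\asymp t$, so $(RV)^{1/2}\asymp t^{1/2}\gg (\log t)^{2}$, and on the relevant interval the oscillation of $W$ is of order $\log t$, not $\sqrt{\log\log t}$. Your H\"older control (c) only governs increments of length $\le 1$; it does not bound $\xi(0,(2RV)^{1/2})$. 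In short, Lemma~\ref{l3-1} is tailored to the small-time proof of Section~\ref{section3} and is vacuous here.

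\textbf{What the paper does instead.} The paper takes the valley depth to be a \emph{constant} (the event $\Gamma_{t}=\{H_{2\log t}<H_{-1}\}$), which has the smaller probability $\asymp(\log t)^{-1}$ per half-line but forces $e^{-W}\le e$ on the whole relevant stretch. It then adds occupation-time controls ($\Theta_{t}^{3}$: time spent with $W\in[-1,2\log\log t]$ is $\le K_{3}(\log\log t)^{3}$; $\Theta_{t}^{4}$: time with $W\in[-1,0]$ before $H_{\log t/2}$ is $\ge (K_{4}\log\log t)^{-1}$) and estimates the volume integrals $\int e^{\pm W}\,dz$ directly, splitting according to the level sets of $W$. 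This produces $V(R)$ and $V(2R)$ of order $(\log\log t)^{\pm O(1)}$, hence $p(t,0,0)\ge c(\log\log t)^{-11}$ on an event of probability $\ge (4\log^{2}t)^{-1}$. Your construction lacks any analogue of these occupation-time constraints, and without them neither the upper bound on $V(2R)$ nor the lower bound on $V(R)$ (needed for doubling) can be obtained.
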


Firstly, we define the following subsets of $\Omega$,
\begin{align*}
\Gamma_t =&\{\om\in \Omega:
H_{-1,2\log t}(\om)< H_{-1}(\om)\}=\{\om\in \Omega:
H_{-1,2\log t}(\om)= H_{2\log t}(\om)\},\\
 \Theta_t^1=&\{\om\in \Omega: H_{-1}(\om)\leq K_1\log^2t\}, \\
 \Theta_t^2=&\{\om\in\Omega: \xi_{+,K_1\log^2t}(\om)\leq K_2\sqrt{\log \log t}\},\\
\Theta_t^3=&\left\{\om\in \Omega:\int_0^{H_{-1,2\log t}(\om)}
\I_{[-1,2 \log\log t]}\left(W(z,\om)\right)\,dz \le K_3(\log\log t)^3\right\},\end{align*}
 and
$$\Theta_t^4=\left\{\om\in \Omega: \int_0^{H_{{\log t}/{2}}(\om)}
\I_{[-1,0]}\left(W(z,\om)\right)\,dz\ge \frac{1}{K_4\log\log t}\right\},$$
where the positive constants $K_i$, $1\le i\le 4$ will be fixed later.
Similarly, we can define $\tilde \Gamma_t$ and $\tilde \Theta_t^i$, $1\le i\le 4,$ by using $\tilde{W}(z,\om)=W(-z,\om)$ in place of $W(z,\om)$.
Finally, set
$$\Omega_t:=\Gamma_t\cap \left(\cap_{i=1}^4 \Theta_t^i\right)\cap \tilde \Gamma_t\cap (\cap_{i=1}^5 \tilde \Theta_t^i).$$

\bl There are $K_1,K_2,K_3,K_4$ large enough and $T_2>0$ so that
\begin{equation}\label{l4-5-1}
\Pp(\Omega_t)\ge  \frac{1}{4\log^2 t},\quad t\ge T_2.
\end{equation}
\el
\begin{proof} Throughout the proof, we always assume that $t\ge T_2$ for some $T_2$ large enough.
By \cite[(3.0.4)(b), p.\ 218]{BS}, we know that
$$\Pp(\Gamma_t)= \frac{1}{1+2\log t}.$$

According to \cite[(2.0.2), p.\ 204]{BS}, we can take $K_1$ large enough so that
\begin{align*}
\Pp\left((\Theta_t^1)^c\right)&=
\Pp\left(H_{-1}>K_1\log^2 t\right)\\
&=\frac{1}{\sqrt{2\pi}}\int_{K_1\log ^2 t}^\infty \frac{1}{s^{3/2}}\exp\left(-\frac{1}{2s}\right)\,d s\le \frac{ c_1}{ K_1^{1/2}\log  t}\le \frac{1}{8\log t}.
\end{align*}

Meanwhile, following the argument of \eqref{l4-4-2} and taking $K_2$ large enough, we have
\begin{align*}
\Pp\left((\Theta_t^2)^c\right)\le c_2(K_1\log ^2 t)\exp\left(-c_3 K_2^2\log\log t\right)\le
\frac{1}{8\log t},\quad t\ge T_2.
\end{align*}

To consider $\Theta_t^3$, we recall from \cite[(3.5.5)(b), p.\ 224]{BS} that, for any $\lambda>0$,
\begin{align*}
&\Ee\left[\exp\left(-\lambda\int_0^{H_{-1,2\log t}} \I_{[-1,2\log\log t]}(W(z))\,dz\right);
W\left(H_{-1,2\log t}\right)=2\log t\right]\\
&=\frac{\textrm{sh}(\sqrt{2\lambda})}
{\textrm{sh}(\sqrt{2\lambda}(1+2\,\log\log t))+\sqrt{2\lambda}(2\log t-2\, \log\log t)\textrm{ ch}(\sqrt{2\lambda}(1+2\, \log\log t))}\\
&=:Q(\lambda),
\end{align*} where
$$\textrm{sh}(x)=\frac{1}{2}(e^x-e^{-x}),\quad \textrm{ch}(x)=\frac{1}{2}(e^x+e^{-x}).$$
Thus, thanks to \cite[(3.0.4)(b), p.\ 218]{BS} again, we find that
\begin{align*}
&\Ee\left[ \int_0^{H_{-1,2\log t}} \I_{[-1,2\, \log\log t]}(W(z))\,dz; W\left(H_{-1,2\log t}\right)=2\log t\right]\\
&=\lim_{\lambda\downarrow 0} \frac{\Pp\left(W\left(H_{-1,2\log t}\right)=2\log t\right)-Q(\lambda)}{\lambda}\\
&=\lim_{\lambda\downarrow 0} \frac{(1+2\log t)^{-1}-Q(\lambda)}{\lambda}\le\frac{ c_{4}(\log \log t)^3}{\log t},
\end{align*}
which yields that
\begin{align*}
\Pp\left(\Gamma_t\cap (\Theta_t^3)^c\right)&=\Pp\left(\int_0^{H_{-1,2\log t}} \I_{[-1,2\, \log\log t]}(W(z))\,dz>K_3(\log\log t)^3;W\left(H_{-1,2\log t}\right)=2\log t \right)\\
&\le \frac{\Ee\left[ \displaystyle\int_0^{H_{-1,2\log t}} \I_{[-1,2\, \log\log t]}(W(z))\,dz; W\left(H_{-1,2\log t}\right)=2\log t\right]}{K_3(\log\log t)^3}\\
&\le \frac{c_{5}}{K_3\log t}.
\end{align*}
In particular, by choosing $K_3$ large enough, we arrive at
$$\Pp\left(\Gamma_t\cap (\Theta_t^3)^c\right)\le \frac{1}{8\log t},\quad t\ge T_2.$$

Furthermore, let $\theta\in (0,1/2)$ be fixed later. It holds that
\begin{align*}
\Pp_0\left((\Theta_t^4)^c\right)
&\le \Pp \left(\int_0^{H_{\frac{\log t}{2}}}\I_{(-\infty,0)}(W(z))\,dz\le \frac{1}{K_4\log\log t}\right)\\
&\le \Pp \left(\int_0^{\frac{\theta\log^2 t}{\log\log t}}\I_{(-\infty,0)}(W(z))\,dz\le \frac{1}{K_4\log\log t}\right)+
\Pp\left(H_{\frac{\log t}{2}}\le \frac{\theta\log^2 t}{\log\log t}\right).
\end{align*}
Applying  \cite[(2.0.2), Page\ 204]{BS} again, we derive
\begin{align*}
\Pp\left(H_{\frac{\log t}{2}}\le \frac{\theta\log^2 t}{\log\log t}\right)&=\frac{\log t}{2\sqrt{2\pi}}\int_0^{\frac{\theta\log^2 t}{\log\log t}}\frac{1}{s^{3/2}}
\exp\left(-\frac{\log^2 t}{4s}\right)ds\\
&=\frac{1}{2\sqrt{2\pi}}\int_0^{\frac{\theta}{\log\log t}}\frac{1}{s^{3/2}}
\exp\left(-\frac{1}{4s}\right)ds\\
&\le \frac{c_6}{\log^{\frac{1}{8\theta}}t},\quad t\ge T_2.
\end{align*}
Hence, choosing $\theta\in (0,1/2)$ small enough, we obtain
\begin{align*}
\Pp\left(H_{\frac{\log t}{2}}\le \frac{\theta\log^2 t}{\log\log t}\right)\le \frac{1}{16\log t},\quad t\ge T_2.
\end{align*}
On the other hand, it is well known that
$\int_0^1 \I_{(-\infty,0)}(W(z))\,dz$ satisfies the arcsin law, i.e., for any $a>0$,
$$\Pp\left(\int_0^1\I_{(-\infty,0)}(W(z))\,dz\le a\right)=\sqrt{\frac{2}{\pi}}{\arcsin \sqrt{a}}.$$
Note that, by the scaling invariance property of Brownian motion, for every $T>0$,
$\int_0^T \I_{(-\infty, 0)}(W(z))\,dz$ and $T\int_0^11_{(-\infty, 0)}(W(z))\,dz$ enjoy the same law.
Hence, for $\theta$ small enough fixed above, we can choose $K_4$ large enough so that
\begin{align*}
\Pp\left(\int_0^{\frac{\theta\log ^2 t}{\log\log t} }\I_{(-\infty,0)}(W(z))\,dz\le \frac{1}{K_4\log\log t}\right)=&
\Pp\left(\int_0^{1}\I_{(-\infty,0)}(W(z))\,dz\le \frac{1}{\theta K_4\log^2t}\right)\\
\le &\sqrt{\frac{2}{\pi}}\arcsin \sqrt{\frac{1}{\theta K_4\log ^2 t}} \le \frac{1}{16\log t}.
\end{align*}
Thus,  $$ \Pp\left((\Theta_t^4)^c\right)\le \frac{1}{8\log t},\quad t\ge T_2.$$

Putting all the estimates together, we arrive at
$$\Pp\left(\Gamma_t\cap (\cap_{i=1}^4\Theta_t^i)\right)\ge \Pp(\Gamma_t)-\sum_{i=1}^4 \Pp\left(\Gamma_t\cap (\Theta_t^i)^c\right)\ge
\frac{1}{2\log t},\quad t\ge T_2.$$ This, together with the independence of
$\Gamma_t\cap (\cap_{i=1}^5\Theta_t^i)$ and $\tilde \Gamma_t\cap (\cap_{i=1}^4\tilde \Theta_t^i)$ as well as the fact that both sets have the same probability, gives us that
$$\Pp(\Omega_t)=\Pp\left(\Gamma_t\cap (\cap_{i=1}^4\Theta_t^i)\right)
\cdot \Pp (\tilde \Gamma_t\cap (\cap_{i=1}^4\tilde \Theta_t^i) )\ge \frac{1}{4\log^2 t},\quad t\ge T_2.$$
The proof is finished.
\end{proof}

\bl There are  constants $C_2, T_3>0$ such that for all $t\ge T_3$ and $\om\in \Omega_t$,
\begin{align}\label{l4-6-1}
p(t,0,0,\om)\ge \frac{C_2}{(\log\log t)^{11}}.
\end{align}
\el
\begin{proof}
Again in the proof, we will choose $T_3>0$ large enough and consider $t\ge T_3$. For any $t>0$ and $\om\in \Omega$, define $R(t,\om)$ by the unique real number so that
(which is slightly different from that in \eqref{e4-1}),
$$t=C_2R(t,\om)V\left(R(t,\om),\om\right),$$ where
$C_2$ is the constant given in \eqref{l2-4-0}. Below, we will take positive constants
$\kappa_1$ and $\kappa_2$, whose exact values will be determined later, so that
for all $t\ge T_3$ and $\om\in \Omega_t$,
\begin{align*}
\int_0^{H_{\frac{\log t}{2}}(\om)} e^{W(z,\om)}\,dz\le& e^{\frac{\log t}{2}}H_{\frac{\log t}{2}}(\om)\le t^{{1}/{2}}H_{2\log t}(\om)\\
\le & t^{{1}/{2}}H_{-1}(\om)\le  t^{{1}/{2}}K_1\log ^2 t
\le \frac{\kappa_2 t}{(\log\log t)^3}
\le \kappa_1 t\log\log t.
\end{align*}
In particular, for all $t\ge T_3$ and $\om\in \Omega_t,$
\begin{align*}
\delta_+\left(\kappa_1t\log\log t,\om\right)\ge \delta_+\left(\frac{\kappa_2 t}{(\log\log t)^3},\om\right)\ge H_{\frac{\log t}{2}}(\om),
\end{align*}
and  (noting that $\om\in \Omega_t\subset \Theta_t^4$),
\begin{equation}\label{e:proof-11}
\begin{split}
V\left(\kappa_1t\log\log t,\om\right)&\ge V\left(\frac{\kappa_2 t}{(\log\log t)^3},\om\right)
\ge \int_0^{\delta_+\left(\frac{\kappa_2 t}{(\log\log t)^3},\om\right)} e^{-W(z,\om)}\,dz\\
&\ge \int_0^{H_{\frac{\log t}{2}}(\om)} e^{-W(z,\om)}\,dz
\ge \int_0^{H_{\frac{\log t}{2}}(\om)}\I_{[-1,0]}(W(z,\om))\,dz\\
&\ge \frac{1}{K_4\log\log t}.
\end{split}
\end{equation}
Therefore, choosing $\kappa_1\ge C_2^{-1}K_4$ (with $C_2$ being the constant in the definition of $R(t,\om)$), we have
$$\kappa_1t\log\log t\cdot V\left(\kappa_1t\log\log t,\om\right)\ge \frac{\kappa_1t}{K_4}\ge C_2^{-1}t=R(t,\om)V(R(t,\om),\om),$$
and so
\begin{equation}\label{e:low1}
R(t,\om)\le \kappa_1t\log\log t,\quad  t\ge T_3,\om \in \Omega_t.
\end{equation}

On the other hand, for every $t\ge T_3$ and $\om \in \Omega_t$
(by noting that $\om\in\Omega_t\subset \Theta_t^2$),
\begin{align*}
\int_0^{H_{2\log t}(\om)}e^{W(z,\om)}\,dz&\ge \int_{H_{2\log t}(\om)-1}^{H_{2\log t}(\om)}e^{W(z,\om)}\,dz
\ge e^{2\log t-K_2\sqrt{\log\log t}}\\
&\ge 2\kappa_1 t(\log\log t)\ge \frac{\kappa_2 t}{(\log\log t)^3}.
\end{align*}
Here we used the facts that
$
1<H_{2\log t}(\om)\le K_1\log^2t$ and $W(z,\om)\ge e^{2\log t-K_2\sqrt{\log\log t}}
$ for all $H_{2\log t}(\om)-1\le z\le H_{2\log t}(\om)$,
which can be verified as the same way as these for \eqref{l4-2-5} and \eqref{l4-2-3a} and by taking $T_3$ large enough if necessary.
Hence,
$$\delta_+\left(\frac{\kappa_2t}{(\log \log t)^3},\om\right)
\le \delta_+\left(2\kappa_1 t\log\log t,\om\right)\le H_{2\log t}(\om),\quad t\ge T_3,\ \om \in \Omega_t.$$
Thus, by  $\om\in \Omega_t\subset \Gamma_t \cap \Theta_t^3$, we find that (by taking $T_3$ large enough if necessary)
\begin{align*}
&\int_0^{H_{2\log t}(\om)}e^{-W(z,\om)}\,dz= \int_0^{H_{-1,2\log t}(\om)}e^{-W(z,\om)}\,dz\\
&\le e\int_0^{H_{-1,2\log t}(\om)}\I_{[-1,2\log\log t]}(W(z,\om))\,d z+e^{-2\log\log t}\int_0^{H_{-1,2\log t}(\om)}
\I_{[2\,\log\log t,2\log t]}(W(z,\om)) \,dz\\
&\le eK_3(\log\log t)^3+(\log t)^{-2\,} H_{-1}(\om)\le  eK_3(\log\log t)^3+K_1(\log t)^2(\log t)^{-2\,}\\
&\le 2e K_3 (\log\log t)^3.
\end{align*}
Hence,
$$\int^{\delta_+\left(2\kappa_1 t\log\log t,\om\right)}_0e^{-W(z,\om)}\,dz
\le \int_0^{H_{2\log t}(\om)}e^{-W(z,\om)}\,dz \le 2 eK_3(\log\log t)^3.$$
Similarly, it holds that
$$\int_{\delta_-\left(2\kappa_1t\log\log t,\om\right)}^0e^{-W(z,\om)}\,dz\le 2 eK_3(\log\log t)^3,\quad t\ge T_3,\ \om \in \Omega_t.$$
So,
\begin{equation}
\label{e:proof-12}
V\left(\frac{\kappa_2t}{(\log \log t)^3},\om\right)\le
V\left(2\kappa_1t\log\log t,\om\right)\le 4eK_3 (\log\log t)^3,\quad t\ge T_3,\ \om \in \Omega_t.
\end{equation}
Thus, by taking $\kappa_2$ small enough such that $4e\kappa_2K_3\le C_2^{-1}$, we derive
$$\frac{\kappa_2t}{(\log \log t)^3}V\left(\frac{\kappa_2t}{(\log \log t)^3},\om\right)\le 4e\kappa_2K_3t\le
C_2^{-1}t=R(t,\om)V(R(t,\om),\om),$$
which implies that
\begin{equation}\label{e:low2}
R(t,\om)\ge \frac{\kappa_2t}{(\log \log t)^3},\quad t\ge T_3,\ \om\in \Omega_t.
\end{equation}

Combining \eqref{e:low1} with \eqref{e:low2}, we have
$$ \frac{\kappa_2t}{(\log \log t)^3}\le R(t,\om)\le  \kappa_1t\log\log t,\quad t\ge T_3,\ \om\in \Omega_t.$$
This along with \eqref{l2-4-0} yields that
\begin{align*}
p^X(t,0,0,\om)&=p^Y(t,0,0,\om)=p^Y\left(C_2R(t,\om)V\left(R(t,\om),\om\right),0,0,\om\right)\\
&\ge\frac{C_2^2 V(R(t,\om),\om)^2}{4C_1^2V(2R(t,\om),\om)^3}
\ge \frac{C_2^2 V\left(\frac{\kappa_2t}{(\log \log t)^3},\om\right)^2}{4C_1^2V\left(2\kappa_1t\log\log t,\om\right)^3},
\quad t\ge T_3,\ \om\in \Omega_t.
\end{align*}
where $C_1$ and $C_2$ are the constants given in \eqref{l2-4-0}.

Recall that, according to \eqref{e:proof-11} and \eqref{e:proof-12},
for every $t\ge T_3$ and $\om \in \Omega_t$,
$$V\left(\frac{\kappa_2t}{(\log \log t)^3},\om\right)\ge \frac{c_1}{\log\log t},$$
and
$$V\left({2\kappa_1t}{(\log \log t)},\om\right)\le  c_2(\log\log t)^3.$$
Putting them into the inequality above proves the desired assertion \eqref{l4-6-1}.
\end{proof}

\begin{proof}[Proof of Proposition $\ref{thm:lower}$]
According to \eqref{l4-5-1} and \eqref{l4-6-1}, for every $t\ge T_1:=\max\{T_2,T_3\}$,
$$
\Ee\left[p(t,0,0)\right]\ge \Ee\left[p(t,0,0)\I_{\Omega_t}\right]\ge \frac{c_1}{(\log\log t)^{11}}\cdot \Pp\left(\Omega_t\right)
\ge \frac{c_1}{4(\log t)^2(\log\log t)^{11}}.
$$
So the proof is finished.
\end{proof}

\ \

Finally, the assertion of Theorem \ref{thm22} for the case that $x=0$ is a consequenece of Propositions \ref{T:UP} and \ref{thm:lower}, and one can follow the similar arguments to deal with general $x\in \R$.

\ \

\noindent {\bf Acknowledgements.}\,\,  The authors would like to thank Professors Rongchan Zhu and Xiangchan Zhu for
introducing this problem to us, as well as
helpful discussions on topics related to this work.
The research of Xin Chen is supported by the National Natural Science Foundation
of China (No. 12122111). The research of Jian Wang is supported by the National Key R\&D Program of China (2022YFA1006003) and the  National Natural Science Foundation of China (Nos. 12225104 and 12531007).

\end{document}